\newcommand{\ZZ}{\mathbb{Z}}
\newcommand{\mgf}{\phi}
\newcommand{\bigmgf}{\Phi}
\newcommand\Bbbbone{%
  \ifdefined\mathbbb%
    \mathbbb{1}%
  \else%
    \boldsymbol{\mathbb{1}}%
  \fi}
\newcommand{\indicator}[1]{\Bbbbone_{ #1 }}
\DeclarePairedDelimiter\set{ \{ }{ \} }
\newtheorem{proposition}{Proposition}
\newtheorem{lemma}{Lemma}
\newtheorem{theorem}{Theorem}
\newtheorem{corollary}{Corollary}
\newtheorem{remark}{Remark}
\newenvironment{hyp}[1]
	{\begin{equation} \tag{#1} \begin{minipage}{0.9\linewidth}}
	{\end{minipage} \end{equation} \ignorespacesafterend}
\begin{document}

\title{Rare events in a polling system:\\ Rays \& Spirals}


\author{Robert D. Foley \\
ISYE Georgia Institute of Technology\\
rfoley@isye.gatech.edu\\
\\
and\\
\\
David R. McDonald\\
Mathematics and Statistics University of Ottawa\\
david.r.mcdonald@gmail.com
}

\date{\today}    
\maketitle


\noindent AMS Subject Classification: 60J10 (primary), 60J50 (secondary).


\begin{abstract}
It's a situation everyone dreads.  A road is down to one lane for repairs. Traffic is let through one way until the backlog clears and then
traffic is let through the other way to clear that backlog and so on.  When stuck in a very long queue it is inevitable to wonder {\em how did I get into this mess}?

We  study a polling model with a server having exponential service time with mean $1/\mu$ alternating between two queues, emptying one queue before switching to the other. Customers arrive at queue one according to a Poisson process with rate $\lambda_1$ and at queue two with rate $\lambda_2$. We discuss how we get at a rare event with a large number of
customers in the system. In fact this can happen in two different ways depending on the parameters. In one case one queue simply explodes and
runs away without emptying. We call this the ray case. In the other spiral case the queues are successively emptied but in a losing battle as the system zigzags to the rare event.
This dichotomy extends to the steady state distribution and leads to quite different asymptotic behaviour in the two cases.
\end{abstract}



\section{Tandem polling model}

We are interested in the way rare events  occur and where possible in estimating the steady state probability of these rare events  for Markov chains of the type that typically arise in modelling polling networks.  The probabilities of such rare events are usually difficult to obtain---simulation can be very slow.
Consider two queues labeled 1 and 2.   A Poisson stream of jobs  arrives with intensity $\lambda_1$ to queue~1 where jobs are served in order of arrival.
A Poisson stream of jobs  arrives with intensity $\lambda_2$ to queue~2. Hence the customer arrival rate is $\lambda=\lambda_1+\lambda_2$.  A single server services both queues
with exponential service time having mean $1/\mu$.  The server remains at a queue serving jobs until there are no more jobs to serve before switching to the other queue. If there are jobs present, the server is working.
For convenience and without loss of generality, we assume that $\lambda + \mu= 1$.  To avoid degenerate situations, we assume that
$\lambda_1 > 0$,   $\lambda_2 > 0$ and $\mu > 0$.

There is a vast literature on polling systems. The earliest reference is \cite{Takacs}  and a good review is found in \cite{Takagi}).
The paper \cite{Boxma} considers our model with the additional twist that if queue~2 is being served but queue~1 reaches a threshhold then the server preemptively switches back to serving
queue~1. \cite{Borst} deals with our exhaustive service model as well as the threshhold model. Both papers are able to give the joint generating function of the steady state joint queue length distribution.
Both give results on mean queue sizes in steady state and other interesting relationships but don't say
much about large deviations of the queues.

The state of the system is denoted by $(x,y,s)$ where $(x,y)$ is the joint queue length of queues 1 and 2 and $s \in \{1,2\}$ is the queue being served.  When $(x,y) = (0,0)$, then $s = 1$.  Note that $(x,0,2)$ for $x\geq 0$ and $(0,y,1)$ for $y \geq 1$ are not in the state space.  Let $S$ denote the state space, and $Q$ the generator of, this Markov process $M$.  Under our assumptions, the Markov process is irreducible.  If the Markov process is positive recurrent, let $\pi$ denote the stationary distribution.
We may be interested in $\pi(x,y,1)$ or $\pi(x,y,2)$ where $x+y=\ell$ is a large integer and we are interested in the large deviation path of excursions from the origin to
points $(x,y,1)$ or $(x,y,2)$ such that $x+y=\ell$.  We shall see that depending on the parameters there are two distinct ways of temporarily overloading the queues. One way is for the  server to have an extraordinarily long busy period.  During this busy period  the server can't keep up with the arrivals to the queue it is serving so both queues get large together. This is the explosion or ray case.
  For other parameters the most likely path is a spiral
where the server alternates between emptying the two queues but in a losing battle.  Each time the server returns to a queue it is longer than before!
Such spirals have been observed to cause instability of multiclass reentrant networks even when the load on each server is less than one (see \cite{Bramson} and for an  review see \cite{Dai}.
The situations are of course different.  Our queues are stable but it is striking that the large deviations can occur because of the same spiral behaviour.

Large deviations of processes with boundaries have been studied extensively; see \cite{weiss} and \cite{Ellis-Dupuis}.
Large deviation results  for polling models  where the server has Markovian routing between queues are given in  \cite{DelcoigneI} and \cite{DelcoigneII}.
The local rate function is derived and a large deviation principle is established.  The technique can be used to estimate the (rough) stationary probability decay rate.
Large deviation results are also given in \cite{Asian} for polling systems where the server is routed between queues according to a Bernoulli service schedule.
This paper establishes a rate function and derives  upper and lower bounds on the probability that the
queue length of each queue exceeds a certain level (i.e., the buffer overflow probability).

A complementary (but far less general) approach using
harmonic functions was developed in \cite{PfH,FM,JSQ,FM:BridgesExact}. This technique enables one to obtain exact asymptotics of the stationary distribution of the queues.
In particular if one wishes calculate the steady state probability $\pi(F)$ for some rare event $F$ then one finds
 an $h$-transformation that converts
$M$ into a {\em twisted} chain $\mathcal{M}$ for which the rare event is no longer rare.
Equivalently one studies the (extreme) point on the Martin boundary of $M$ associated with the harmonic function $h$
whose twist drifts toward $F$.
Chang and Down \cite{Chang-Down-2007} used this approach to study polling models under limited service policies.
Our paper is complementary to theirs in that ours studies exhaustive service.

The  paper by  Ignatiouk-Robert \cite{Irina} explores the Martin boundary of random walks killed along a killing boundary.
Her paper has a
close connection to large deviation theory and to our techniques. Extending Ignatiouk-Robert's paper, \cite{Raschel} studies singular random walks like ours
but with absorption outside a cone. Positive harmonic functions are exhibited using the compensation approach \cite{AdanI,AdanII}.
Their cone of harmonic functions is much more complicated than our simple extension of the classical Ney and Spitzer results \cite{Ney-Spitzer}.
These papers and ours point to a circle of ideas (Martin boundary - large deviations - asymptotics of $\pi$)
with more open questions than answers.

One could quite rightly object that the service times of the cars released to drive on the one available lane are in no way exponential. The polling model studied is more appropriate for a server serving two queues of retail customer or computer jobs.  It is however the simplest model we know to illustrate the different ways such systems overload
so we will work it out in detail.
A more sensible model might be to take $\mu$ to be deterministic.  One might regard instead the state of the two queues $(x,y,1)$ or $(x,y,2)$
at service times. In this case there would be a random number of arrivals to each queue per service period with means $\lambda_1$ and $\lambda_2$ respectively.
To be even more realistic we include a switch over period when there are no services. This corresponds to the delay in getting traffic flowing in the reverse direction.
Indeed in \cite{Hofri} switch over times are included and  if one assigns holding costs it is shown that the exhaustive service policy is optimal for reducing long run discounted costs.
We will briefly study a more general model in Section \ref{trafficoverload}. The method and the main features are the same; i.e.
either the queue being served runs away to a large value or the server does succeed in successively emptying each queue only to find the other queue has built up to
a higher level than before.

\subsection{State space \& macro behavior:}
Let sheet $s$ be all states where the server is at queue $s$.  The projection of the two sheets onto the plane gives the joint queue length. When the server is at 2, the joint queue length is drifting southeasterly as can be seen from the possible customer flows and rates in the right sheet of Fig.~\ref{ratediagram}.  When the server is processing customers at 1, the  queue length process is drifting northwesterly  as can be seen in the left sheet of Fig.~\ref{ratediagram}.

   The two sheets look like  a coffee filter if the sides of the sheets' corresponding axes are glued together.
   When folded flat, the two creases line up along each axis, and the point of the filter is at the origin.  The left sheet in Fig.~\ref{ratediagram}, which includes the $x$-axis, corresponds to the server at queue~1; the right sheet in Fig.~\ref{ratediagram}, which includes the $y$-axis but not the origin, corresponds to the server at 2.
Whenever the Markov process crosses the crease going from one sheet to the other, the server  moves to the other queue.  Crossing a crease is like going through a turnstile since the process can only cross in one direction.  Along the $x$-axis, the server can move from queue~2 to queue~1.  Along the $y$-axis, the server can move from queue~1 to queue~2.

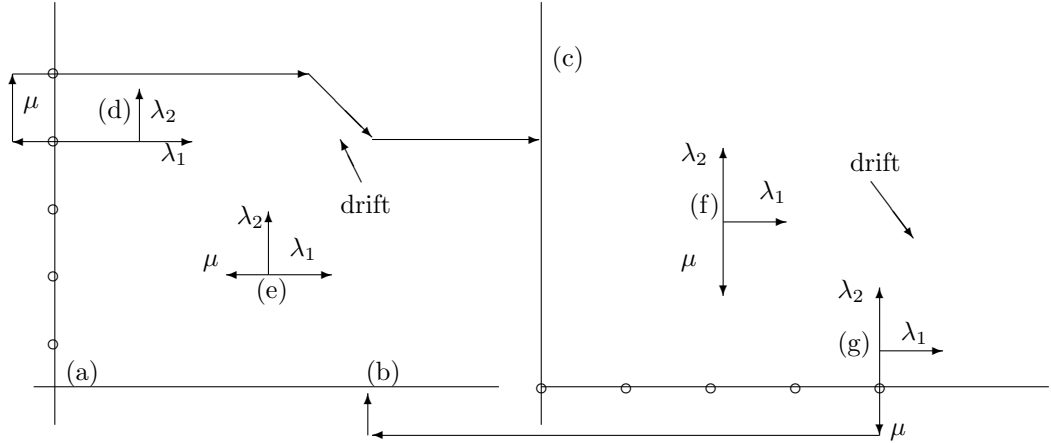
\begin{figure}
\hspace*{7cm} \setlength{\unitlength}{.8pt}
\begin{picture}(200,240)
\put(-240,3) {\line(1,0){220}}
\put(0,3) {\line(1,0){240}}
\put(-225,6){(a)}

\put(-230,-15) {\line(0,1){200}}
\put(0,-15) {\line(0,1){200}}

\multiput(-3,-1)(40,0){5}{$\circ$}
\multiput(-234,20)(0,32){5}{$\circ$}

\put(70,85){(f)}
 \put(86,81){\vector(1,0){30}}
 \put(103,92){$\lambda_1$}
 \put(86,81){\vector(0,-1){35}}
 \put(66,60){$\mu$}
\put(86,81){\vector(0,1){35}}
 \put(66,110){$\lambda_2$}

\put(-210,130){(d)}
\put(-250,119) {\vector(0,1){32}}
\put(-250,151) {\vector(1,0){140}}
\put(-110,151) {\vector(1,-1){30}}
\put(-80,120) {\vector(1,0){79}}

\put(-190,119) {\vector(-1,0){60}}
 \put(-245,135){$\mu$}
 \put(-180,110){$\lambda_1$}
\put(-190,119) {\vector(1,0){25}}
 \put(-185,130){$\lambda_2$}
\put(-190,119) {\vector(0,1){25}}

 \put(5,155){(c)}

\put(-135,45){(e)}
 \put(-129,56){\vector(1,0){30} }
 \put(-119,65){$\lambda_1$}
 \put(-129,56){\vector(-1,0){20} }
 \put(-160,60){$\mu$}
 \put(-129,56){\vector(0,1){30} }
 \put(-144,80){$\lambda_2$}

 \put(-95,85){\mbox{drift}}
  \put(-85,100){\vector(-1,2){10} }

  \put(146,105){\mbox{drift}}
  \put(156,100){\vector(3,-4){20} }

\put(-83,6){(b)}

\put(140,20){(g)}
\put(160,20){\vector(1,0){30}}
 \put(170,25){$\lambda_1$}
 \put(160,20){\vector(0,1){30}}
 \put(140,45){$\lambda_2$}
 \put(160,20){\vector(0,-1){40}}
\put(160,-20){\vector(-1,0){240}}
\put(-82,-20){\vector(0,1){20}}
 \put(165,-20){$\mu$}

\end{picture}
\setlength{\unitlength}{1pt} \caption{Jump rates when queue $1$ is served are on the left sheet and on the right sheet when queue $2$ is served. Open circles denote points {\em not} in the state space
} \label{ratediagram}
\end{figure}

\subsection{Stability  of the tandem polling model}

Each job brings an amount of work $1/\mu$.
Since the system is non-idling, in order that the load on the single server is less than its capacity we
assume that
\begin{eqnarray}\label{stability}
\lambda_1+\lambda_2 &<& \mu,
\end{eqnarray}
which is a necessary and sufficient conditions for stability and is equivalent to having a unique stationary distribution $\pi$.
In fact the total queue length $Z$ is an $M|M|1$ queue  with arrival rate $\lambda=\lambda_1+\lambda_2$ and service rate $\mu$.
This chain is stable if $\rho<1$ and
the steady state is $\pi(\ell)=(1-\rho)\rho^{\ell}$ for $\ell=0,1,\ldots$ where $\rho=\lambda/\mu$.
Clearly the total queue length is a stable chain if and only if $M$ is stable.
Moreover $\pi(0,0,1)=\pi(0)=1-\rho$ by definition (there is no state $(0,0,2)$).

\subsection{Uniformization}

It is convenient to study the Markov chains whose uniformization is $Z$ or $M$. Denote the associated
transition kernel by $K$ in both cases. The homogenization of the combined chain $Z$ has transition kernel $K$ on $S=\{0,1,2,\ldots\}$ where $K(u,u+1)=\lambda$ and
$K(u,u-1)=\mu$ for $u>0$ and $K(0,1)=\lambda$ and $K(0,0)=\mu$. $\pi(z)=(1-\rho)\rho^z$ is the associated stationary probability.
 The homogenization of $M$ has transition kernel $K$
\begin{equation} \label{K-defn}
{K}(u,v) =
\begin{cases}
\lambda_1 &\text{ if $u=(x,y,s)$ and $v=(x+1,y,s)$ }
\\
\lambda_2 &\text{ if $u=(x,y,s)$ and $v=(x,y+1,s)$ }
\\
\mu &\text{ if $u=(x,y,1)$, $x > 1$, $v=(x-1,y,1)$}
\\
\mu &\text{ if $u=(1,y,1)$,  $v=(0,y,2)$}
\\
\mu &\text{ if $u=(x,y,2)$, $y > 1$, $v=(x,y-1,2)$}
\\
\mu &\text{ if $u=(x,1,2)$,  $v=(x,0,1)$}
\\
\mu &\text{ if $u=(0,0,1)$ and $u = v$}
\\
0 &\text{ otherwise.}
\\
\end{cases}
\end{equation}
for all pairs of states $(u,v)$.


\subsection{Twisted $Z$}

Extend $K$ to $\ZZ$  giving the   free kernel $\overline{K}$:
 $\overline{K}(u,u+1)=\lambda$ and
$\overline{K}(u,u-1)=\mu$ for all $u$.
 Let
$G=\sum_{n=0}^{\infty}\overline{K}^d$  be the associated potential. Define $\blacktriangle$ to be $\{z:z\leq 0\}$ in $\ZZ$ and $\Delta=S\setminus\blacktriangle=\{0\}$. Let $\overline{K}_{\blacktriangle}$ be the taboo kernel $\overline{K}$ killed on $\blacktriangle$ and let
 $G_{\blacktriangle}$ be the associated potential.

$h(u)=\rho^{-u}$ is harmonic for $\overline{K}$. The associated $h$-transformed kernel is
$\mathcal{K}(u,v)=\overline{K}(u,v)h(v)/h(u)$; i.e.  $\mathcal{K}(u,u+1)=\lambda$ and
$\mathcal{K}(u,u-1)=\mu$ for all $u$.
 Let
$$\mathcal{G}(u,v)=\sum_{n=0}^{\infty}\mathcal{K}^n(u,v)=G(u,v)\frac{h(v)}{h(u)}$$  be the associated potential.
 Let $\mathcal{K}_{\blacktriangle}$ be the taboo kernel $\mathcal{K}$ killed on $\blacktriangle$ and let
 $\mathcal{G}_{\blacktriangle}$ be the associated taboo potential.

We now use the representation
\begin{eqnarray}
\pi(z)&=&\sum_{u\in \Delta}\pi(u) G_{\blacktriangle}(u;z))
=\sum_{u\in \Delta}\frac{h(u)}{h(z)}\pi(u) \mathcal{G}_{\blacktriangle}(u;z))\nonumber\\
 &=&(1-\rho)\rho^{-z} \mathcal{G}_{\blacktriangle}(0;z)).
 \end{eqnarray}
 This just means ${\cal G}_{\blacktriangle}(0;z)=1$.
 The probability $\mathcal{Z}$, starting at $x>0$, escapes to infinity without returning to $0$ is
 $1-\rho^{x}$ so the probability of escaping from $0$ is $\mu(1-\rho^{1})=\mu-\lambda$. Taking $z=\ell$ this means
 means the expected number of visits to $\ell$ is $1/(\mu-\lambda)$ given the twisted chain does escape from $0$.

 \subsection{Twisted $M$}
For the uniformized {\em free} kernels $\overline{K}_1$ and $\overline{K}_2$
define the transforms $\bigmgf_s(\alpha,\beta)=\sum_{x,y}\overline{K}_s((0,0),(x,y)\alpha^x\beta^y$ for $s=1,2$.
Using convexity we see the two "eggs", $\bigmgf_1(\alpha,\beta)=1$ and $\bigmgf_2(\alpha,\beta)=1$, have two points of intersection;
i.e. $(1,1)$ and $(\rho^{-1},\rho^{-1})$.
Hence
$h(x,y,s)=\rho^{-(x+y)}$ is harmonic for $K$ away from $(0,0,1)$.
We may therefore calculate the $h$-transform. The transition kernel
of the resulting {\em twisted} chain $\mathcal{M}$ is:

\begin{eqnarray}\label{free}
\mathcal{K}(a,b) &=
\begin{cases}
\tilde{\lambda}_1=\frac{\lambda_1}{\lambda}\mu &\text{ if $a=(x,y,s)$ and $b=(x+1,y,s)$ } \\
\tilde{\lambda}_2=\frac{\lambda_2}{\lambda}\mu&\text{ if $a=(x,y,s)$ and $b=(x,y+1,s)$ } \\
\tilde{\mu}=\lambda &\text{ if $a=(x,y,1)$, $x > 1$, $b=(x-1,y,1)$} \\
\tilde{\mu}=\lambda&\text{ if $a=(1,y,1)$,  $b=(0,y,2)$} \\
\tilde{\mu}=\lambda&\text{ if $a=(x,y,2)$, $y > 1$, $b=(x,y-1,2)$} \\
\tilde{\mu}=\lambda &\text{ if $a=(x,1,2)$,  $b=(x,0,1)$} \\
\mu &\text{ if $u=(0,0,1)$ and $u = v$} \\
0 &\text{ otherwise.}
\end{cases}
\end{eqnarray}
Note that $\mathcal{K}$ is super-stochastic at $(0,0,1)$.

We note the above could be generalized. Suppose  the service rate is different for the two queues; i.e. $\mu_1$ on sheet~1 and $\mu_2$ on sheet $2$.
The points of intersection of the two "eggs": $\lambda_1\alpha+\lambda_2\beta+\mu_1 \alpha^{-1}=1$ and
 and $\lambda_1\alpha+\lambda_2\beta+\mu_2 \beta^{-1}=1$ gives a point $(\gamma_1,\gamma_2)$
 where
 $$\gamma_2=\frac{\mu_2}{\mu_1}\gamma_1\mbox{ and } \gamma_1=\frac{1-\sqrt{1-4(\lambda_1\mu_1+\lambda_2\mu_2)}}{\frac{2}{\mu_1}(\lambda_1\mu_1+\lambda_2\mu_2)}.$$
  Hence the $h(x,y)=\gamma_1^x \gamma_2^y$ is harmonic on both sheets away from $(0,0,1)$. We may use this harmonic function to obtain a transient twisted chain whose
  path gives the large deviation path of the original chain as above.
 Again the $h$-transformed chain can be a ray or a spiral.
 Then, by the above arguments, the large deviation paths are rays as discussed in Section \ref{rayraylab} or spirals discussed in Section \ref{zigzag}.

\subsection{Path of the rare event of hitting a high level}\label{pathrare}
We will now describe how $M$ hits the level $\ell$ at some point $(x,y,s)$; i.e. when $x+y=\ell$.
 Let $F=\{(x,y,s):x+y\geq\ell\}$ and $\alpha=\{(0,0,1)\}$.
 Let $\tau_F=\min\{n\geq 0 :M(n)\in F\}$ and $\tau_{\alpha}=\min\{n\geq 1 :M(n)=\alpha\}$
 Let $h_{\alpha}(x,y,s)=P_{(x,y,s)}[\tau_F<\tau_{\alpha}]$ with $h_{\alpha}(0,0,1)=0$ and $h_{\alpha}(p)=1$ if $p\in F$ where $p=(x,y,s)$.   $h_{\alpha}$ is harmonic
except on $F$.
Simple calculation shows $(\rho^{-(x+y)}-1)/(\rho^{-\ell}-1)$ also satisfies the same conditions and since two harmonic functions agreeing on their boundaries are equal
we know $h_{\alpha}(x,y,s)=(\rho^{-(x+y)}-1)/(\rho^{-\ell}-1)$.

As above define the twisted Markov chain $\mathcal{M}_{\alpha}$ with twisted kernel $$\mathcal{K}_{\alpha}(p,q)=K(p,q)h_{\alpha}(q)/h_{\alpha}(p)$$
for $p\notin \{\alpha\}\cup F$. Define $$\mathcal{K}_{\alpha}(\alpha,p)=K(\alpha,p)h_{\alpha}(p)/\sum_{q\neq (0,0,1)}K(\alpha,q)h_{\alpha}(q).$$ Let $\mathcal{P}^{h_{\alpha}}$
denote the probability measure constructed from $\mathcal{K}_{\alpha}$.
Since $h_{\alpha}(\alpha)=0$ the twisted chain cannot return to $\alpha$.  If follows that $\mathcal{M}_{\alpha}$ hits $F$ before $\alpha$ with probability one.


Pick a path $(a_0,a_1,\ldots a_{T-1},a_{T})$ starting at $a_0=\alpha=(0,0,1)$ that enters $F$  for the first
time at $a_{T}$ before returning to $\alpha$. We see
\begin{eqnarray}
\lefteqn{P_{\alpha}[M(n)=a_n, 0\leq n\leq T|\tau_F<\tau_{\alpha}]}\label{good}\\
&=&P_{\alpha}[M(n)=a_n, 0\leq n\leq T]/P_{\alpha}[\tau_F<\tau_{\alpha}]\nonumber\\
&=&\mathcal{P}^{h_{\alpha}}_{\alpha}[\mathcal{M}(n)=a_n, 0\leq n\leq T]h_{\alpha}^{-1}(a_{T})/\mathcal{E}_{\alpha}^{h_{\alpha}}[\chi\{\tau_F<\tau_{\alpha}\}h_{\alpha}^{-1}(M(\tau_F))]\nonumber\\
&=&\mathcal{P}^{h_{\alpha}}_{\alpha}[\mathcal{M}(n)=a_n, 0\leq n\leq T]\label{gooder}
\end{eqnarray}
since $h_{\alpha}$ equals $1$ on $F$ and since $\mathcal{M}$ can't hit $\alpha$
but must hit $F$ under measure $\mathcal{P}^{h_{\alpha}}$.

In other words the  probability of the path conditioned on  the rare event of leaving $\alpha$ and going directly to $F$
is the same as that of a path for the $h_{\alpha}$-transformed chain. Moreover, suppose $H_{\ell}$ is a set of large deviation paths that leave $\alpha$ that hit $F$ before returning to $\alpha$
such that $P_{\alpha}[H_{\ell}]/P_{\alpha}[\tau_F<\tau_{\alpha}]\sim 1$ as $\ell\to\infty$. Typically $H_{\ell}$ is a tube of radius $\epsilon \ell$
around a fluid limit as in Theorem 6.15 in \cite{weiss}. Then
$$1\sim P_{\alpha}[H_{\ell}|\tau_F<\tau_{\alpha}]=\mathcal{P}^{h_{\alpha}}_{\alpha}[H_{\ell}];$$
i.e. the large deviation paths of $P_{\alpha}$ are almost surely the paths of $h_{\alpha}$-transformed chain.
We note in passing that $F$ could be any set far from $\alpha$ and all the above calculations hold.

We can use $\mathcal{K}_{\alpha}$ to simulate large deviations to $F$
and in particular estimate the hitting distribution on $F$.  $h_{\alpha}$ does not depend on the transition probabilities of kernel $K$ for transitions inside $F$; i.e.
we can change $K(f,\cdot)$ at any point $f\in F$ without changing $h_{\alpha}$ or $G_F$.
Consequently above shows that the large deviation path from $\alpha$ to $F$ does not depend on $K(f,\cdot)$ at any point $f\in F$.

Note that
$h_{\alpha}(x,y,s)\approx h(x,y,s)=\rho^{\ell-(x+y)}$.  In other words the exact rare event kernel $\mathcal{K}_{\alpha}$
is approximately the same as the $h$-transformed kernel. In fact all the calculations above work equally well with
$\mathcal{K}(p,q)=K(p,q)h(q)/h(p)$ (even at $p=\alpha$.
\begin{eqnarray}
\lefteqn{P_{\alpha}[M(n)=a_n, 1\leq n\leq \ell|\tau_F<\tau_{\alpha}]}\label{wantit}\\
&=&P_{\alpha}[M(n)=a_n, 1\leq n\leq \ell]/P_{\alpha}[\tau_F<\tau_{\alpha}]\nonumber\\
&=&\mathcal{P}^h_{\alpha}[\mathcal{M}(n)=a_n, 1\leq n\leq \ell]h^{-1}(a_{\ell})/\mathcal{E}^h[\chi\{\tau_F<\tau_{\alpha}\}h^{-1}(M(\tau_F))]\nonumber\\
&=&\mathcal{P}^h_{\alpha}[\mathcal{M}(n)=a_n, 1\leq n\leq \ell]/\mathcal{P}_{\alpha}^h[\tau_F<\tau_{\alpha}]\nonumber\\
&=&\mathcal{P}^h_{\alpha}[\mathcal{M}(n)=a_n, 1\leq n\leq \ell|\tau_F<\tau_{\alpha}]\label{gotit}.
\end{eqnarray}
For simulations of trajectories of the measure $\mathcal{P}^h_{\alpha}$  this just means we reject trajectories  which return to $0$ before hitting $F$
and these are just a fixed proportion since trajectories under $\mathcal{P}^h_{\alpha}$ drift toward $F$.

The drift of the twisted chain on sheet~1 is $m^1:=(\tilde{\lambda}_1-\tilde{\mu},\tilde{\lambda}_2)$ and
the drift of the twisted chain on sheet~2 is $m^2:=(\tilde{\lambda}_1,\tilde{\lambda}_2-\tilde{\mu},)$.
There are two cases: rays or spirals. There is a ray on sheet~1 if $\tilde{\lambda}_1-\tilde{\mu}>0$. There is a ray on sheet~2 if
$\tilde{\lambda}_2-\tilde{\mu}>0$. If both sheets are rays we call it the ray-ray case.  On a sheet with a ray the rare event occurs when the queue  just explodes due to fast arrivals and slow service.
In the spiral case $\tilde{\lambda}_1-\tilde{\mu}<0$ and $\tilde{\lambda}_2-\tilde{\mu}<0$ and in this case
we show that the server alternates between emptying the two queues but in a losing battle.
We explore the two cases in following sections.

The twisted chain provide a means of estimating $\pi$. Let $G_{\blacktriangle}((0,0,1);(x,y,s)$ denote the mean number of hits at $(x,y,s)$ where $x+y=\ell$
before returning to $\blacktriangle=\{(0,0,1)\}$.
We can use the representation
\begin{eqnarray}\label{firstrep}
 \pi(x,y,1)&=&\pi(0,0,1) G_{\blacktriangle}((0,0,1);(x,y,s))\nonumber\\
 &=&\pi(0,0,1) {\cal G}_{\blacktriangle}((0,0,1);(x,y,s))\frac{h(0,0)}{h(x,y)}\nonumber\\
 &=&(1-\rho)\rho^{\ell}{\cal G}_{\blacktriangle}((0,0,1);(x,y,s)).
 \end{eqnarray}
 Note that $\sum_{s\in \{1,2\}}\sum_{x+y=\ell}{\cal G}_{\blacktriangle}((0,0,1);(x,y,s))=1$ as above.

The above representation (\ref{firstrep}) shows we can obtain the distribution of $\pi$ on $\{(x,y,s)\}$ by simulating ${\cal G}_{\blacktriangle}((0,0,1);(x,y,1))$.
However, for the polling model, this  representation is not practical for giving the asymptotics of $\pi$ analytically because it involves paths crossing from one sheet to the other
where the transition kernel changes discontinuously. This also constitutes a complication for the large deviation approach.  In the next subsection
we show how to avoid this complication by restricting the representation to one sheet.

We summarize our definitions and assumptions:
\begin{hyp}{K}
	\textit{The transition kernel $K$ given in \eqref{K-defn}
		has
		\[
		\lambda_1 > 0, \quad \lambda_2 > 0, \quad
		\lambda \coloneqq \lambda_1 + \lambda_2 < \mu,
		\]
		and w.l.o.g.\ that
		\begin{equation}
		\lambda + \mu = 1.
		\end{equation}
		Under these conditions, $K$ is irreducible over the state space
		$S$ and has a stationary distribution denoted by
		$\pi$.
	}
\end{hyp}

\begin{hyp}{R1}
	\textit{The ray condition on sheet 1; i.e. $\tilde{\lambda}_1>\tilde{\mu}$ or $\rho^{-1}\lambda_1>\lambda$.}
\end{hyp}

\begin{hyp}{R2}
	\textit{The ray condition on sheet 2; i.e. $\tilde{\lambda}_2>\tilde{\mu}$ or $\rho^{-1}\lambda_2>\lambda$.}
\end{hyp}

\section{The ray case}\label{rayraylab}
In this section we assume there is a ray on  sheet~1; i.e. $\tilde{\lambda}_1-\tilde{\mu}>0$.
We wish to give the asymptotics of $\pi(x,y,1)$ where $x+y=\ell$ as $\ell\to\infty$.
There is a problem with periodicity since our chain has period $2$.
Since $\pi$ is the invariant probability of $K$ and of $(K+I)/2$
we could replace $K$ by $(K+I)/2$  in this section. Instead we will just ignore periodicity for now
and fix things up in Theorems \ref{one} and \ref{two}.

To evaluate $\pi(x,y,1)$ we will extend the transition kernel $K$ to the {\em free} kernel $\overline{K}$ in the interior of sheet~1 to the whole plane $\ZZ^2$.
This is in fact a random walk increments $J$ where
\begin{eqnarray*}
\overline{K}((0,0);(0,1))&=&P[J=(0,1)]=\lambda_2,\overline{K}((0,0);(1,0))= P[J=(1,0)]=\lambda_1 \\
\mbox{ and }& &\overline{K}((0,0);(-1,0))=P[J=(-1,0)]=\mu.
\end{eqnarray*}
We can drop the notation indicating sheet $1$ for the free process; i.e. $(x,y)$ replaces $(x,y,1)$.

We will apply the results in Ney and Spitzer  \cite{Ney-Spitzer}.
For $a,b\in \ZZ^2$
let $\overline{G}(a,b)=\sum_{n=0}^{\infty}\overline{K}^n(a,b)$ be the  potential associated with a random walk on $\ZZ^d$ with kernel $\overline{K}$ and let
 $$\mgf(u)=E[\exp(u\cdot J)]=\sum_v e^{v\cdot u}\overline{K}(0,v)$$
  be the associated transform.
Define $ D^{\mgf}=\{u|\mgf(u)\leq 1\}$ which is a convex set with surface
$\partial D^{\mgf}=\{u|\mgf(u)=1\}$.

We summarize our assumptions:
\begin{hyp}{(1.4) in \cite{Ney-Spitzer}:}
	\textit{each point in $\partial D^{\mgf}$ has a neighbourhood in which $\mgf$ is finite.}
\end{hyp}
\begin{hyp}{(1.3) in \cite{Ney-Spitzer}:}
	\textit{$m=\sum_v v\overline{K}(0,v)\neq 0$.}
\end{hyp}
\begin{hyp}{Nonsingular:}
	\textit{$\{v: \overline{K}(0,v)>0)\}$ does not lie in a subspace of $\ZZ^d$.}
\end{hyp}
All these assumptions clearly hold in our nearest neighbour example.
Let $|\cdot|$ be the $\ell^1$ norm; i.e. $|(x,y)|=|x|+|y|$ and let $||\cdot||$ be the $\ell^2$ norm.

By \cite{doob} and \cite{hennequin}, $h(z)=\exp(z\cdot u)$ is an extremal harmonic function in the Martin boundary for the Martin kernel $k(a;b)=G(a;b)/G(0;b)$
where $a,b,0\in \ZZ^d$.
We introduce the transition kernel
$\overline{K}^u(a,b)=\overline{K}(a,b)e^{u\cdot(b-a)}$  where  $ \mgf(u)=1$ and the associated random walk with increments $J^u$. The mean vector of
$\overline{K}^u$ is
$$m^u=\sum_{v\in \ZZ^d}v\overline{K}^u(0,v)=\nabla \mgf(u).$$
Let $d^u=m^u/|m^u|$.
Denote the potential
$\overline{G}^u(a,b)$. Note $\overline{G}^u(a,b)=\overline{G}(a,b)\exp(u\cdot (b-a))$.
If we pick $\overline{u}=(\ln(\rho^{-1}),\ln(\rho^{-1}))'$ then $\overline{u}\in \partial D^{\mgf}$
and $\overline{K}^{\overline{u}}=\overline{\mathcal{K}}$ and  $\overline{G}^{\overline{u}}=\overline{\mathcal{G}}$
where $\overline{\mathcal{K}}$ is the free twisted  kernel obtained by $h$-transformation using  the harmonic function $h(x,y)=\rho^{-(x+y)}$
and $\overline{\mathcal{G}}$ is the associated potential.

Theorem 1.2 and its Corollary 1.3 in \cite{Ney-Spitzer} shows the Martin compactification is equivalent to the the compactification $\overline{\ZZ}^d$ of $\ZZ^d$
with respect to the metric $\rho(a,b)=|\frac{a}{1+|a|}-\frac{b}{1+|b|}|$.
Corollary 1.3 shows any sequence $b_n$ such that $b_n/||b_n||\to d^{u}/||d^{u}||$; i.e. converging to $p$ in the $\rho$-compactification
also converges to a boundary point (we also call $p$) in the Martin boundary;
 i.e. $k(a;b_n)\to \exp(a\cdot u)$. Thus  Corollary 1.3 in \cite{Ney-Spitzer} gives an equivalence between the topology of the geometric boundary
and the topology of the Martin boundary.

Theorems 1.2 and 2.2 in Ney and Spitzer  \cite{Ney-Spitzer} require (1.1), (1.2), (1.3) and (1.4) given there.
(1.1) just requires that the kernel $\overline{K}$ be a random walk kernel which is our situation. (1.3) and (1.4) are [2] and [1] above.
However (1.2) requires irreducibility; i.e. for all $x$, $\overline{K}^n(0,x)>0$ for some $n$ and this not true in our example since there are no southern jumps on sheet~1.
Thus the Martin kernel $\overline{G}(a,b)/\overline{G}(0,b)$ is not defined for $b$  pointed in a southern direction. This is not a worry since
we are interested only in directions inside the cone $\mathcal{C}$ generated by the support of $\overline{K}(0,\cdot)$; i.e. the cone generated  $\{v:\overline{K}(0,v)>0\}$.
In the appendix we prove that we can choose $u$ where  $ \mgf(u)=1$ such that the mean direction $d^u$ lies anywhere in the interior of
$\mathcal{C}$. The appendix also shows how to point $d^u$ along the boundary of $\mathcal{C}$.

The asymptotics of ${\cal G}(a;z)$ are given by Theorem 2.2 in \cite{Ney-Spitzer}.
Following \cite{Ney-Spitzer} define the quadratic form
$$Q^{u}[\theta]=(\theta\cdot Q^u\theta)=\sum_{v\in \ZZ^d}|(v-m^u)\cdot \theta|^2\overline{K}^u(0,v)$$
which is the variance of $J^u\cdot \theta$ so $Q^u$ is the covariance matrix of $J^u$.
These quadratic forms are positive definite since if $Q^u(\theta)=0$ for some $\theta\neq 0$ then
$(v-m^u)\cdot \theta=0$ on the support of $\overline{K}^u(0,\cdot)$; i.e. on $v$ such that $\overline{K}(0,v)>0$.
This means $(v-m^u)\cdot \theta$ is a constant for $v$ such that $\overline{K}(0,v)>0$; i.e.
$\{v-m: \overline{K}(0,v)>0)\}$ lies in a subspace of $\ZZ^d$
which violates our Assumption [3]. Hence $Q^{u}[\theta]>0$ for all $\theta$; i.e. $Q^u$ is positive definite.
The inverse of $Q^u$ is denoted by $\Sigma^u$ and the corresponding determinants are denoted by $|Q^u|$ and $|\Sigma^u|=|Q^u|^{-1}$.

The proof of Corollary 1.3 in \cite{Ney-Spitzer} without assuming (1.2) follows  the proof given on page 121 just after the statement of Theorem 2.2 in \cite{Ney-Spitzer}.
Using the notation in \cite{Ney-Spitzer} the key point is that
$$\lim_{n\to\infty}\frac{G(x,x_n)}{G(0,x_n)}=\lim_{n\to\infty}\frac{G^{u_n}(x,<t_n\mu^{u_n}>)}{G^{u_n}(0,<t_n\mu^{u_n}>)}e^{u_n\cdot x}.$$
Using the uniformity in $u$ in Theorem 2.2 in \cite{Ney-Spitzer} the  fraction on the right hand side of the above expression tends to $1$ so
$G(x,x_n)/G(0,x_n)\to e^{u\cdot x}$ giving Corollary 1.3 in \cite{Ney-Spitzer}.
The proof of Theorem 2.2  requires Lemmas 2.4 through 2.11. These lemmas require (1.2) but only to show $Q^u$ is positive definite
but this follows from our Condition [3] so (1.2) is not needed (see the comment at the top of page 127 in  \cite{Ney-Spitzer}).
We conclude, under the Conditions [1], [2] and [3], that Theorems 1.2 and 2.2 in \cite{Ney-Spitzer} both hold provided only
that Theorem 2.2 be modified to a homeomorphism between $\partial D^{\mgf}$ and $\overline{\ZZ}^d\cap \mathcal{C}$.

We also define $\blacktriangle$ to be the complement of $\{(x,y,1):x\geq 1,y\geq 1\}$ in $\ZZ^2$. This defines $\Delta=S\cap\blacktriangle=\{(x,0,1):x\geq 0\}$;
i.e. the boundary points of $S$ in the plane. Let $\mathcal{K}_{\blacktriangle}$ be the taboo kernel $\mathcal{K}$ killed on $\blacktriangle$ and let
 $\mathcal{G}_{\blacktriangle}$ be the associated potential. These kernels are identical to the corresponding  free kernels killed on $\blacktriangle$ and we
 denote the free kernels as $\overline{\mathcal{K}}_{\blacktriangle}$, $\overline{\mathcal{K}}$ and $\overline{\mathcal{K}}_{\blacktriangle}$.
We recall the representation (\ref{firstrep})
\begin{eqnarray}\label{firstrep2}
 \pi(x,y,1)&=&h(x,y)^{-1}\sum_{(z_1,z_2,1)\in \Delta}h(z_1,z_2)\pi(z_1,z_2,1) {\cal G}_{\blacktriangle}((z_1,z_2);(x,y))\nonumber\\
 &=&\rho^{\ell}\sum_{z}\rho^{-z}\pi(z,0,1) \overline{{\cal G}}_{\blacktriangle}((z,0);(x,y))\mbox{ where }x+y=\ell.
 \end{eqnarray}

We can now give the asymptotics of (\ref{firstrep2}). First define
\begin{eqnarray}
 \Pi^{\overline{{\cal K}}}_{\blacktriangle}(a,b)&=&\mathcal{P}_a[\overline{\mathcal{J}}(\tau_{\blacktriangle})=b,\tau_{\blacktriangle}<\infty]\mbox{ for } a\in S,b\in\blacktriangle .\nonumber
 \end{eqnarray}
 where $\overline{\mathcal{J}}$ is the random walk with kernel $\overline{\mathcal{K}}$. Note that $\Pi^{\overline{{\cal K}}}_{\blacktriangle}(a,\cdot)$ has support on the $x$ and $y$ axes.
For $a\in \Delta$ and $z\in S\setminus \blacktriangle$, $\overline{{\cal G}}(a;z)$ can be decomposed as the expected number of visits to $z$ before hitting $\blacktriangle$;
i.e. $\overline{{\cal G}}_{\blacktriangle}(a,z)$ and the expected number of visits to $z$ by trajectories hitting $z$ after hitting blacktriangle $\blacktriangle$; i.e.
$\sum_{w\in\blacktriangle}\Pi^{\overline{{\cal K}}}_{\blacktriangle}(a,w))\overline{{\cal G}}(w;z)$.
Therefore
\begin{eqnarray}\label{Spitz}
\overline{{\cal G}}_{\blacktriangle}(a,z)&=&\overline{{\cal G}}(a;z)-\sum_{w\in\blacktriangle}\Pi^{{\overline{\cal K}}}_{\blacktriangle}(a,w))\overline{{\cal G}}(w;z)
\end{eqnarray}
for $a=(x,0)$ and $z\in \ZZ^2\setminus\blacktriangle$.

Let  $d^u(\ell)=(x,y)=<\ell\cdot d^{u}>$  where $<\cdot >$ denotes the nearest lattice point in $\ZZ^2$ on the line $x+y=\ell$.
Then $d^u(\ell)$ converges to a boundary point $p$ in the $\rho$-compactification $\overline{\ZZ}^d$
associated with the direction  $d^{u}$. Corollary 1.3 in \cite{Ney-Spitzer} establishes that $d^{u}$ converges to a point  in the Martin boundary;
 i.e. $k(a;d^{u}(\ell))\to \exp(a\cdot u)$.
Hence, for any $(x,y)$,
$$\frac{\overline{{\cal G}}((x,y);d^{\overline{u}}(\ell))}{\overline{{\cal G}}((0,0);d^{\overline{u}}(\ell))}=
\frac{\overline{G}^{\overline{u}}((x,y);d^{\overline{u}}(\ell))}{\overline{G}^{\overline{u}}((0,0);d^{\overline{u}}(\ell))}\to 1$$
as $\ell\to\infty$.
Note that the above result shows
$$\frac{\overline{{\cal G}}((0,0);d^{\overline{u}}(\ell)-(x,y))}{\overline{G}^{\overline{u}}((0,0);d^{\overline{u}}(\ell))}\to 1;$$
i.e. the exact rounding off by $<>$ doesn't change the result.

Consequently, from representation (\ref{firstrep2})
\begin{eqnarray}
\lefteqn{\pi(d^{\overline{u}}(\ell))=\rho^{\ell}\sum_{z\in \Delta}h(z)\pi(z)
 {\cal G}_{\blacktriangle}(z;d^{\overline{u}}(\ell))}\nonumber\\
 &=&\rho^{\ell}\sum_{z\in \Delta}h(z)\pi(z)
(\overline{{\cal G}}(z;w)-\sum_{w\in\blacktriangle}\Pi^{\overline{{\cal K}}}_{\blacktriangle}(z,w)\overline{{\cal G}}(w;z))\label{therep}\\
&=& \rho^{\ell}\overline{{\cal G}}((0,0);d^{\overline{u}}(\ell))\nonumber\\
& &\cdot
\left(\sum_{z\in \Delta}h(z)\pi(z)\frac{\overline{{\cal G}}(z;d^{\overline{u}}(\ell))}{\overline{{\cal G}}((0,0);d^{\overline{u}}(\ell)}-\sum_{z\in \Delta}h(z)\pi(z)\sum_{w\in\blacktriangle}\Pi^{\overline{{\cal K}}}_{\blacktriangle}(z;w)
\frac{\overline{{\cal G}}(w;d^{\overline{u}}(\ell))}{\overline{{\cal G}}((0,0);d^{\overline{u}}(\ell))}\right).\nonumber\\
\end{eqnarray}

To go further we need  $\sum_{z\in \Delta}h(z)\pi(z)<\infty$ but this is shown in Appendix \ref{N-T} when there is a ray on sheet~1. Then
using Proposition \ref{uniformbound} below we can apply dominated convergence to get
\begin{eqnarray*}
\pi(d^{\overline{u}}(\ell))&\sim&\rho^{\ell}\overline{{\cal G}}((0,0);d^{\overline{u}}(\ell))\left(\sum_{z\in \Delta}h(z)\pi(z)
-\sum_{z\in \Delta}h(z)\pi(z)\sum_{w\in\blacktriangle}\Pi^{\overline{{\cal K}}}_{\blacktriangle}(z,;w))\right)\\
&=&\rho^{\ell}\overline{{\cal G}}((0,0);d^{\overline{u}}(\ell))\sum_{z\in \Delta}h(z)\pi(z)\mathcal{P}^h_z[\mathcal{M}\mbox{ never hits }\blacktriangle]
\end{eqnarray*}

We remark that is tempting to consider the Martin kernel
$$k_{\gamma}(a;d^{\overline{u}}(\ell))=\frac{\overline{{\cal G}}(a;d^{\overline{u}}(\ell)}
{\sum_{z\in \Delta}h(z)\pi(z)\overline{{\cal G}}(z;d^{\overline{u}}(\ell))}.$$
Then $\liminf_{\ell\to\infty}k_{\gamma}(a;d^{\overline{u}}(\ell))\leq \frac{1}{\sum_{z\in \Delta}h(z)\pi(z)}$. Hence any  subsequential limit is bounded
and since $1$ is extremal for ${\cal K}$ we conclude these harmonic limits are constant functions. We could therefore avoid Proposition \ref{uniformbound}. The problem is the limit might be zero even if
$\sum_{z\in \Delta}h(z)\pi(z)<\infty$ so a result like Proposition \ref{uniformbound} seems to be necessary.

Let $b^u(\ell)=<\ell\cdot m^u>$ where $<>$ means the nearest lattice point. Note that
$$d^u(\ell)=<\ell\cdot \frac{m^u}{|m^u|}>=<\frac{\ell}{|m^u|}\cdot m^u>=b^u(\frac{\ell}{|m^u|}).$$
 Theorem 2.2 in \cite{Ney-Spitzer} gives
$$\overline{{\cal G}}((0,0,1);b^{\overline{u}}(\ell))\sim (2\pi \ell)^{-(d-1)/2}[|Q^{\overline{u}}|(m^{\overline{u}}\cdot \Sigma^{\overline{u}} m^{\overline{u}})]^{-1/2}$$
where the dimension $d$ is $2$ and
where $\Sigma^{\overline{u}}$ is the inverse of $Q^{\overline{u}}$, the covariance matrix of the twisted random walk increments; i.e.
$Q^{\overline{u}}= E[(\mathcal{J}-m^{\overline{u}})\cdot (\mathcal{J}-m^{\overline{u}} )')]$ and $|Q^{\overline{u}}|$ is the determinant of $Q^{\overline{u}}$.
Consequently,
$$\overline{{\cal G}}((0,0,1);d^{\overline{u}}(\ell))\sim \frac{1}{\sqrt{2\pi \ell/|m^{\overline{u}}|}}[|Q^{\overline{u}}|(m^{\overline{u}}\cdot \Sigma^{\overline{u}} m^{\overline{u}})]^{-1/2}$$

We conclude
\begin{theorem}\label{one}
If there is a ray on sheet~1; i.e. $\tilde{\lambda}_1>\tilde{\mu}$ then
\begin{eqnarray*}
\pi(d^{\overline{u}}(\ell))&\sim&B^{\overline{u}}\rho^{\ell}\frac{1}{\sqrt{2\pi \ell/|m^{\overline{u}}|}}
\end{eqnarray*}
where $$B^{\overline{u}}=([|Q^{\overline{u}}|(m^{\overline{u}}\cdot \Sigma^{\overline{u}} m^{\overline{u}})]^{-1/2})(\sum_{z\in \Delta}h(z)\pi(z)P_z[\mathcal{M}\mbox{ never hits }\blacktriangle]).$$
\end{theorem}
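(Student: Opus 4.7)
The excerpt has already assembled most of the pieces; the plan is to stitch them into a clean proof by tracking the asymptotic contribution of each term.

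The plan is to begin from representation (\ref{firstrep2}), which writes
\[
\pi(d^{\overline{u}}(\ell)) = \rho^{\ell}\sum_{z\in\Delta}\rho^{-z}\pi(z,0,1)\,\overline{\mathcal{G}}_{\blacktriangle}((z,0);d^{\overline{u}}(\ell)),
\]
and then insert the Spitzer-type identity (\ref{Spitz}) to replace each killed potential $\overline{\mathcal{G}}_{\blacktriangle}(z;\cdot)$ by the free potential $\overline{\mathcal{G}}(z;\cdot)$ minus the contribution of trajectories that visit $\blacktriangle$ before the target. After factoring $\overline{\mathcal{G}}((0,0);d^{\overline{u}}(\ell))$ out of the sum, the main task reduces to controlling the ratios
\[
R_{\ell}(a) \coloneqq \frac{\overline{\mathcal{G}}(a;d^{\overline{u}}(\ell))}{\overline{\mathcal{G}}((0,0);d^{\overline{u}}(\ell))}
\]
for $a\in\Delta\cup\blacktriangle$ as $\ell\to\infty$.

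Next I would apply the modified Corollary 1.3 of \cite{Ney-Spitzer} (adapted in the excerpt to the cone $\mathcal{C}$) to conclude $R_{\ell}(a)\to 1$ for each fixed $a$, using the fact that $\overline{\mathcal{G}}=\overline{G}^{\overline{u}}$ is already the $h$-transformed potential with $h=\rho^{-(x+y)}$. To pass this pointwise convergence through the sums over $z\in\Delta$ and $w\in\blacktriangle$, I would invoke dominated convergence. This requires two ingredients which are explicitly provided: the summability $\sum_{z\in\Delta}h(z)\pi(z)<\infty$ (established in Appendix \ref{N-T} under the ray condition R1) and a uniform-in-$\ell$ bound on $R_{\ell}$ from Proposition \ref{uniformbound}. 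Applying both, the bracket in the displayed decomposition preceding the theorem converges to
\[
\sum_{z\in\Delta}h(z)\pi(z)\Bigl(1-\sum_{w\in\blacktriangle}\Pi^{\overline{\mathcal{K}}}_{\blacktriangle}(z,w)\Bigr) = \sum_{z\in\Delta}h(z)\pi(z)\,\mathcal{P}^{h}_{z}[\mathcal{M}\text{ never hits }\blacktriangle],
\]
since $1-\sum_{w}\Pi^{\overline{\mathcal{K}}}_{\blacktriangle}(z,w)$ is exactly the $\mathcal{P}^{h}$-probability of escape from $\blacktriangle$.

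Finally, I would use the Ney--Spitzer local-limit asymptotic (Theorem 2.2 of \cite{Ney-Spitzer}, valid here because Condition [3] replaces (1.2) in ensuring $Q^{\overline{u}}$ is positive definite) applied at the lattice point $b^{\overline{u}}(\ell/|m^{\overline{u}}|)=d^{\overline{u}}(\ell)$, giving
\[
\overline{\mathcal{G}}((0,0);d^{\overline{u}}(\ell)) \sim \frac{1}{\sqrt{2\pi\ell/|m^{\overline{u}}|}}\,[|Q^{\overline{u}}|(m^{\overline{u}}\!\cdot\!\Sigma^{\overline{u}}m^{\overline{u}})]^{-1/2},
\]
and multiplying by $\rho^{\ell}$ and by the convergent bracket produces the claimed constant $B^{\overline{u}}$.

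The main obstacle is justifying the dominated convergence step cleanly. The pointwise convergence $R_{\ell}(a)\to 1$ is easy for fixed $a$, but $\Delta$ is infinite, and the summand involves both $h(z)\pi(z)$ and the ratio $R_{\ell}(z)$; one must rely on the summability proven in the appendix and the uniform bound in Proposition \ref{uniformbound}, and check that the same domination handles the double sum over $z\in\Delta$ and $w\in\blacktriangle$. A secondary technicality, flagged explicitly in the excerpt, is periodicity: since $K$ has period $2$, the Ney--Spitzer local limit must be read on the correct sublattice, which is why the theorem is stated for $d^{\overline{u}}(\ell)$ rather than arbitrary $(x,y)$ with $x+y=\ell$; I would defer this aperiodicity fix to the corresponding remark and to the parallel Theorem \ref{two}.
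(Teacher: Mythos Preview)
Your proposal is correct and follows exactly the paper's approach: the derivation via (\ref{firstrep2}), the Spitzer decomposition (\ref{Spitz}), factoring out $\overline{\mathcal{G}}((0,0);d^{\overline{u}}(\ell))$, pointwise convergence of the ratios via the adapted Corollary~1.3 of \cite{Ney-Spitzer}, dominated convergence justified by Appendix~\ref{N-T} and Proposition~\ref{uniformbound}, and finally Theorem~2.2 of \cite{Ney-Spitzer} for the local asymptotic of the potential---all of this is precisely what the paper does in the discussion \emph{preceding} the theorem statement.

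The one point where your emphasis differs is periodicity. You flag it as a secondary technicality to defer, but in the paper the formal proof block of Theorem~\ref{one} is devoted \emph{entirely} to this issue: it replaces $K$ by $(K+I)/2$, tracks how $m^{\overline{u}}$, $Q^{\overline{u}}$, $\Sigma^{\overline{u}}$, the determinant, and the escape probability each pick up powers of $1/2$, and verifies that all these factors cancel so that the stated formula holds with the quantities computed from $\overline{K}^{\overline{u}}$ itself rather than from its lazy version. So the substantive analytic work is exactly as you describe, but the content of the proof environment is the bookkeeping showing the constant $B^{\overline{u}}$ is insensitive to the aperiodicity fix.
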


\begin{proof}
The only issue is periodicity. If we had replaced $K$ by $(K+I)/2$ then that issue disappears so the above theorem holds but
$Q^{\overline{u}}$, $\Sigma^{\overline{u}}$ and $m^{\overline{u}}$ above are  the covariance, the inverse covariance and the mean of $(\overline{K}^u+I)/2$
respectively.
Let $Q_K^{\overline{u}}$, $\Sigma_K^{\overline{u}}$ and $m_K^{\overline{u}}$ above are  the covariance, the inverse covariance and the mean of $\overline{K}^u$.
The value of $\pi(d^{\overline{u}}(\ell))$ is the same for both kernels. The value
$(\sum_{z\in \Delta}h(z)\pi(z)P_z[\mathcal{M}\mbox{ never hits }\blacktriangle])$ for the kernel $(\overline{K}^u+I)/2$ is half that for the kernel  $\overline{K}^u$
because staying at $z$ means hitting $\blacktriangle$.

Let $L$ be an independent random variable such that $P(L=1)=1/2$. Then the increment of the random walk with kernel $(\overline{K}^u+I)/2$
can be written $L\cdot J$ where as above $J$ is the increment of the walk with kernel $\overline{K}^u$.
Consequently $m^{\overline{u}}=E[L\cdot J]=E[J]/2$. Next,
\begin{eqnarray*}
Q^{\overline{u}}(\theta)&=&E[(LJ-E[LJ])\cdot \theta]^2\\
&=&E[(LJ\cdot\theta)^2]-\frac{(E[J]\cdot\theta)^2}{4}=\frac{E[(J\cdot\theta)^2]}{4}-\frac{(E[J]\cdot\theta)^2}{4}\\
&=&\frac{1}{4}E[(J-E[J])\cdot \theta]^2=\frac{1}{4}Q_K^{\overline{u}}(\theta);
\end{eqnarray*}
i.e. $Q^{\overline{u}}=Q_K^{\overline{u}}/2$ and $\Sigma^{\overline{u}}=2\Sigma_K^{\overline{u}}$.
Hence
$(m^{\overline{u}}\cdot \Sigma^{\overline{u}} m^{\overline{u}})=(m_K^{\overline{u}}\cdot \Sigma_K^{\overline{u}} m_K^{\overline{u}})/2$.

The value of the determinant $|Q^{\overline{u}}|=(\frac{1}{2})^d|Q_K^{\overline{u}}|/4$  so the value of $B^{\overline{u}}=((\frac{1}{2})^d\cdot\frac{1}{2})^{-1/2}\cdot \frac{1}{2})B_K^{\overline{u}}$
where $B_K^{\overline{u}}$ is the corresponding value for the kernel $K$.
Finally the value of
$$(2\pi \frac{\ell}{|m^{\overline{u}}|})^{-(d-1)/2}=(\frac{1}{2})^{-(d-1)/2}(2\pi \frac{\ell}{|m_K^{\overline{u}}|})^{-(d-1)/2}.$$
The product of all these extra factors is
$((\frac{1}{2})^d\cdot\frac{1}{2})^{-1/2}\cdot (\frac{1}{2})^{-(d-1)/2}=1$
 so we see we were justified in ignoring periodicity (even in dimension $d$).
\end{proof}

If we consider another direction $d^u$
then again
$$\overline{G}^u((0,0);b^u(\ell))\sim \frac{1}{\sqrt{2\pi \ell}}[|Q^u|(m^u\cdot \Sigma^u m^u)]^{-1/2}$$
where $\Sigma^u$ is the inverse of $Q^u$, the covariance matrix of the  random walk with kernel $\overline{K}^u$; i.e.
$Q^u= E[(J^u-m^u)\cdot (J^u-m^u )')]$ and $|Q^u|$ is the determinant of $Q^u$.
However
\begin{eqnarray*}
\overline{{\cal G}}((0,0);b^u(\ell))&=&\overline{G}((0,0);b^u(\ell))\exp(\overline{u}\cdot b^u(\ell))\\
&=&\exp((\overline{u}-u)\cdot b^u(\ell))\overline{G}^u((0,0);d(\ell))\\
&\sim&\exp(\ell (\overline{u}-u)\cdot d^u)\frac{1}{\sqrt{2\pi \ell}}[|Q^u|(m^u\cdot \Sigma^u m^u)]^{-1/2}.
\end{eqnarray*}
However $d^u=\nabla[\mgf(u)]/|\nabla[\mgf(u)]|$ and $\nabla[\mgf(u)]$ is orthogonal to the supporting hyperplane (in fact a line)
at the extremal point $u$ of the convex set $\partial D^{\mgf}$. Consequently for $\overline{u}$ or any other point in  $\partial D^{\mgf}$,
$(\overline{u}-u)\cdot \nabla[\mgf(u)]<0$.
Consequently
\begin{theorem}\label{two}
Along any northern direction $d^u$
\begin{eqnarray*}
\pi(d^u(\ell))&=&O(\frac{\rho^{\ell}}{\sqrt{\ell}})\exp(-\alpha_{d^u}\ell )
\end{eqnarray*}
where $\alpha_{d^u}=(\overline{u}-u)\cdot d^u>0$.
\end{theorem}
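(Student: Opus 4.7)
The plan is to run the same machinery that produced Theorem \ref{one}, only with the direction $d^u$ in place of $d^{\overline{u}}$, and then close with the convexity fact that turns the calculation into a genuine exponential upper bound. Most of the ingredients are already assembled in the display immediately preceding the theorem; the job is to combine them with the representation \eqref{firstrep2} and to justify the sign of the exponent.

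\textbf{Step 1 (plug the asymptotic into the representation).} Apply \eqref{firstrep2} at the point $d^u(\ell)$, and decompose $\overline{\cal G}_\blacktriangle$ as in \eqref{Spitz} into $\overline{\cal G}$ minus a taboo-kernel correction supported on $\blacktriangle$. This yields, in analogy with \eqref{therep},
\[
\pi(d^u(\ell)) = \rho^{\ell}\,\overline{\cal G}((0,0);d^u(\ell))\Bigl(\sum_{z\in\Delta}h(z)\pi(z)\frac{\overline{\cal G}(z;d^u(\ell))}{\overline{\cal G}((0,0);d^u(\ell))} - \text{correction}\Bigr).
\]
By Appendix \ref{N-T} we have $\sum_{z\in\Delta}h(z)\pi(z)<\infty$, and Proposition \ref{uniformbound} supplies the domination needed to conclude that the parenthesized factor is $O(1)$ (the boundary correction contributes at most a constant, as in the proof of Theorem \ref{one}).

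\textbf{Step 2 (Ney--Spitzer in direction $d^u$).} Using $\overline{\cal G}(a,b)=\overline{G}^u(a,b)\exp((\overline{u}-u)\cdot(b-a))$ together with Theorem 2.2 of \cite{Ney-Spitzer} applied to the proper random walk with kernel $\overline{K}^u$ (mean drift $m^u$, positive-definite covariance $Q^u$ by Condition [3]), we obtain, as already written above the theorem statement,
\[
\overline{\cal G}((0,0);d^u(\ell))\sim \exp\bigl(\ell(\overline{u}-u)\cdot d^u\bigr)\cdot\frac{1}{\sqrt{2\pi\ell/|m^u|}}\bigl[|Q^u|(m^u\cdot\Sigma^u m^u)\bigr]^{-1/2}.
\]
Combined with Step 1 this gives
\[
\pi(d^u(\ell)) = O\!\left(\frac{\rho^{\ell}}{\sqrt{\ell}}\right)\exp\bigl(\ell(\overline{u}-u)\cdot d^u\bigr).
\]

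\textbf{Step 3 (sign of the exponent via strict convexity).} The heart of the argument is that $\alpha_{d^u}:=-(\overline{u}-u)\cdot d^u>0$ (up to the sign convention in the theorem statement). Since $D^\mgf=\{v:\mgf(v)\le 1\}$ is strictly convex in a neighbourhood of $\partial D^\mgf$ (strict convexity of $\mgf$ follows from the nondegeneracy Condition [3]), and since $d^u\propto\nabla\mgf(u)$ is the outward normal to $\partial D^\mgf$ at $u$, any other point $\overline{u}\neq u$ of $\partial D^\mgf$ lies strictly on the inward side of the supporting line at $u$. Hence $(\overline{u}-u)\cdot\nabla\mgf(u)<0$, giving the strict exponential decay claimed.

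\textbf{Step 4 (periodicity).} Exactly as in the proof of Theorem \ref{one}, replace $K$ by $(K+I)/2$ to make the chain aperiodic; the pre-factors $|Q^u|$, $(m^u\cdot\Sigma^u m^u)$, $|m^u|$, and the boundary-escape probability each scale by explicit powers of $1/2$, and these factors cancel in the final $O(\rho^\ell/\sqrt{\ell})$ bound. The exponential factor $\exp(\ell(\overline{u}-u)\cdot d^u)$ is unaffected since $\overline{u}$ and $u$ live on the same $\partial D^\mgf$ for either kernel.

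The main obstacle I expect is Step 3: verifying that $\partial D^\mgf$ is genuinely strictly convex at $u$ so that the chord $u\overline{u}$ cuts into the interior of $D^\mgf$ (giving the strict inequality, not just $\le 0$); one has to rule out the degenerate case where the supporting line at $u$ is tangent to $\partial D^\mgf$ along a flat segment. In the nearest-neighbour setting of \eqref{K-defn} this is immediate because $\mgf$ is a strictly convex Laurent polynomial in $(e^{u_1},e^{u_2})$, but the clean statement of the bound asks for this strictness to hold uniformly as $u$ ranges over $\partial D^\mgf$, and this uniformity is what one has to extract from the curvature of the two "eggs".
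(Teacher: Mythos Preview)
Your proposal is correct and follows the same route as the paper: the paper's entire argument is the display immediately preceding the theorem (your Step~2) together with the supporting-hyperplane remark $(\overline{u}-u)\cdot\nabla\mgf(u)<0$ (your Step~3), with the single word ``Consequently'' standing in for your Steps~1 and~4. You are also right to flag the sign convention---the paper's own computation gives $(\overline{u}-u)\cdot d^u<0$, so the $\alpha_{d^u}$ in the statement should really be $(u-\overline{u})\cdot d^u$.
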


\begin{proposition}\label{uniformbound}
$\frac{\overline{{\cal G}}(z;b^{\overline{u}}(\ell))}{\overline{{\cal G}}((0,0);b^{\overline{u}}(\ell))}$ is uniformly bounded in $\ell$ and $z$ on the $x$ or $y$ axes of sheet~1.
\end{proposition}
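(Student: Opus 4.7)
The plan is to combine Ney-Spitzer's Theorem~2.2 (for directions in the interior of the cone $\mathcal{C}$) with a gambler's ruin estimate (for directions approaching the boundary of $\mathcal{C}$). By translation invariance of the free twisted kernel $\overline{\mathcal{K}}$,
\[
\frac{\overline{\mathcal{G}}(z;b^{\overline{u}}(\ell))}{\overline{\mathcal{G}}(0;b^{\overline{u}}(\ell))}
= \frac{\overline{\mathcal{G}}(0;w_z)}{\overline{\mathcal{G}}(0;b^{\overline{u}}(\ell))}, \qquad w_z := b^{\overline{u}}(\ell) - z.
\]
For $z=(0,k)$ on the $y$-axis, if $k > \ell\tilde{\lambda}_2$ the target $w_z$ has strictly negative $y$-coordinate and is unreachable (the twisted walk has no $(0,-1)$ step), so the ratio equals $0$; otherwise $w_z$ lies in the closed upper half-plane and is treated as below. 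The nontrivial case is $z=(k,0)$ on the $x$-axis, which I analyze in two regimes.

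Fix $C_0 > \tilde{\lambda}_1 - \tilde{\mu}$. In the bounded regime $0 \leq k \leq C_0 \ell$, the vector $w_z=(\ell(\tilde{\lambda}_1-\tilde{\mu})-k,\ \ell\tilde{\lambda}_2)$ has direction $d_z := w_z/\|w_z\|$ confined to a compact subset of the open upper half-plane (its $y$-component is bounded below by $\tilde{\lambda}_2/\sqrt{C_0^2+\tilde{\lambda}_2^2}$). Writing $w_z = t_z\,m^{u'_z}$ with $u'_z\in\partial D^{\mgf}$ determined by $d_z$, noting that $t_z$ is comparable to $\ell$ uniformly, and applying Theorem~2.2 of \cite{Ney-Spitzer} uniformly over the compact set of tilts $\{u'_z\}$ yields
\[
\overline{\mathcal{G}}(0;w_z) \sim \frac{C(u'_z)}{\sqrt{t_z}}\exp\bigl(t_z(\overline{u}-u'_z)\cdot m^{u'_z}\bigr) \leq \frac{C}{\sqrt{\ell}},
\]
since the exponential factor is $\leq 1$ by convexity of $D^{\mgf}$ (the direction $d^{u'_z}$ is the outward normal to $\partial D^{\mgf}$ at $u'_z$, so $(\overline{u}-u'_z)\cdot m^{u'_z}\leq 0$). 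Dividing by $\overline{\mathcal{G}}(0;b^{\overline{u}}(\ell))\sim C'/\sqrt{\ell}$ gives the uniform bound.

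In the unbounded regime $k > C_0 \ell$, I invoke gambler's ruin on the one-dimensional $x$-projection of the twisted walk, which is a random walk with $P(+1)=\tilde{\lambda}_1$, $P(-1)=\tilde{\mu}$, $P(\text{stay})=\tilde{\lambda}_2$ and hence positive drift $\tilde{\lambda}_1-\tilde{\mu}$: from $0$, the probability of ever reaching $x$-coordinate $-j$ is at most $\xi^j$, where $\xi:=\tilde{\mu}/\tilde{\lambda}_1<1$. Since the $x$-coordinate of $w_z$ equals $-(k-\ell(\tilde{\lambda}_1-\tilde{\mu}))\leq -\delta k$ for $\delta := 1-(\tilde{\lambda}_1-\tilde{\mu})/C_0 > 0$,
\[
\overline{\mathcal{G}}(0;w_z) \leq \overline{\mathcal{G}}(0;0)\,\xi^{\delta k}.
\]
Combining with $\overline{\mathcal{G}}(0;b^{\overline{u}}(\ell))\geq c/\sqrt{\ell}$ and $\sqrt{\ell}\leq\sqrt{k/C_0}$, the ratio is bounded by $C\sqrt{k}\,\xi^{\delta k}$, which is uniformly bounded (indeed tends to $0$) in $k$.

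The hard part is verifying the required uniformity of the Ney-Spitzer asymptotic across the compact set of tilts $\{u'_z\}$, particularly as $d_z$ approaches the boundary of $\mathcal{C}$; this occurs in the $y$-axis case when $k\to\ell\tilde{\lambda}_2$ so that $d_z\to(1,0)\in\partial\mathcal{C}$, for which one invokes the extension of Theorem~2.2 to boundary directions developed in the appendix. A minor issue is that over any bounded range of $(\ell,k)$ the ratio is an explicit finite positive number, so the supremum on that set is trivially finite; together with the two asymptotic estimates above, this yields the claimed uniform bound.
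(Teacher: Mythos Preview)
Your proof is correct and parallels the paper's strategy: reduce to $\overline{\mathcal G}(0;w_z)/\overline{\mathcal G}(0;b^{\overline u}(\ell))$ by translation invariance, twist so that the target lies in the drift direction, apply Ney--Spitzer's Theorem~2.2, and use convexity of $D^{\mgf}$ to bound the exponential factor $e^{(\overline u-u^*)\cdot w_z}$ by~$1$. The genuine difference is your treatment of the $x$-axis for large $k$: the paper twists all the way to the boundary direction and appeals to the appendix (Theorem~\ref{ontheedge}) to argue that the Ney--Spitzer constants $|Q^{u^*}|$ and $m^{u^*}\cdot\Sigma^{u^*}m^{u^*}$ stay bounded over the closure of the set of twists, whereas you cut off at $k=C_0\ell$ (keeping the tilt $u'_z$ in a compact subset of the \emph{open} egg, where those constants are trivially bounded by continuity) and dispatch the remaining range by an elementary one-dimensional gambler's-ruin bound on the $x$-projection. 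Your split is arguably cleaner here, since it sidesteps the delicate point that as $u^*$ runs off to infinity along the egg the twisted kernel degenerates to a one-dimensional walk and $|Q^{u^*}|\to 0$.

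For the $y$-axis with $k$ near $\ell\tilde\lambda_2$ you face the same boundary issue ($d_z\to(1,0)\in\partial\mathcal C$) and, like the paper, defer to the appendix. You could instead recycle your own idea: for $k\le(1-\delta)\ell\tilde\lambda_2$ the direction of $w_z$ stays in a compact interior set and Ney--Spitzer applies uniformly; for $k$ closer to $\ell\tilde\lambda_2$ the $y$-coordinate of $w_z$ is at most $\delta\ell\tilde\lambda_2$, while reaching $x$-coordinate $\ell(\tilde\lambda_1-\tilde\mu)$ forces at least that many steps, during which the monotone $y$-coordinate of the twisted walk incurs a large-deviation cost to remain so small. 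This gives exponential decay of $\overline{\mathcal G}(0;w_z)$ directly and would make your argument fully self-contained without invoking the boundary extension.
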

\begin{proof}
From Theorem 2.2 in \cite{Ney-Spitzer},
$$\overline{G}^u((0,0);b^u(\ell))\sim \frac{1}{\sqrt{2\pi \ell}}[|Q^u|(m^u\cdot \Sigma^u m^u)]^{-1/2}$$ uniformly in $u$; i.e.
we can take $L$ sufficiently large so that
$$(1-\epsilon)<\overline{{\cal G}}((0,0);b^u(\ell))\frac{\sqrt{2\pi \ell}}{ [|Q^u|(m^u\cdot \Sigma^u m^u)]^{-1/2}}<(1+\epsilon)$$
for $\ell>L$ uniformly in $u$.
Now consider  $z\in \blacktriangle$ of the form $z=<a\ell>$ where $a=(a_1,a_2)$. Since $\overline{K}$ and  $\overline{\mathcal{K}}$ have no southern jumps,
$\overline{{\cal G}}(z;b^{\overline{u}}(\ell))=0$ if $a_1>1$.  If $a_1\leq 1$ then $p(\ell)=b^{\overline{u}}(\ell)-<a\ell>=<c\ell>$ is in the cone formed from the support of $\overline{K}$ or $\mathcal{K}$.

Pick $u^*$ such that $\mgf(u^*)=1$  and
 $d^{u^*}=\nabla \mgf(u^*)/\nabla \mgf(u^*)|$ is in direction $p(\ell)$ so $p(\ell)=<\ell |c| d^{u^*}>$. Moreover
$$\overline{\mathcal{G}}^{u^*}((0,0);p(\ell))\sim \frac{1}{\sqrt{2\pi |c|\ell}}[|\mathcal{C}^{u^*}|(m^{u^*}\cdot \Sigma^{u^*} m^{u^*})]^{-1/2}$$
where $\Sigma^{u^*}$ is the inverse of $\mathcal{C}^{u^*}$, the covariance matrix of the  random walk with kernel $\mathcal{K}^{u^*}$ and $|\mathcal{C}^{u^*}|$ is the determinant of $\mathcal{C}^{u^*}$.

Hence,
\begin{eqnarray*}
\overline{\mathcal{G}}((0,0);p(\ell))&=&\overline{G}^{\overline{u}}((0,0);p(\ell))=\overline{G}((0,0);p(\ell))e^{\overline{u}\cdot p(\ell)}\\
&=&e^{(\overline{u}-u^*)\cdot p(\ell)}\overline{G}^{u^*}((0,0);p(\ell))\\
&\leq& e^{(\overline{u}-u^*)\cdot p(\ell)}(1+\epsilon)\frac{1}{\sqrt{2\pi |c|\ell}}[|Q^{u^*}|(m^{u^*}\cdot \Sigma^{u^*} m^{u^*})]^{-1/2}
\end{eqnarray*}
as long as $|c|\ell>L$.
Again by the convexity of $D^{\mgf}$, $(\overline{u}-u^*)\cdot p(\ell)\leq 0$.
Moreover, since the support of $\overline{K}$ is finite we use Theorem \ref{ontheedge} to see probabilities $\overline{K}(0,v)e^{u\cdot v}$ where $u\in \partial D^{\mgf}$ form
a compact set. Hence $|Q^u|$ and $(m^u\cdot \Sigma^u m^u)$ are bounded away from zero and infinity (see Lemma 2.4 in \cite{Ney-Spitzer}).

Hence, for $z=<a\ell>\in \blacktriangle$,
\begin{eqnarray*}
\lefteqn{\frac{\overline{{\cal G}}(z;b^{\overline{u}}(\ell))}{\overline{{\cal G}}((0,0);b^{\overline{u}}(\ell))}=
\frac{\overline{\mathcal{G}}((0,0);p(\ell))}{\overline{{\cal G}}((0,0);b^{\overline{u}}(\ell))}}\\
&\leq&(1+\epsilon)\frac{1}{\sqrt{2\pi |c|\ell}}[|Q^{u^*}|(m^{u^*}\cdot \Sigma^{u^*} m^{u^*})]^{-1/2}(\frac{1}{\sqrt{2\pi \ell}}[|Q^{\overline{u}}(m^{\overline{u}}\cdot \Sigma^{\overline{u}} m^{\overline{u}})]^{-1/2})^{-1}\\
&\leq&(1+\epsilon)\frac{1}{\sqrt{|c|}}C
\end{eqnarray*}
where $C$ is a constant independent of $u^*$ or $\overline{u}$. Moreover,  for $z=<a\ell>\in \blacktriangle$ there is a minimum value of $|c|$
taken on the $x$ or $y$ axes. Hence the above bound is uniform in $z=<a\ell>\in \blacktriangle$.
\end{proof}

This gives a precise picture of a run-away queue on sheet~$1$. The paths to the boundary $x+y=\ell$ drift in direction $d^{\overline{u}}$.
Moreover $\pi(d^{\overline{u}}(\ell))$ is of order $\rho^{\ell}/\sqrt{\ell}$ while points on the boundary a distance $\epsilon \ell$
from $b^{\ell}$ have probability $\pi$ exponentially smaller in $\ell$. By symmetry there are analogues to Theorems \ref{one} and \ref{two} on sheet~2.
This partially explains the histogram \ref{histhits} of hits on level $\ell=1000$ of the ray-spiral case $\mu=.65$, $\lambda_1=.3$ and $\lambda_2=.05$ based on simulating $10^6$ busy
periods of the twisted chain.  There were no hits on sheet~2 at level $1000$. The mode is $x=(691,309)$ while the theory predicts $(690.48,309.52)$.
The histogram is not normal because all the mass is within one standard deviation. We have no theoretical prediction for the shape of the histogram.

\begin{figure}
\begin{center}
\includegraphics[width=0.9\textwidth]{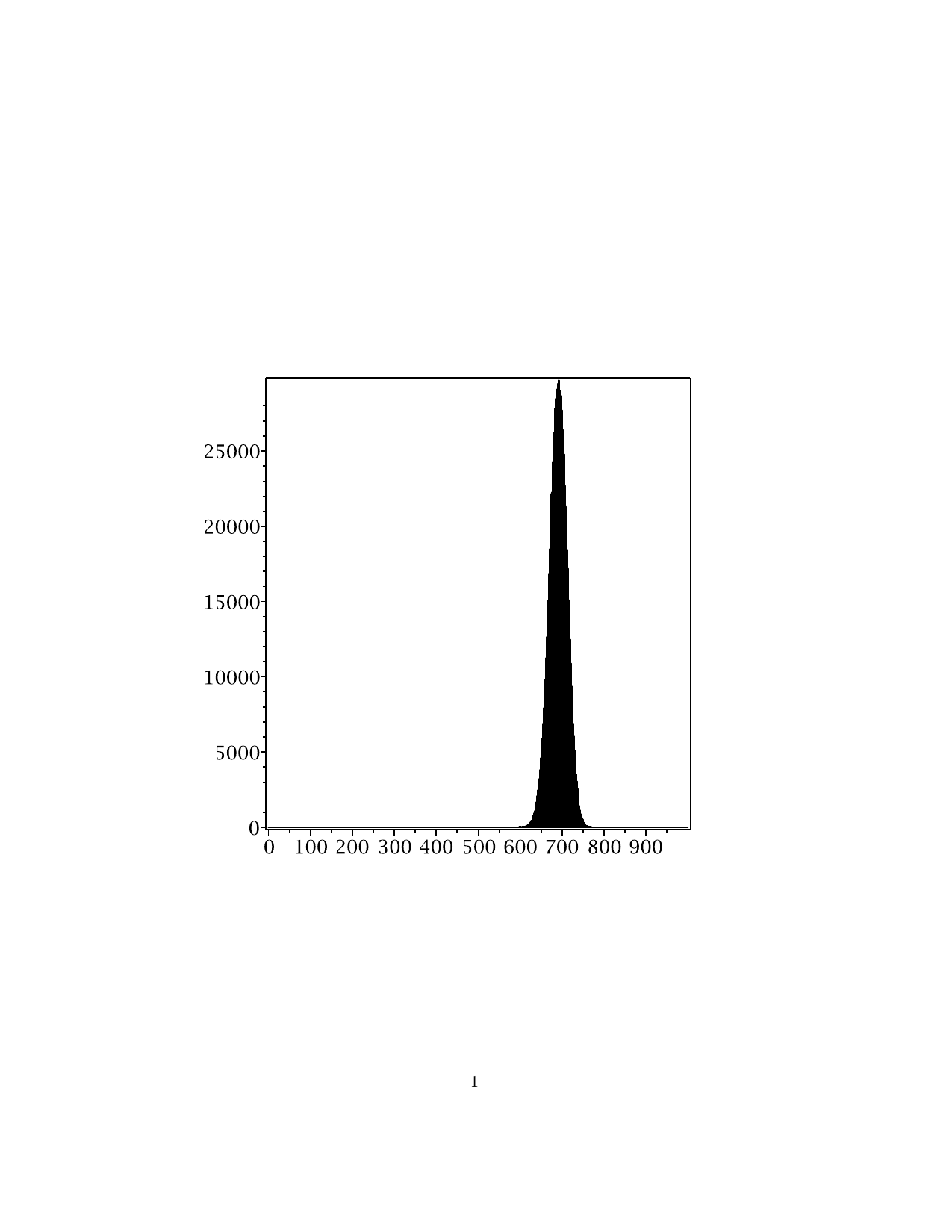}
\caption{Ray-Spiral case: $\lambda_1=0.3$, $\lambda_2=0.05$, $\mu=0.65$\label{histhits}}
\end{center}
\end{figure}

\section{The ray case: all directions}\label{ray-spiral}

We note that the methodology used in the ray case above can be generalized to any direction.
To find  $\pi(b^{u}(\ell))$ we must define $\blacktriangle$ to cut out the boundary and then check that
$\sum_{z\in\Delta}e^{u\cdot z}\pi(z)<\infty$ where $\Delta=S\cap \blacktriangle$. Also take $\alpha=\vec{0}=(0,0,\cdots ,0)$ and note that the function $h_{\alpha}(z)=(e^{u\cdot z}-1)/(e^{u\cdot b^{u}(\ell)}-1)$ is harmonic and
satisfies $h_{\alpha}(\vec{0})=0$ and $h_{\alpha}(b^{u}(\ell))=1$. Take $F=\{b^{u}(\ell)\}$  and apply the reasoning in Subsection \ref{pathrare}.
We get that for any path $(a_0,a_1,\ldots a_{T-1},a_{T})$ starting at $a_0=\alpha=(0,0,1)$ and entering $F$ for the first
time at $a_{T}$:
\begin{eqnarray*}
P_{\alpha}[M(n)=a_n, 1\leq n\leq T|\tau_F<\tau_{\alpha}]
=\mathcal{P}^{h_{\alpha}}_{\alpha}[\mathcal{M}(n)=a_n, 1\leq n\leq T];
\end{eqnarray*}
i.e. if the large deviation paths of the chain twisted by $h_{\alpha}$ lie in some set $H_{\ell}$ then
the same is true of the original chain conditioned that a large deviation occurs.

Consider the free kernel of our polling model on sheet~1 $\overline{K}$ on $\ZZ^2$. Since the support of $\overline{K}$ is $S_{\rho}=\{(1,0),(-1,0),(0,1)\}$ it follows
from Theorem \ref{goanyway} that for any direction $\beta=(\beta_1,\beta_2)$ such that $\beta_2\geq 0$ there exists a twist $u$ such that
$\overline{K}^u$ has mean $\mu^u$ in direction $\beta$.
In this section we evaluate the decay of $\pi(b^u(\ell))$ in all north-easterly directions.
We just assume there is a ray one sheet~1; i.e.  $\tilde{\lambda}_1-\tilde{\mu}>0$; i.e. $\sqrt{\mu\lambda_1}>\lambda$.
By twisting the free kernel $\overline{K}$ into the direction $\beta=(0,1)$ we will obtain what we call a bridge path.
Let $\exp(u_1x+u_2y)=\alpha_T^x\beta_T^y$. Find $\alpha_T$ and $\beta_T$ such that
$$\alpha_T\lambda_1+\beta_T\lambda_2+\alpha_T^{-1}\mu=1\mbox{ and }\alpha_T^{-1}\mu=\lambda_1\alpha_T.$$
Clearly $\alpha_T=\sqrt{\mu/\lambda_1}$ and
$\beta_T=(1-2\sqrt{\mu\lambda_1})/\lambda_2$. Note that $\beta_T$ is positive since $\lambda\mu\leq 1/4$ so $\mu\lambda_1<1/4$ and so $2\sqrt{\mu\lambda_1}<1$.

\begin{proposition}\label{determine}
$\alpha_T<\rho^{-1}$ if and only if sheet~1 is a ray.
\end{proposition}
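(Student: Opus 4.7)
The plan is to unfold both sides of the inequality into their explicit forms in terms of $\lambda_1$, $\lambda_2$ and $\mu$ and show directly that the resulting algebraic condition is identical to the ray condition \textbf{R1}.

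First I would write $\alpha_T = \sqrt{\mu/\lambda_1}$ (already established just above the statement) and $\rho^{-1} = \mu/\lambda$. Both numbers are strictly positive, so $\alpha_T < \rho^{-1}$ is equivalent to its square $\alpha_T^2 < \rho^{-2}$, namely
\[
\frac{\mu}{\lambda_1} \;<\; \frac{\mu^2}{\lambda^2}.
\]
Multiplying through by the positive quantity $\lambda_1 \lambda^2 / \mu$ yields the equivalent inequality $\lambda^2 < \mu \lambda_1$.

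Next I would compare this with condition \textbf{R1}. By definition the ray condition on sheet~1 reads $\tilde{\lambda}_1 > \tilde{\mu}$, and from the twisted kernel \eqref{free} we have $\tilde{\lambda}_1 = (\lambda_1/\lambda)\mu$ and $\tilde{\mu} = \lambda$. Hence $\tilde{\lambda}_1 > \tilde{\mu}$ is precisely $\mu \lambda_1 /\lambda > \lambda$, i.e. $\mu\lambda_1 > \lambda^2$, which matches the inequality derived in the previous paragraph. Running the chain of equivalences in both directions then gives the biconditional.

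There is really no obstacle here; the only thing to be careful about is the squaring step, which is safe because both $\alpha_T$ and $\rho^{-1}$ are positive reals, and the multiplication by $\lambda_1\lambda^2/\mu$, which is safe because $\lambda_1, \lambda, \mu > 0$ by assumption \textbf{K}. The proposition is thus essentially a one-line algebraic verification that the boundary of the "egg" $\bigmgf_1 = 1$ at $\alpha_T$ lies inside the point $(\rho^{-1},\rho^{-1})$ exactly when the drift $\tilde\lambda_1 - \tilde\mu$ of the twisted chain on sheet~1 is positive.
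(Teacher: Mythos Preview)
Your proof is correct and takes essentially the same approach as the paper: both reduce the inequality $\sqrt{\mu/\lambda_1} < \mu/\lambda$ to $\lambda^2 < \mu\lambda_1$ (the paper writes it as $\lambda < \sqrt{\mu\lambda_1}$) and identify this with condition (R1). The only cosmetic difference is that you square both sides first while the paper rearranges directly, but the argument is the same one-line algebraic check.
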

\begin{proof}
$\alpha_T=\sqrt{\mu/\lambda_1}<\rho^{-1}$ if and only if $\lambda<\sqrt{\mu\lambda_1}$; i.e. if and only there is a ray on sheet~1.
\end{proof}

\begin{figure}[t!]
\begin{center}
\includegraphics{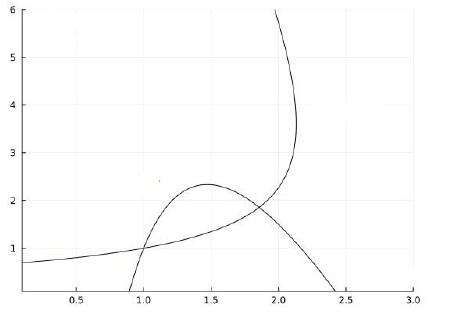}
\caption{Ray-Spiral case: $\lambda_1=6/20$, $\lambda_2=1/20$, $\mu=13/20$}
\end{center}
\begin{picture}(100,100)(-122,-6)
\put(80,215){\circle*{2}}
\put(85,215){$(\alpha_E,\beta_E)$}
\put(61,180){\circle*{2}}
\put(70,177){$(\rho^{-1},\rho^{-1})$}
\put(37,191){\circle*{2}}
\put(32,195){$(\alpha_T,\beta_T)$}
\put(72,191){\circle*{2}}
\put(78,189){$(\gamma_T,\beta_T)$}
\put(90,157){Curve~1}
\put(85,245){Curve~2}
\end{picture}
\end{figure}

Curve~1: $\lambda_1\alpha+\lambda_2\beta+\mu\alpha^{-1}=1$ is associated with sheet~1 and
Curve~2: $\lambda_1\alpha+\lambda_2\beta+\mu\beta^{-1}=1$ is associated with sheet~2. The points $(1,1)$ and  $(\rho^{-1},\rho^{-1})$ are the only intersection points on both curves.
 $(\alpha_T,\beta_T)$  is the top of the curve associated with sheet~1
 so $\beta_T>\rho^{-1}$.
 Moreover by Proposition \ref{determine},  $\alpha_T<\rho^{-1}$.

 Denote the right most point on Curve~2 by $(\alpha_E,\beta_E)$
 where
 \begin{eqnarray}\label{rightmost}
 \alpha_{E}=(1-2\sqrt{\mu\lambda_2})/(2\lambda_1)\mbox{ and }\beta_E=\sqrt{\mu/\lambda_2}
 \end{eqnarray}
  by the above argument. Since $(\rho^{-1},\rho^{-1})$ lies on Curve~2 it follows that
 $\alpha_E>\rho^{-1}>\alpha_T$. Since there are no other intersection points it follows that there is a point $(\gamma_T,\beta_T)$
 on Curve~2 where $\gamma_T>\rho^{-1}$. We use these points later.

\subsection{Computing $\pi(1,y,1)$ and $\pi(0,y,2)$}
In order to determine if a large deviation is a ray on the first sheet we need precise estimates of $\pi(x,0,1)$. We proceed in a roundabout manner.
 \subsubsection{A bridge to $(1,y,1)$}\label{upbridge}
We will now obtain the asymptotics of $\pi(1,y,1)$ as $y$ gets large. Intuitively the large deviation path to the point $(1,y,1)$ is a bridge which
skims above the $y$-axis. We take $\Delta=\{(x,0,1):x\geq 1\}$
and $\blacktriangle= \{(x,y):x\geq 1,y\geq 1\}^c$ inside $\ZZ^2$. We consider the free kernel on sheet~1 $\overline{K}$ to have a killing $K((1,y,1),(0,y,2)$ at points $(1,y)$
where the chain leaves sheet~1.
To twist the free kernel $\overline{K}$ to make a bridge path consider a {\em rough} $h_T$-transform where $h_T(x,y)=\alpha_T^x\beta_T^y$.

Let $\mathcal{K}_T$ denote the $h_T$-transformed kernel.  The Markovian part (the $x$-component) of $\mathcal{K}_T$ is $s=\hat{\mathcal{K}}_T(x,x)=\lambda_2\beta_T $ for $x\geq 1$ and
$$p=\hat{\mathcal{K}}_T(x,x+1)=\hat{\mathcal{K}}_T(x,x-1)=\sqrt{\mu\lambda_1}\mbox{ for } x\geq 2. $$
Hence $\hat{\mathcal{K}}_T(x,x-1)=p$ for $x\geq 2$. At $x=1$
$\hat{\mathcal{K}}_T(1,2)=\sqrt{\mu\lambda_1}$ but $\hat{\mathcal{K}}_T(1,0)=0$ since this is a transition off sheet~1.
Hence $\hat{\mathcal{K}}_T$ has a killing $\kappa=\sqrt{\mu\lambda_1}$ at points $(1,y)$.
The mean drift in the $y$-direction at $(x,y)$ for $x\geq 2$ is
$d_{+}=\lambda_2\beta_T$.

Note that $\hat{h}_0(x)=x$ is harmonic for $\hat{\mathcal{K}}_T$ and the associated $\hat{h}_0$-transform gives a Markovian kernel
satisfying the conditions of Theorem 1 in \cite{kesten}. Consequently $\hat{h}_0$ is unique harmonic function for $\hat{\mathcal{K}}_T$ and consequently $h(x,y)=\hat{h}_0(x)h_T(x,y)=x\alpha_T^x\beta_T^y$
is harmonic for $\overline{K}$ killed on the $y$-axis.

 Again the $h$-transform of $\overline{K}$ is denoted by $\overline{\mathcal{K}}$ which is associated with the twisted chain $\mathcal{M}$ and the associated potential
is $\overline{\mathcal{G}}$.
$\overline{\mathcal{K}}_{\blacktriangle}$ is the kernel  killed on hitting $\blacktriangle$.
Also note that $\sum_{x}x\alpha_T^{x}\pi(x,0,1)<\infty$ since $\alpha_T<\rho^{-1}$ and $\pi(x,0,1)\leq (1-\rho)\rho^x$.
Again recall the representation (\ref{firstrep}),
\begin{eqnarray}\label{firstrep3}
 \pi(1,y,1)&=&h(1,y)^{-1}\sum_{x\geq 0}h(x,0)\pi(x,0,1) {\cal G}_{\blacktriangle}((x,0);(1,y))\nonumber\\
 &=&\alpha_T\beta_T^{-y}\sum_{x}x\alpha_T^{x}\pi(x,0,1) \overline{{\cal G}}_{\blacktriangle}((x,0);(1,y)).
 \end{eqnarray}

Let $\gamma(x,0)=h_0(x)\alpha_T^{x}\pi(x,0,1)\chi_{\Delta}((x,0))$ and $\gamma_{\blacktriangle}(w)=\gamma \Pi^{\overline{{\cal K}}}_{\blacktriangle}(w)$
where $w\in\blacktriangle$ and $\Pi^{\overline{{\cal K}}}_{\blacktriangle}((x,0);w))$ is the probability of hitting $w$ if  the twisted chain started at $(x,0)$ enters $\blacktriangle$.
As in (\ref{therep}),
\begin{eqnarray*}
& &\gamma\overline{{\cal G}}_{\blacktriangle}(1,y)\\&=&\sum_x\gamma(x,0)\overline{{\cal G}}((x,0);(1,y))-\sum_x\gamma(x,0)\sum_{w\in\blacktriangle}\Pi^{\overline{{\cal K}}}_{\blacktriangle}((x,0);(w,0)))\overline{{\cal G}}((w,0);(1,y))\\
&=&\gamma  \overline{{\cal G}}(1,y)-  \gamma_{\blacktriangle}\overline{{\cal G}}(1,y) .
\end{eqnarray*}

We will  check below that, $y\to\infty$,
$\frac{\overline{{\cal G}}((x,0);(1,y))}{\overline{\mathcal{G}}((1,0);(1,y))}$ is uniformly bounded in $x$ and tends to $1$ as $y\to \infty$. If this is true then
$$\frac{\gamma  \overline{{\cal G}}(1,y)}{\overline{\mathcal{G}}((1,0);(1,y)}= \sum_{x}x\alpha_T^{x}\pi(x,0,1)\frac{\overline{\mathcal{G}}((x,0);(1,y))}{\overline{\mathcal{G}}((1,0);(1,y))}
\to \sum_{x}x\alpha_T^{x}\pi(x,0,1)$$ and
\begin{eqnarray*}
& &\frac{\gamma_{\blacktriangle} \overline{{\cal G}}(1,y) }{\overline{\mathcal{G}}((1,0);(1,y))}= \sum_{x}x\alpha_T^{x}\pi(x,0,1)\sum_{w\in\blacktriangle}\Pi^{\overline{{\cal K}}}_{\blacktriangle}((x,0);w))\frac{\overline{\mathcal{G}}(w;(1,y))}{\overline{\mathcal{G}}((1,0);(1,y))}\\
&\to& \sum_{x}x\alpha_T^{x}\pi(x,0,1)\mathcal{P}_{(x,0)}[\mathcal{M}\mbox{ hits }\blacktriangle].
\end{eqnarray*}
In which case,
\begin{proposition}\label{three}
If there is a ray on sheet~1 then
$$\pi(1,y,1)\sim\alpha_T^{-1}
\beta_T^{-y}\overline{\mathcal{G}}((1,0);(1,y)\sum_{x\in \Delta}x\alpha_T^{x}\pi(x,0,1) \mathcal{P}_{(x,0)}[\mathcal{M}\mbox{ never hits }\blacktriangle].$$
\end{proposition}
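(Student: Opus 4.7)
The plan is to start from representation (\ref{firstrep3}) together with the decomposition
$$\gamma\,\overline{{\cal G}}_{\blacktriangle}(1,y) \;=\; \gamma\,\overline{{\cal G}}(1,y)\;-\;\gamma_{\blacktriangle}\,\overline{{\cal G}}(1,y)$$
displayed just above the proposition, where $\gamma(x,0)=x\alpha_T^x\pi(x,0,1)\chi_{\Delta}((x,0))$. Dividing through by $\overline{\mathcal{G}}((1,0);(1,y))$ and letting $y\to\infty$, I aim to show that the first quotient tends to $\sum_x x\alpha_T^x\pi(x,0,1)$ and the second to $\sum_x x\alpha_T^x\pi(x,0,1)\,\mathcal{P}_{(x,0)}[\mathcal{M}\text{ hits }\blacktriangle]$. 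Subtracting and reinserting the prefactor $\alpha_T^{-1}\beta_T^{-y}\overline{\mathcal{G}}((1,0);(1,y))$ arising from $h(1,y)^{-1}$ then yields the asymptotic in the statement.

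For the pointwise limits I would invoke the Ney--Spitzer Martin boundary identification as extended in Section~\ref{rayraylab} (Corollary~1.3 of \cite{Ney-Spitzer} without the irreducibility hypothesis). The twist $u_T=(\ln\alpha_T,\ln\beta_T)$ lies on $\partial D^{\mgf}$, and a direct computation at $\alpha_T=\sqrt{\mu/\lambda_1}$ shows $\partial_1\mgf(u_T)=\lambda_1\alpha_T-\mu/\alpha_T=0$, so the mean direction $\nabla\mgf(u_T)/|\nabla\mgf(u_T)|$ is due north. Hence $(1,y)$ converges in the Martin topology to the boundary point associated with $u_T$, which gives
$$\frac{\overline{{\cal G}}(a;(1,y))}{\overline{{\cal G}}((1,0);(1,y))}\;\longrightarrow\;1$$
for each fixed $a\in\ZZ^2$ (and similarly with $w\in\blacktriangle$ in place of $a$), after the $e^{u_T\cdot(\cdot)}$ reweightings cancel between numerator and denominator in the $u_T$-representation $\overline{\mathcal{G}}=\overline{G}^{u_T}$.

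The main obstacle is upgrading these pointwise limits to a bound that is uniform in $x\in\Delta$ and in $w\in\blacktriangle$, so that dominated convergence applies to the infinite sums. I would mimic the proof of Proposition~\ref{uniformbound}: for each $(x,0)\in\Delta$ the displacement $(1-x,y)$ lies in the cone $\mathcal{C}$ generated by $\{(1,0),(-1,0),(0,1)\}$ and can be written as $\langle |c|y\cdot d^{u^*}\rangle$ for a suitable $u^*\in\partial D^{\mgf}$; Theorem~2.2 of \cite{Ney-Spitzer} then gives a local-limit estimate for $\overline{\mathcal{G}}((x,0);(1,y))$, convexity of $D^{\mgf}$ ensures $(u_T-u^*)\cdot(1-x,y)\le 0$ so the exponential reweighting is controlled, and compactness (via Theorem~\ref{ontheedge}) keeps $|Q^{u^*}|$ and $m^{u^*}\cdot\Sigma^{u^*}m^{u^*}$ bounded away from $0$ and $\infty$. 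A summable majorant for $\gamma(x,0)$ is $x\alpha_T^x(1-\rho)\rho^x$, finite because the ray condition together with Proposition~\ref{determine} gives $\alpha_T\rho<1$.

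With pointwise convergence and uniform domination in hand, dominated convergence yields
$$\frac{\gamma\,\overline{{\cal G}}(1,y)-\gamma_{\blacktriangle}\,\overline{{\cal G}}(1,y)}{\overline{\mathcal{G}}((1,0);(1,y))}\;\longrightarrow\;\sum_{x\in\Delta}x\alpha_T^x\pi(x,0,1)\bigl(1-\mathcal{P}_{(x,0)}[\mathcal{M}\text{ hits }\blacktriangle]\bigr),$$
and the bracketed quantity equals $\mathcal{P}_{(x,0)}[\mathcal{M}\text{ never hits }\blacktriangle]$. Multiplying by $\alpha_T^{-1}\beta_T^{-y}\overline{\mathcal{G}}((1,0);(1,y))$ completes the proof. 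Periodicity of $\overline{K}$ does not affect the leading constant, by the same bookkeeping used in the proof of Theorem~\ref{one}.
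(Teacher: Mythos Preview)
Your overall scaffolding --- start from (\ref{firstrep3}), use the Spitzer decomposition $\gamma\overline{\mathcal G}_{\blacktriangle}=\gamma\overline{\mathcal G}-\gamma_{\blacktriangle}\overline{\mathcal G}$, divide by $\overline{\mathcal G}((1,0);(1,y))$, and push the limit through --- is exactly the paper's plan. The gap is in how you justify the pointwise limit and the uniform bound.

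The identification ``$\overline{\mathcal G}=\overline G^{\,u_T}$'' is false. In this subsection $\overline K$ is \emph{not} the free random walk of Section~\ref{rayraylab}: it is the sheet-1 kernel \emph{killed} upon the transition $(1,y)\to(0,y)$, and the harmonic function used for the $h$-transform is $h(x,y)=x\,\alpha_T^x\beta_T^y$, not $e^{u_T\cdot(x,y)}=\alpha_T^x\beta_T^y$. The linear factor $x$ makes the twisted kernel $\overline{\mathcal K}$ position-dependent in $x$ (for instance $\overline{\mathcal K}((x,y),(x+1,y))=\sqrt{\mu\lambda_1}\,(x+1)/x$), so $\overline{\mathcal K}$ is not a random walk on $\ZZ^2$ and Ney--Spitzer's Corollary~1.3 and Theorem~2.2 simply do not apply to $\overline{\mathcal G}$. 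In particular the argument you copy from Proposition~\ref{uniformbound} breaks: that proof rests on the free local limit estimate $\overline G^{u}((0,0);b^u(\ell))\sim c_u\,\ell^{-1/2}$, which is unavailable for the killed/$h$-transformed chain (indeed here the correct decay is $y^{-3/2}$, cf.\ Theorem~\ref{threeplus}).

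The paper establishes the two needed ingredients in the proof of Theorem~\ref{threeplus} by completely different, bridge-specific tools: the pointwise limit $\overline{\mathcal G}((x,0);(1,y))/\overline{\mathcal G}((1,0);(1,y))\to1$ comes from Proposition~4 of \cite{FM:BridgesExact} (via Kesten's uniqueness of the harmonic function $\hat h_0(x)=x$), and the uniform bound $\overline{\mathcal G}((x,0);(1,y))\le\overline{\mathcal G}((1,0);(1,y))$ is obtained by a coupling/trapping argument --- the path from $(1,0)$ is squeezed between the $y$-axis and the path from $(x,0)$, so it must meet the latter and thereafter visits $(1,y)$ at least as often. An alternative route to the pointwise limit is Ignatiouk-Robert \cite{Irina} on the Martin boundary of a walk killed on a half-space, as the paper remarks after Theorem~\ref{threeplus}; but neither route is Ney--Spitzer for a free walk.
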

We have therefore stripped out the exponential decay $\beta_T^{-y}$ out of $\pi(1,y,1)$ leaving the polynomial decay $\overline{\mathcal{G}}((1,0);(1,y)$.

Next recall $$\overline{\mathcal{G}}((1,0);(1,y))=\mathcal{G}_T((1,0);(1,y)\frac{\hat{h}_0(1)}{\hat{h}_0(1)}=\mathcal{G}_T((1,0);(1,y).$$
$\mathcal{G}_T((1,0);(1,y)$ was analyzed analytically in \cite{FM:BridgesExact}. To see that requires relabeling or shifting the first quadrant so $(1,0)\to (0,0)$ and $((1,y)\to (0,y)$.
Denote the relabelled or shifted kernel by $\mathcal{K}_s$ and the associated potential by $\mathcal{G}_s$.
Hence $\mathcal{K}_s((x,y);((u,v))=\mathcal{K}_T((x+1,y);((u+1,v))$ and
$\mathcal{G}_T((1,0);(1,y)=\mathcal{G}_s((0,0);(0,y)$.
In these new coordinates the probability of killing $\kappa$ at any point $(0,y)$ is $\mathcal{K}_T((1,y);(0,y)=\mu \alpha_T^{-1}$
and $p_0:=\mathcal{K}_s((0,y);(1,y))=p$.
By Proposition 1 in \cite{FM:BridgesExact},
${\cal G}_s((0,0);(y,0)) \sim C_+ \,  y^{-3/2}$
where
$C_+ = \frac{p}{\kappa^2}\sqrt{\frac{d_+}{2 \pi (1 - s)}}$
\begin{theorem}\label{threeplus}
If $\mu\cdot(\lambda_1/\lambda)-\lambda>0$ then
$$\pi(1,y,1)\sim\alpha_T\beta_T^{-y}C_+   y^{-3/2}\sum_{x\in \Delta}x\alpha_T^{x}\pi(x,0,1) \mathcal{P}_{(x,0)}[\mathcal{M}\mbox{ never hits }\blacktriangle].$$
\end{theorem}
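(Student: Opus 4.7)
The plan is to assemble Theorem \ref{threeplus} directly from Proposition \ref{three} together with the bridge asymptotic of Proposition 1 in \cite{FM:BridgesExact}. Proposition \ref{three} already does the heavy lifting: it reduces $\pi(1,y,1)$ to the exponential factor $\beta_T^{-y}$ times the potential $\overline{\mathcal{G}}((1,0);(1,y))$ times a $y$-independent series. So once the potential on the bridge has been estimated, the result is a one-line substitution.

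First I would check that the hypothesis $\mu(\lambda_1/\lambda)-\lambda>0$ is exactly the ray condition R1, so that Proposition \ref{three} applies. Rewriting, $\mu\lambda_1>\lambda^2$, i.e.\ $\sqrt{\mu\lambda_1}>\lambda$, which is precisely $\tilde{\lambda}_1>\tilde{\mu}$. This also guarantees $\alpha_T=\sqrt{\mu/\lambda_1}<\rho^{-1}$ via Proposition \ref{determine}, so the summability of $\sum_x x\alpha_T^x\pi(x,0,1)$ required in Proposition \ref{three} is in force.

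Next I would identify $\overline{\mathcal{G}}((1,0);(1,y))$ with a bridge potential that is known explicitly. Since $\hat{h}_0(x)=x$ is harmonic for $\hat{\mathcal{K}}_T$ and $\hat{h}_0(1)/\hat{h}_0(1)=1$, one has $\overline{\mathcal{G}}((1,0);(1,y))=\mathcal{G}_T((1,0);(1,y))$. Shifting the quadrant so that $(1,\cdot)\mapsto(0,\cdot)$ converts this to $\mathcal{G}_s((0,0);(0,y))$ for the relabeled kernel $\mathcal{K}_s$. I would then read off the parameters of $\mathcal{K}_s$ from the description just before Proposition \ref{three}: interior step probability $p=\sqrt{\mu\lambda_1}$, self-loop $s=\lambda_2\beta_T$, killing rate $\kappa=\sqrt{\mu\lambda_1}$ at the new boundary $x=0$, $p_0=p$, and drift $d_+=\lambda_2\beta_T$ (in the $y$-direction). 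Plugging into Proposition~1 of \cite{FM:BridgesExact} gives
\[
\mathcal{G}_s((0,0);(0,y))\;\sim\;C_+\,y^{-3/2},\qquad
C_+=\frac{p}{\kappa^2}\sqrt{\frac{d_+}{2\pi(1-s)}}.
\]

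Finally I would substitute this asymptotic into Proposition \ref{three} and collect the prefactor $\alpha_T^{-1}\beta_T^{-y}$ (from $h(1,y)^{-1}$) with $C_+y^{-3/2}$ to obtain the stated expression for $\pi(1,y,1)$. The main obstacle, such as it is, is bookkeeping: one must verify that the hypotheses of Proposition~1 in \cite{FM:BridgesExact} (positive drift $d_+>0$, $0<s<1$, nontrivial killing $\kappa>0$, and the nearest-neighbor structure on the $x$-component) are all inherited by $\mathcal{K}_s$ under the ray assumption, and that the limit-passage justifying Proposition \ref{three}—uniform boundedness of the ratio $\overline{\mathcal{G}}((x,0);(1,y))/\overline{\mathcal{G}}((1,0);(1,y))$ in $x$ together with pointwise convergence to $1$—is indeed valid, paralleling Proposition \ref{uniformbound}. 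Once those routine checks are in place, the theorem follows by direct substitution.
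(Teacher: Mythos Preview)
Your overall architecture is exactly the paper's: invoke Proposition~\ref{three}, identify $\overline{\mathcal{G}}((1,0);(1,y))$ with the shifted bridge potential $\mathcal{G}_s((0,0);(0,y))$, apply Proposition~1 of \cite{FM:BridgesExact} to obtain $C_+\,y^{-3/2}$, and substitute. That part is fine.

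The gap is in what you label ``routine checks.'' The entire content of the paper's proof of Theorem~\ref{threeplus} is the verification that $\overline{\mathcal{G}}((x,0);(1,y))/\overline{\mathcal{G}}((1,0);(1,y))$ is uniformly bounded in $x$ and converges to $1$; Proposition~\ref{three} was stated conditionally on these facts. Moreover, the argument does \emph{not} parallel Proposition~\ref{uniformbound}. That proposition worked for the ray case via Ney--Spitzer's uniform Green-function asymptotics on the full plane; here you are in the bridge regime with a killed half-space walk, and those estimates are unavailable. The paper instead (i) handles periodicity by replacing $\overline{\mathcal{K}}$ with $\tfrac12(\overline{\mathcal{K}}+I)$, then invokes Proposition~4 of \cite{FM:BridgesExact} (a Kesten-type ratio limit theorem, using that $1$ is the unique harmonic function for $\hat{\mathcal{K}}$) to get pointwise convergence to $1$; and (ii) establishes uniform boundedness by a path-coupling argument from Section~7.3 of \cite{FM:BridgesExact}: a path of $\mathcal{M}$ started at $(1,0)$ is trapped between the $y$-axis and any path started at $(x,0)$ that reaches $(1,y)$, so the coupled path visits $(1,y)$ at least as often, giving $\overline{\mathcal{G}}((x,0);(1,y))\le\overline{\mathcal{G}}((1,0);(1,y))$.

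So your plan is correct but incomplete: you should replace the reference to Proposition~\ref{uniformbound} with the two bridge-specific ingredients above, and recognize that supplying them is the actual proof rather than a side verification.
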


\begin{proof}
We still need to check that, $y\to\infty$, for $x\in \Delta$,
$\frac{\overline{{\cal G}}((x,0);(1,y))}{\overline{\mathcal{G}}((1,0);(1,y)}$ is uniformly bounded in $x$ and tends to $1$ as $y\to \infty$.
First remark that $\overline{\mathcal{K}}$ has period $2$. We could proceed by taking $x$ odd and $y$ even but
it is easier to recall that we could just as well have redefined  $\overline{\mathcal{K}}$ as $\frac{1}{2}\overline{\mathcal{K}}+\frac{1}{2}I$.
This eliminates the periodicity problem so $\overline{\mathcal{K}}$ satisfies  Kesten's uniform aperiodicity property (1.5) in \cite{kesten}. Also that $1$ is the unique harmonic function for $\hat{\mathcal{K}}$
so Condition A5.5 holds. We may now apply Proposition 4 in \cite{FM:BridgesExact} to conclude
$\frac{\overline{{\cal G}}((x,0);(1,y))}{\overline{\mathcal{G}}((1,0);(1,y)}\to 1$ as $y\to \infty$.

To show $\frac{\overline{{\cal G}}((x,0);(1,y))}{\overline{\mathcal{G}}((1,0);(1,y)}$ is bounded uniformly bounded $x$ we use the argument in Section 7.3 in \cite{FM:BridgesExact}.
For paths $\omega'$ of $\mathcal{M}$ starting from
$(1,0)$ define $N_{(1,y)}(\omega')$ to be the number of visits
by $\mathcal{M}$ to $(1,y)$ following the trajectory $\omega'$.
 Similarly
define for paths $\omega$ of $\mathcal{M}$ starting from $(x,0)$
define $N_{(1,y)}(\omega)$ to be the number of hits at
$(\ell,0)$.

Now consider the product space of all paths $\omega$ of $\mathcal{M}$
which start from $(x,0)$ times paths $\omega'$ which start from
$(1,0)$. On this product space we can define a coupled path
starting from $(1,0)$ which follows $\omega'$ until $\omega'$ hits
the path $\omega$ and then follows $\omega$. Given a path $\omega$
which hits $(1,y)$, we note that all paths $\omega'$ must
hit the path $\omega$ because the path $\omega'$ is trapped
between the $y$-axis and the path $\omega$ (recall $\mathcal{K}((1,t),(0,t))=0$ for all $t$).
Define $N_{(1,y)}(\omega',\omega)$ to be the number of hits at
$(1,y)$ by the coupled path. Then, for any $\omega'$ and any
 $\omega$,
 $N_{(1,y)}(\omega)\leq N_{(1,y)}(\omega',\omega)$.
But $E[N_{(1,y)}(\omega',\omega)]=E[N_{(1,y)}(\omega')]$. Hence
$E[N_{(1,y)}(\omega)] \leq E[N_{(1,y)}(\omega')]$; i.e.
$$\overline{{\cal G}}((x,0);(1,y))\leq \overline{{\cal G}}((1,0);(1,y)).$$

\end{proof}

In the above we have followed the argument in \cite{FM:BridgesExact}.
 Instead we could have used the much more general argument in \cite{Irina}
to check that, as $y\to\infty$,
$\frac{\overline{{\cal G}}((x,0);(1,y))}{\overline{\mathcal{G}}((1,0);(1,y))}$  tends to $1$ as $y\to \infty$.
In \cite{Irina} the Martin boundary of is obtained for a killed random walk on a half-space. This is exactly what we need in our special case since falling off sheet~1 is equivalent to killing.
Moreover in our special nearest-neighbour case, the function $h_{a,+}(z)=h_{a,+}(x,y)$ defined before Theorem 1 in \cite{Irina}, reduces to $x\exp(a\cdot z)=x\alpha_T^x\beta_T^y$.
when $a\in \partial_0D=(\ln(\alpha_T),\ln(\beta_T))$; i.e. it reduces to  the harmonic function we used in the bridge case.

\subsubsection{Extending to $\pi(0,y,2)$}\label{labcascade}
Henceforth in this section we assume there is a ray on sheet~1; i.e. $\mu\cdot(\lambda_1/\lambda)-\lambda>0$.
We can bootstrap the asymptotics of $\pi(1,y,1)$ to obtain the asymptotics of $\pi(0,y,2)$.
It suffices to consider
\begin{eqnarray}
\pi(0,y,2)&=&\mu \pi(1,y,1)+\lambda_2 \pi(0,y-1,2)+\mu \pi(0,y+1,2)\mbox{ for }y\geq1 \label{equilibrium1}
\end{eqnarray}
where we recall $\pi(0,0,2)=0$ by definition so $\pi(0,0,1)=(1-\rho)$.
This immediately yields the lower bound $\pi(0,y,2)\geq \mu \pi(1,y,1)$.

On the other hand, we can define $\Pi_2(z)=\sum_{y=1}^{\infty}z^y\pi(0,y,2)$ and $\Pi_1(z)=\sum_{y=1}^{\infty}z^y\pi(1,y,1)$.
Multiplying (\ref{equilibrium1}) by $z^y$ and summing from $y=1$ we get
\begin{eqnarray*}
\Pi_2(z)&=&\mu \Pi_1(z)+\lambda_2 z \sum_{y=1}^{\infty}z^{y-1}\pi(0,y-1,2)+\frac{\mu}{z}\sum_{y=1}^{\infty}z^{y+1}\pi(0,y+1,2)\\
&=&\mu \Pi_1(z)+\lambda_2 z \Pi_2(z)+\frac{\mu}{z}(\Pi_2(z)-z\pi(0,1,2))
\end{eqnarray*}
so
\begin{eqnarray}\label{swap}
\Pi_2(z)&=&-\frac{\mu z}{\lambda_2 z^2-z+\mu}\left(\Pi_1(z)-\pi(0,1,2)\right).
\end{eqnarray}

We wish to show (\ref{swap}) is analytic on a disk of radius greater than $\beta_T$ except for a singularity at $\beta_T$.
Now the left hand side above is analytic at least in the ball of radius $\rho^{-1}$ since $\pi(0,y,2)\leq (1-\rho)\rho^y$
so the right hand side is also.
$\lambda_2 z^2-z+\mu=0$ has real roots since $\lambda_2\mu<\lambda\mu<1/4$ using $\lambda+\mu=1$ and stability. The root
$(1-\sqrt{1-4\mu\lambda_2}/(2\lambda_2)$  is less than $1$ if and only if
$1-2\lambda_2<\sqrt{1-4\mu\lambda_2}$; i.e. if and only if $\lambda_2+\mu<1$ and this is always true.
Hence the pole at $(1-\sqrt{1-4\mu\lambda_2}/(2\lambda_2)$ must then cancel a factor in the numerator.

On the other hand the root $(1+\sqrt{1-4\mu\lambda_2}/(2\lambda_2)$ is greater than $\beta_T$ if and only if
\begin{eqnarray}\label{equiv}
\sqrt{1-4\mu\lambda_2}+4\sqrt{\mu\lambda_1}-1&>&0.
\end{eqnarray}
For fixed $\lambda_1\in [0,1/2]$. the above function is decreasing as we see by setting $\mu=1-\lambda_1-\lambda_2$ and by taking the partial derivative in $\lambda_2$.
Since we have a ray on sheet~1, $\lambda_1\rho^{-1}>\lambda$; i.e. $f(\lambda_2)>1$ where
\begin{eqnarray}\label{dependson}
f(\lambda_2)&=&\lambda_1(1-\lambda_1-\lambda_2)/(\lambda_1+\lambda_2)^2.
\end{eqnarray}
Moreover, for fixed $\lambda_1$, $f(\lambda_2)$ is decreasing in $\lambda_2$ again by taking partial derivatives.
Since $f(0)=(1-\lambda_1)/\lambda_1\geq 1$ for $0\leq \lambda_1\leq 1/2$ it follows we can define
$\lambda_2^*=g(\lambda_1)$ be the unique value such that $f(\lambda_2^*)=1$.
Hence $\lambda_1(1-\lambda_1-\lambda_2^*)=(\lambda_1+\lambda^*_2)^2$ that is, finding the positive root,
$\lambda_2^*=(-3\lambda_1+\sqrt{\lambda_1^2+4\lambda_1})/2$.
We conclude that (\ref{equiv}) holds if
it holds on the curve $(\lambda_1,\lambda_2^*)$.

Now (\ref{equiv}) certainly holds if $4\sqrt{\mu\lambda_1}-1\geq 0$ since $\sqrt{1-4\mu\lambda_2}>0$ if $\lambda_1>0$.
Alternatively assume  $4\sqrt{\mu\lambda_1}-1< 0$. (\ref{equiv}) holds if and only if
\begin{eqnarray*}
1-4\mu\lambda_2^*&>&(1-4\sqrt{\mu\lambda_1})^2\mbox{ or equivalently if and only if}\\
2\sqrt{\mu\lambda_1}&>&\lambda_2^*\mu+4\mu\lambda_1\mbox{ or equivalently if and only if}\\
2\sqrt{\mu\lambda_1}&>&\frac{(-3\lambda_1+\sqrt{\lambda_1^2+4\lambda_1})}{2}\mu+4\mu\lambda_1\mbox{ or equivalently if and only if}\\
4&>&5\sqrt{\mu\lambda_1}+\sqrt{\lambda_1+4}.
\end{eqnarray*}
The latter inequality is certainly true because $\sqrt{\mu\lambda_1}\leq 1/4$ and $\sqrt{\lambda_1+4}\leq \sqrt{4.5}$
and $5/4+\sqrt{4.5}<5$. We conclude that (\ref{equiv}) holds.

By Theorem \ref{threeplus}, $\Pi_1(z)$ has radius of convergence $\beta_T$.  We shift the pole in $\Pi_1(z)$ to $1$
by defining $w=z/\beta_T$ to give
\begin{eqnarray}\label{swap2}
\Pi_2(\beta_T w)&=&-\frac{\mu \beta_T w}{\lambda_2 (\beta_T w)^2-\beta_T w+\mu}\left(\Pi_1(\beta_T w)-\pi(0,1,2)\right).
\end{eqnarray}
The right hand side of the above is analytic in $w$ in the disk of radius greater than $1$ except for the singularity at $1$.
To calculate the asymptotics of $\pi(0,y,2)$ we use the results in \cite{Odd}. Hence, as $w\to 1$,
$$\Pi_2(\beta_T w)\sim -\frac{\mu \beta_T }{\lambda_2 \beta_T^2-\beta_T +\mu}\left(\Pi_1(\beta_T w)-\pi(0,1,2)\right).$$
 Note that since $\beta_T$ is between the roots of $\lambda_2 z^2-z+\mu=0$ so
$\lambda_2 \beta_T^2-\beta_T+\mu<0$. Consequently the coefficient of (\ref{swap2}) is positive and modulo this positive constant the asymptotics of
$\pi(0,y,2)$ are the same as $\pi(1,y,1)$.

\begin{corollary}\label{goodenough}
If $\mu\cdot(\lambda_1/\lambda)-\lambda>0$  then as $y\to\infty$,
$\pi(0,y,2)\sim C_1\beta_T^{-y}   y^{-3/2}$ where $C_1$ is a fixed constant.
\end{corollary}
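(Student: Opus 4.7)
The plan is to apply singularity analysis (\cite{Odd}) to the generating function identity (\ref{swap2}) so as to transfer the $y^{-3/2}\beta_T^{-y}$ coefficient asymptotic for $\pi(1,y,1)$ supplied by Theorem~\ref{threeplus} over to $\pi(0,y,2)$. Set $R(w) := -\mu\beta_T w / (\lambda_2\beta_T^2 w^2 - \beta_T w + \mu)$ and $\widetilde{\Pi}_1(w) := \Pi_1(\beta_T w) - \pi(0,1,2)$, so that (\ref{swap2}) reads $\Pi_2(\beta_T w) = R(w)\,\widetilde{\Pi}_1(w)$.

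The first step is to localize the dominant singularity of the right-hand side at $w=1$. The rational factor $R(w)$ has two real poles $w_{-}<1<w_{+}$, the inequality $w_{+}>1$ being exactly (\ref{equiv}). The pole at $w_{-}$ is spurious, because $\Pi_2(\beta_T w)$ is analytic in $|w|<\rho^{-1}/\beta_T$ (using $\pi(0,y,2)\le(1-\rho)\rho^y$), a disk that contains $w_{-}$; hence the pole of $R$ at $w_{-}$ must be cancelled by a zero of $\widetilde{\Pi}_1$. After cancellation, $R$ is analytic on $\{|w|<w_{+}\}$ and $R(1)=-\mu\beta_T/(\lambda_2\beta_T^2-\beta_T+\mu)>0$, the positivity being the sign observation already recorded in the text. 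Consequently the only singularity of $R\,\widetilde{\Pi}_1$ on the closed unit disk is the one inherited from $\widetilde{\Pi}_1$ at $w=1$.

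Next I would translate Theorem~\ref{threeplus}, which supplies $\pi(1,y,1)\beta_T^y\sim A\,y^{-3/2}$ for an explicit constant $A>0$, into the local expansion $\widetilde{\Pi}_1(w) = a_0 - 2\sqrt{\pi}\,A\,(1-w)^{1/2} + o((1-w)^{1/2})$ as $w\to 1$ in a $\Delta$-domain, where $a_0 = \Pi_1(1)-\pi(0,1,2)$. Since $R$ is analytic at $1$, multiplying yields $\Pi_2(\beta_T w) = R(1)\,a_0 - 2\sqrt{\pi}\,R(1)\,A\,(1-w)^{1/2} + o((1-w)^{1/2})$, and the transfer theorem in the reverse direction reads off $[w^y]\Pi_2(\beta_T w) \sim R(1)\,A\,y^{-3/2}$, i.e.\ $\pi(0,y,2)\sim C_1\beta_T^{-y}y^{-3/2}$ with $C_1 = R(1)\,A > 0$.

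The main obstacle is justifying the analytic continuation of $\widetilde{\Pi}_1$ to a $\Delta$-domain at $w=1$, which is needed by the transfer theorem but is not automatic from the coefficient asymptotic alone. I would discharge this by going back to the explicit bridge representation $\overline{\mathcal{G}}((1,0);(1,y)) = \mathcal{G}_s((0,0);(0,y))$ of Section~\ref{upbridge}: its generating function in $y$ is an algebraic function (as implicit in the derivation of $C_+$ in \cite{FM:BridgesExact}) and therefore extends meromorphically past the unit circle, with the only singularity on $|w|=1$ being an algebraic branch point at $w=1$. A Darboux-type argument applied to this explicit form is available as a fallback should one wish to avoid the full $\Delta$-domain hypothesis.
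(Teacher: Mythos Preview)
Your proposal is correct and follows essentially the same route as the paper: both arguments apply the Flajolet--Odlyzko transfer theorems from \cite{Odd} to the identity \eqref{swap2}, using that the rational factor is analytic at $w=1$ (via \eqref{equiv}) and that the spurious pole at the smaller root is removable, so the coefficient asymptotics of $\pi(0,y,2)$ inherit those of $\pi(1,y,1)$ from Theorem~\ref{threeplus} up to the positive constant $R(1)$. If anything you are more explicit than the paper about the $\Delta$-domain hypothesis needed for the transfer, and your proposed fix via the algebraic nature of the bridge generating function from \cite{FM:BridgesExact} is the natural one; one minor slip is that $a_0$ should be $\Pi_1(\beta_T)-\pi(0,1,2)$ rather than $\Pi_1(1)-\pi(0,1,2)$.
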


\subsection{The Cascade case}
There are two distinct ways to reach $(x,1,2)$. When $\beta_T\leq \beta_E$ we have cascade paths. When  $\beta_T> \beta_E$ we have cascade paths.
 We now estimate the asymptotics of
$\pi(x,0,1)$ in the cascade case. Therefore, in addition to our  assumption of a ray on sheet~1 in this subsection, we  assume $\beta_T\leq \beta_E$. In the cascade case
the large deviation path to $(x,1,2)$ first climbs the $y$-axis and then cascades across to $(x,1,2)$.

\subsubsection{Extending to $\pi(x,1,2)$  in the Cascade case}

We use the technique developed in \cite{Ivo} and \cite{Jesse}.
First we find a   measure $\psi$ which is invariant on the second sheet  of the form $\psi(x,y)=\psi_0(y)\gamma_T^{-x}\beta_T^{-y}$
where $\psi(x,0)=0$. $\psi(0,y)$ has roughly the same asymptotics as $\pi(0,y,2)$.
The associated time reversal $\overleftarrow{M}$ with respect to $\psi$ has kernel $\overleftarrow{K}$
and we note $\overleftarrow{K}((x,1),(x,0))=0$ because $\psi(x,0)=0$. Moreover $\overleftarrow{M}$ has negative drift; i.e.
$\overleftarrow{K}((x,y),(x-1,y))=\gamma_T^{-1}\lambda_1$.

If $\psi$ is invariant on sheet~2 at $(x,y)$ then
\begin{eqnarray*}
\lefteqn{\sum_{s,t}\overleftarrow{K}((x,y),(s,t))\frac{\pi(s,t)}{\psi((s,t)}=\sum_{s,t}\frac{\psi((s,t)}{\psi(x,y)} \overline{K}((s,t),(x,y))\frac{\pi(s,t)}{\psi((s,t)}}\\
&=&\frac{1}{\psi(x,y)}\sum_{s,t}\pi(s,t) \overline{K}((s,t),(x,y))=\frac{\pi(x,y)}{\psi(x,y)}
\end{eqnarray*}
so $\pi/\psi$ is harmonic for $\overleftarrow{K}$ and  $\pi(\overleftarrow{M}(n))/\psi(\overleftarrow{M}(n))$ is a martingale.

Let $\Delta=\{(0,y,2):y\geq 1\}$.
Recall
\begin{eqnarray*}
& &\pi(x,1,2)\\
&=&\sum_{y}\pi(0,y,2)E_{(x,1,2)}[\mbox{\# visits to $(x,1,2)$ before $M$ returns to $\Delta$} ].
\end{eqnarray*}
If we redefine $M$ as having a killing at $(x,1,2)$ if there is a transition to $(x,0,1)$, this will not change the above representation.
Now do the time reversal with respect to $\psi$ starting at $(x,1,2))$ or use the martingale property of $\pi/\psi$ to get
\begin{eqnarray}\label{beautiful}
\frac{\pi(x,1,2)}{\psi(x,1,2)}&=&E_{(x,1,2)}[\frac{\pi(\overleftarrow{M}(\tau))}{\psi(\overleftarrow{M}(\tau))}]
\end{eqnarray}
where $\tau$ is the first time $\overleftarrow{M}$ hits $\Delta$.

We now show $\psi$ actually exists and that
$\frac{\pi(0,y,2)}{\psi(0,y,2)}=\frac{\pi(0,y,2)}{\psi_0(y)\beta_T^{-y}}$ is bounded.
We pick $\gamma_T$ so
\begin{eqnarray}
\gamma_T\lambda_1+\beta_T^{-1}\mu+\beta_T\lambda_2=1.\label{sumstoone}
\end{eqnarray}
In order that $\psi$ be invariant it must satisfy
$$\psi_0(y)\gamma_T^{-x}\beta_T^{-y}=\psi_0(y)\gamma_T^{-(x-1)}\beta_T^{-y}\lambda_1+\psi_0(y+1)\gamma_T^{-x}\beta_T^{-(y+1)}\mu+\psi_0(y-1)\gamma_T^{-x}\beta_T^{-(y-1)}\lambda_2;$$
that is
$$\psi_0(y)=\gamma_T\lambda_1\psi_0(y)+\beta_T^{-1}\mu\psi_0(y+1)+\beta_T\lambda_2\psi_0(y-1).$$
Because of (\ref{sumstoone}), $\psi_0$ is harmonic for the random walk with probability transition kernel
$$\hat{\mathcal{K}}^T(y,y)=s=\gamma_T\lambda_1, \hat{\mathcal{K}}^T(y,y+1)=u=\beta_T^{-1}\mu,\hat{\mathcal{K}}^T(y,y-1)=d=\beta_T\lambda_2.$$
Up to constants there is a unique positive solution with $\psi_0(0)=0$. This follows as a consequence of the nearest neighbour character of $\hat{\mathcal{K}}^T$. Knowing $\psi_0(0)=0$ and the value of $\psi_0(1)$ allows us to iteratively
determine all of $\psi_0$.

By inspection the unique positive solution with $\psi_0(0)=0$ is $\psi_0(y)=1-(d/u)^y$ if $d<u$.
But  $d<u$ means
$\beta_T\lambda_2<\beta_T^{-1}\mu$; i.e.
$$\beta_T<\sqrt{\frac{\mu}{\lambda_2}} =\beta_E$$
and this is true by hypothesis.
 Using Corollary \ref{goodenough},
$$\frac{\pi(0,y,2)}{\psi(0,y,2)}=\frac{\pi(0,y,2)}{\psi_0(y)\beta_T^y}\sim C_1  y^{-3/2}.$$
If $d=u$ then the unique solution up to constants is $\psi_0(y)=y$. Hence,
$$\frac{\pi(0,y,2)}{\psi(0,y,2)}=\frac{\pi(0,y,2)}{\psi_0(y)\beta_T^y}\sim C_1  y^{-5/2}.$$

 $\overleftarrow{M}$ drifts north-west if $d>u$ or west if $d=u$. Consequently $\overleftarrow{M}(\tau)$ is distributed higher up the $y$-axis as the starting point
$(x,1,2)$ tends to infinity. Define $C(x)=E_{(x,1,2)}[C_1  (\overleftarrow{M}(\tau))^{-3/2}]$ if $d<u$ or
$C(x)=E_{(x,1,2)}[C_1  (\overleftarrow{M}(\tau))^{-5/2}]$ if $d=u$. Either way $C(x)$ tends to zero at a polynomial rate as $x\to\infty$.
Consequently from (\ref{beautiful}) we conclude
\begin{theorem}\label{goodenough2}
If $\mu\cdot(\lambda_1/\lambda)-\lambda>0$  and $\beta_T\leq\beta_E$ then as $x\to\infty$,
$\pi(x,1,2)\sim C(x)\beta_T^{-1} \gamma_T^{-x}  $ where $C(x)\to 0$ as $x\to\infty$.
\end{theorem}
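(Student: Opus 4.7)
The plan is to derive the claim from the martingale/time-reversal identity \eqref{beautiful}. Since
\[
\psi(x,1,2)=\psi_0(1)\,\gamma_T^{-x}\beta_T^{-1},
\]
\eqref{beautiful} rewrites as
\[
\pi(x,1,2)=\psi_0(1)\,\beta_T^{-1}\gamma_T^{-x}\,E_{(x,1,2)}\!\left[\frac{\pi(\overleftarrow M(\tau))}{\psi(\overleftarrow M(\tau))}\right],
\]
so the whole statement reduces to setting
\[
C(x):=\psi_0(1)\,E_{(x,1,2)}\!\left[\frac{\pi(\overleftarrow M(\tau))}{\psi(\overleftarrow M(\tau))}\right]
\]
and showing $C(x)\to 0$ as $x\to\infty$.

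Next I would analyse the reverse chain $\overleftarrow M$ on sheet~2. Because $\overleftarrow K$ is nearest-neighbour and its only $x$-move is westward at positive rate $\gamma_T\lambda_1$, the chain performs exactly $x$ west steps before reaching $\Delta=\{(0,y,2):y\ge 1\}$, so $\tau<\infty$ almost surely. The vertical motion is a nearest-neighbour walk with up/down rates proportional to $u=\beta_T^{-1}\mu$ and $d=\beta_T\lambda_2$, and the cascade hypothesis $\beta_T\le\beta_E$ is exactly $d\le u$. Writing $\overleftarrow M(\tau)=(0,Y_x,2)$, the non-negative vertical drift gives $Y_x\to\infty$ in probability: by the SLLN, $Y_x/x\to (u-d)/(\gamma_T\lambda_1)$ almost surely in the strict case $d<u$; in the critical case $d=u$ an invariance-principle argument gives that $Y_x/\sqrt{x}$ is tight with a non-degenerate limit.

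Third, apply Corollary \ref{goodenough} to the function $f(y):=\pi(0,y,2)/\psi(0,y,2)=\pi(0,y,2)\beta_T^{y}/\psi_0(y)$. Since $\psi_0(y)\to 1$ in the subcritical case and $\psi_0(y)=y$ in the critical case, the corollary gives $f(y)\sim C_1 y^{-3/2}$ (respectively $C_1 y^{-5/2}$); in particular $f(y)\to 0$ as $y\to\infty$, and combined with finiteness on every initial segment this makes $f$ bounded on $\{y\ge 1\}$. Bounded convergence applied to $f(Y_x)$ then yields
\[
C(x)=\psi_0(1)\,E_{(x,1,2)}[f(Y_x)]\longrightarrow 0,
\]
completing the proof.

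The main obstacle is the critical case $d=u$. There the vertical drift is zero and the relevant harmonic function $\psi_0(y)=y$ grows only polynomially, so one must argue more carefully that (i) $Y_x\to\infty$ in distribution via a functional CLT for the reverse walk pointed at $(x,1,2)$, and (ii) the optional-stopping step behind \eqref{beautiful} is legitimate, i.e.\ that the martingale $\pi(\overleftarrow M(\cdot))/\psi(\overleftarrow M(\cdot))$ is uniformly integrable up to $\tau$. In the subcritical case $d<u$ both points are routine: the positive vertical drift forces $Y_x\asymp x$, and uniform integrability follows from the polynomial decay of $f$ against the bounded $\psi_0$.
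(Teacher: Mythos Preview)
Your approach is essentially the same as the paper's: both arguments start from the time-reversal identity \eqref{beautiful}, invoke Corollary~\ref{goodenough} to see that $\pi(0,y,2)/\psi(0,y,2)$ decays polynomially, observe that the reverse chain $\overleftarrow{M}$ started at $(x,1,2)$ hits $\Delta$ at a height $Y_x$ that tends to infinity, and conclude that the expectation defining $C(x)$ tends to zero. The paper is terser---it simply writes $C(x)=E_{(x,1,2)}[C_1(\overleftarrow M(\tau))^{-3/2}]$ (or the $-5/2$ power when $d=u$) and asserts this vanishes at a polynomial rate---while you spell out the SLLN/invariance-principle step and flag the uniform-integrability issue behind \eqref{beautiful}. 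One small imprecision in your write-up: the vertical rates of $\overleftarrow M$ are not exactly $u$ and $d$ but $u\,\psi_0(y{+}1)/\psi_0(y)$ and $d\,\psi_0(y{-}1)/\psi_0(y)$; these ratios tend to $1$ in the subcritical case, and in the critical case $\psi_0(y)=y$ they give the Bessel-$3$-like bias that forces $Y_x\to\infty$. This does not affect your conclusion.
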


\subsubsection{Extending to $\pi(x,0,1)$}\label{extension-xonone}
By the method used in Corollary \ref{goodenough} we remark that using the equilibrium equation
$$\pi(x,0,1)=\lambda_1\pi(x-1,0,1)+\mu\pi(x+1,0,1)+\mu\pi(x,1,2)$$ and multiplying by $z^x$ and summing from $x=1$ to infinity we get
$$\Pi_1(z)=\lambda_1 z(\Pi_1(z)+\pi(0,0,1))+\frac{\mu}{z}(\Pi_1(z)-z\pi(1,0,1))+\mu\Pi_2(z)$$
where $\Pi_1(z)=\sum_{x=1}^{\infty}z^x\pi(x,0,1)$ and $\Pi_2(z)=\sum_{x=1}^{\infty}z^x\pi(x,1,2)$.
Hence
$$\Pi_1(z)(1-\lambda_1 z-\mu/z)=\mu\Pi_2(z)+\lambda_1 z\pi(0,0,1)-\mu \pi(1,0,1) \mbox{ or }$$
\begin{eqnarray}\label{doitagain}
\Pi_1(z)&=&-\frac{z}{\lambda_1 z^2-z+\mu}(\mu\Pi_2(z)+\lambda_1 z\pi(0,0,1)-\mu \pi(1,0,1)).
\end{eqnarray}

The smaller root of $\lambda_1 z^2-z+\mu=0$ is at
$r_{-}=(1-\sqrt{1-4\mu\lambda_1}/(2\lambda_1)$ which is less than $1$ if and only if
$1-2\lambda_1<\sqrt{1-4\mu\lambda_1}$ and this is always true.
Hence the pole at $(1-\sqrt{1-4\mu\lambda_1}/(2\lambda_1)$ must then cancel a factor in the numerator.

On the other hand the root $r_{+}=(1+\sqrt{1-4\mu\lambda_1}/(2\lambda_1)$ is greater than $\alpha_E$ if and only if
\begin{eqnarray*}
 \frac{(1+\sqrt{1-4\mu\lambda_1}}{2\lambda_1}>\frac{1-2\sqrt{\lambda_2\mu}}{2\lambda_1}
\end{eqnarray*}
and this is always true - just square both sides and simplify. Hence
\begin{eqnarray}\label{orderofroots}
 r_{-}<1<\rho^{-1}<\gamma_T<\alpha_E<r_{+}.
\end{eqnarray}

 By Theorem \ref{goodenough2},
$\Pi_2(z)$ has radius of convergence $\gamma_T$ in the cascade case and we shall see later in Theorem \ref{four}, $\Pi_2(z)$ has radius of convergence $\alpha_E$ in the bridge case.
Hence the right hand side of (\ref{doitagain}) is analytic in a disk of
radius $\gamma_T$ in the cascade case and $\alpha_E$ in the bridge case. As in Subsection \ref{labcascade}, we shift the singularity by taking either
$w=z/\gamma_T$ or $w=z/\alpha_E$  and apply the results in \cite{Odd} to conclude $\pi(x,0,1)$ has the same asymptotics as
$\pi(x,1,2)$ multiplied by the constant
$-\gamma_T/(\lambda_1 \gamma_T^2-\gamma_T+\mu)$ in the cascade case
and by the constant
$-\alpha_E/(\lambda_1 \alpha_E^2-\alpha_E+\mu)$ in the bridge case. We remark that these constants are indeed positive because
$\alpha_E$ and $\gamma_T$ lie between the roots of $\lambda_1 z^2-z+\mu=0$  because  of (\ref{orderofroots}).
We have established
\begin{theorem}\label{goodenough3}
If $\tilde{\lambda}_1-\tilde{\mu}>0$  then as $x\to\infty$,
$\pi(x,0,1)\sim C_2\pi(x,1,2)$. where $C_2$ is a constant.
\end{theorem}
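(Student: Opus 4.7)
The plan is to mirror the bootstrapping argument used in Corollary \ref{goodenough} and Subsection \ref{extension-xonone}, but with the roles of queues~$1$ and~$2$ swapped: here we pass from the asymptotics of $\pi(x,1,2)$ (already controlled by Theorem~\ref{goodenough2} in the cascade case and, via the forthcoming Theorem~\ref{four}, in the bridge case) to those of $\pi(x,0,1)$ using the equilibrium equation for $\pi$ along the $x$-axis.

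Concretely, I would first form the generating functions $\Pi_1(z)=\sum_{x\geq 1}z^x\pi(x,0,1)$ and $\Pi_2(z)=\sum_{x\geq 1}z^x\pi(x,1,2)$. Multiplying the equilibrium equation
\[
\pi(x,0,1)=\lambda_1\pi(x-1,0,1)+\mu\pi(x+1,0,1)+\mu\pi(x,1,2),\qquad x\geq 1,
\]
by $z^x$ and summing yields the rational identity (\ref{doitagain}),
\[
\Pi_1(z)=-\frac{z}{\lambda_1 z^2-z+\mu}\bigl(\mu\Pi_2(z)+\lambda_1 z\,\pi(0,0,1)-\mu\,\pi(1,0,1)\bigr).
\]
The denominator has two real roots $r_\pm=(1\pm\sqrt{1-4\mu\lambda_1})/(2\lambda_1)$, and I would next show the ordering $r_-<1<\rho^{-1}<\gamma_T<\alpha_E<r_+$; the only nontrivial inequality, $\alpha_E<r_+$, reduces after squaring to a tautology, as already noted in (\ref{orderofroots}). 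Since $\Pi_1$ is analytic on the disc $|z|<\rho^{-1}$ by comparison with the $M|M|1$ bound $\pi(x,0,1)\leq(1-\rho)\rho^x$, and since $r_-$ lies strictly inside that disc, the numerator must vanish at $z=r_-$, so this pole cancels automatically.

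With that cancellation in hand, the only relevant singularity of the right-hand side on the real axis to the right of $\rho^{-1}$ comes from $\Pi_2(z)$ itself. In the cascade case Theorem~\ref{goodenough2} gives $\pi(x,1,2)\sim C(x)\beta_T^{-1}\gamma_T^{-x}$ with $C(x)\to 0$ polynomially, making $\gamma_T$ the dominant singularity of $\Pi_2$; in the bridge case Theorem~\ref{four} gives $\alpha_E$ as the dominant singularity. The plan is then to substitute $w=z/\gamma_T$ (or $w=z/\alpha_E$) so that the singularity is moved to $w=1$, and to invoke the transfer theorems of \cite{Odd} exactly as in the derivation of Corollary~\ref{goodenough} from (\ref{swap2}). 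Because the prefactor $-z/(\lambda_1 z^2-z+\mu)$ is analytic and nonvanishing at the singularity (both $\gamma_T$ and $\alpha_E$ lie strictly between $r_-$ and $r_+$ by (\ref{orderofroots})), the asymptotics of $[z^x]\Pi_1(z)$ match those of $[z^x]\Pi_2(z)$ up to the multiplicative constant
\[
C_2=-\frac{\gamma_T}{\lambda_1\gamma_T^2-\gamma_T+\mu}\quad\text{(cascade)}\qquad\text{or}\qquad
C_2=-\frac{\alpha_E}{\lambda_1\alpha_E^2-\alpha_E+\mu}\quad\text{(bridge)},
\]
both of which are positive by the root-ordering inequality.

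The main obstacle I anticipate is the transfer step: one must check that $\Pi_2$ has precisely the kind of algebraic-polynomial singular behaviour at $\gamma_T$ (resp.\ $\alpha_E$) required by the transfer theorems of \cite{Odd}, and that no further singularities of $\Pi_2$ lie on the circle of convergence outside the real axis. In the cascade case this is essentially handed to us by Theorem~\ref{goodenough2}, since the singular expansion of $\Pi_2$ is driven by the $y^{-3/2}$ (or $y^{-5/2}$) polynomial factor inherited from $\pi(1,y,1)$; in the bridge case the same information will come from the singular expansion used to prove Theorem~\ref{four}. Once that input is available, the algebra above delivers $\pi(x,0,1)\sim C_2\,\pi(x,1,2)$ and completes the proof.
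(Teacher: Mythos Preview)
Your proposal is correct and follows essentially the same route as the paper: the generating-function identity (\ref{doitagain}), the root ordering (\ref{orderofroots}) with the cancellation at $r_-$, the case split between cascade ($\gamma_T$) and bridge ($\alpha_E$) for the dominant singularity of $\Pi_2$, and the transfer via \cite{Odd} are exactly what the paper does. Your explicit discussion of the singularity-type hypothesis needed for the transfer step is a welcome addition that the paper leaves implicit.
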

 \subsubsection{Asymptotics of $\pi(b^u(\ell))$ for $\alpha_T<e^{u_1}\leq \gamma_T$ in the cascade case}
Using the results in by Appendix \ref{N-T}, pick a twist such that $\alpha_T<e^{u_1}\leq \gamma_T$. The mean of the twisted kernel points  north-east. We can now repeat the calculations in Theorem \ref{one} with $h(x,y)=\exp(u_1 x+u_2 y)$. We take $\Delta=\{(x,0,1):x\geq 1\}$
and $\blacktriangle= \{(x,y):x\geq 1,y\geq 1\}^c$ inside $\ZZ^2$. The key fact to check is that $\sum_{x\geq 0} h(x,0) \pi(x,0)<\infty$. But
\begin{eqnarray*}
\sum_{x\geq 0} h(x,0) \pi(x,0)&=&\sum_{x\geq 0} e^{u_1 x} \pi(x,0)\\
&\leq& \sum_{x\geq 0}e^{u_1 x} C\gamma_T^{-x}
\end{eqnarray*}
by Theorems \ref{goodenough3} and \ref{goodenough2} where $C$ is some constant.
This sum is finite since $e^{u_1}\leq \gamma_T$
and we conclude
\begin{theorem}\label{upray}
If  $\tilde{\lambda}_1>\tilde{\mu}$ and $\beta_T\leq \beta_E$ and $\alpha_T<e^{u_1}\leq \gamma_T$ then
\begin{eqnarray*}
\pi(d^{u}(\ell))&\sim&B^{u}e^{-u\cdot d^{u}(\ell)}\frac{1}{\sqrt{2\pi \ell/|m^{}|}}
\end{eqnarray*}
where $$B^{u}=([|Q^{u}|(m^{u}\cdot \Sigma^{u} m^{})]^{-1/2})(\sum_{z\in \Delta}h(z)\pi(z)P_z[\mathcal{M}\mbox{ never hits }\blacktriangle]).$$
\end{theorem}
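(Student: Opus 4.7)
The plan is to mimic the proof of Theorem \ref{one} almost verbatim, using the general harmonic function $h(x,y)=\exp(u_1 x+u_2 y)$ in place of the ray harmonic $\rho^{-(x+y)}$, and relying on the finiteness of the boundary moment (which has already been verified in the text preceding the statement) to justify dominated convergence.

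First, I would write down the representation analogous to \eqref{firstrep2}. Taking $\Delta=\{(x,0,1):x\geq 1\}$ and $\blacktriangle=\{(x,y,1):x\geq 1, y\geq 1\}^c$, the equality
\begin{equation*}
\pi(d^u(\ell))=h(d^u(\ell))^{-1}\sum_{z\in\Delta} h(z)\pi(z)\,\overline{{\cal G}}_{\blacktriangle}(z;d^u(\ell))
\end{equation*}
holds by the potential decomposition of Subsection \ref{pathrare}. Then I would substitute the Ney--Spitzer decomposition \eqref{Spitz},
\begin{equation*}
\overline{{\cal G}}_{\blacktriangle}(z;d^u(\ell))=\overline{{\cal G}}(z;d^u(\ell))-\sum_{w\in\blacktriangle}\Pi^{\overline{{\cal K}}}_{\blacktriangle}(z,w)\,\overline{{\cal G}}(w;d^u(\ell)),
\end{equation*}
and normalise by $\overline{{\cal G}}((0,0);d^u(\ell))$ exactly as in the derivation leading to equation \eqref{therep}. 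The analogue of Corollary~1.3 in \cite{Ney-Spitzer}, reinterpreted for the twisted kernel with mean direction $d^u$, yields
\begin{equation*}
\frac{\overline{{\cal G}}(z;d^u(\ell))}{\overline{{\cal G}}((0,0);d^u(\ell))}\longrightarrow 1,\qquad \frac{\overline{{\cal G}}(w;d^u(\ell))}{\overline{{\cal G}}((0,0);d^u(\ell))}\longrightarrow 1
\end{equation*}
for each fixed $z\in\Delta$ and $w\in\blacktriangle$.

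Next I would verify that dominated convergence applies to the two sums obtained after normalisation. The finiteness $\sum_{z\in\Delta}h(z)\pi(z)<\infty$ has just been established directly before the statement using Theorems \ref{goodenough3} and \ref{goodenough2} together with the hypothesis $e^{u_1}\leq\gamma_T$. For the uniform bound on the ratio $\overline{{\cal G}}(z;d^u(\ell))/\overline{{\cal G}}((0,0);d^u(\ell))$ over $z$ on the $x$-axis, I would repeat the argument of Proposition \ref{uniformbound} word-for-word: write $z=\langle a\ell\rangle$, pick $u^\ast$ with $\mgf(u^\ast)=1$ and $d^{u^\ast}$ pointing in the direction of $d^u(\ell)-z$, apply the Ney--Spitzer local limit uniformly in the twist to both numerator and denominator, and use convexity of $\partial D^{\mgf}$ to conclude that the exponential factor $\exp((u-u^\ast)\cdot p(\ell))$ is bounded by $1$. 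Passing to the limit then gives
\begin{equation*}
\pi(d^u(\ell))\sim \rho\text{-factor removed}\;\;h(d^u(\ell))^{-1}\overline{{\cal G}}((0,0);d^u(\ell))\sum_{z\in\Delta}h(z)\pi(z)\,\mathcal{P}^h_z[\mathcal{M}\mbox{ never hits }\blacktriangle],
\end{equation*}
with $h(d^u(\ell))^{-1}=\exp(-u\cdot d^u(\ell))$.

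Finally I would apply Theorem~2.2 of \cite{Ney-Spitzer}, using the relation $d^u(\ell)=b^u(\ell/|m^u|)$, to obtain
\begin{equation*}
\overline{{\cal G}}((0,0);d^u(\ell))\sim \frac{1}{\sqrt{2\pi\ell/|m^u|}}\bigl[|Q^u|(m^u\cdot\Sigma^u m^u)\bigr]^{-1/2},
\end{equation*}
and assemble the constant $B^u$ as in the statement. The periodicity issue is handled exactly as in the proof of Theorem \ref{one}: replace $K$ by $(K+I)/2$, note that $\pi$ is unchanged, and check that the algebraic factor arising from halving the drift and the covariance cancels the factor from halving the escape probability. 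The main obstacle I anticipate is the uniform bound on the normalised potential at boundary points $z\in\Delta$; however, because the support of $\overline{K}$ restricted to sheet~1 contains no southward jumps, the argument of Proposition \ref{uniformbound} carries over without modification, so this obstacle is already dealt with in the preceding section.
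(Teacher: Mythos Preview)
Your proposal is correct and follows exactly the approach the paper takes: the paper's own argument (given in the paragraph immediately preceding the theorem) simply says ``repeat the calculations in Theorem~\ref{one} with $h(x,y)=\exp(u_1x+u_2y)$'' after verifying $\sum_{x\geq 0}e^{u_1x}\pi(x,0,1)<\infty$ via Theorems~\ref{goodenough2} and~\ref{goodenough3}, and you have spelled out precisely what that repetition entails. In particular your observation that Proposition~\ref{uniformbound} carries over unchanged---because its proof uses only the uniformity in $u$ of Theorem~2.2 in \cite{Ney-Spitzer} and the absence of southward jumps, neither of which is specific to $\overline{u}$---is exactly the point.
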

We conclude that large deviations on sheet~1 in north-north-east directions is a ray. However for north-east-east directions on sheet~1 the large deviation path
is not a ray and we investigate these paths next.

 \subsubsection{Asymptotics of $\pi(b^u(\ell))$ for $\gamma_T<e^{u_1}$ in the cascade case}
 Consider a point $w$ such on sheet~1 that $\mgf(w)=1$  and $e^{w_1}> \gamma_T$. Let $(x^w,y^w)=<\ell\cdot d^w>$.
 Construct an invariant measure on the  sheet~1 of the form $\psi(x,y)=e^{-(u_1 x+u_2y)}$ where $u$ belongs to the egg for sheet~1 such that $e^{u_1}=\alpha_E$.
 This is possible because $\gamma_T<\alpha_E$ with the argument above.  Consequently a line dropped from $(\gamma_T,\beta_T)$ must hit the egg on sheet~1 at some point $(\gamma_T,\delta_T)$ where $\delta_T=e^{u_2}$.
  Construct an invariant measure on the  sheet~1 of the form $\psi(x,y)e^{-(u_1 x+u_2y)}=\gamma_T^{-x}\delta_T^{-y}$. Again using time reversal
  \begin{eqnarray*}
\frac{\pi(x,y,1)}{\psi(x,y)}&=&E_{(x,y,1)}[\frac{\pi(\overleftarrow{M}(\tau))}{\psi(\overleftarrow{M}(\tau))}]\\
&\sim&C_2\cdot \beta_T^{-1} E_{(x,y,1)}[C(\overleftarrow{M}(\tau))]\beta_T^{-1} =D(x,y)
\end{eqnarray*}
by Theorems \ref{goodenough2} and \ref{goodenough3}. $D(x,y)$ tends to zero as $x\to\infty$ at a polynomial rate.
This gives
 $$\pi(x^w,y^w,1)\sim D(x^w,y^w)\gamma_T^{-x^w}\delta_T^{-y^w}$$
 as $x^w\to\infty$.
 In the cascade case the large deviation path from $(0,0,1)$ to $x^w,y^w,1$ is a bridge up the $y$-axis on sheet~1 followed by a jump to sheet~2 followed by a cascade across sheet~2 to some point $(x,1,2)$
 followed by a transition to sheet~1 and then along the path twisted by $(e^{u_1},e^{u_2})=(\gamma_T,\delta_T)$ to  $(x^w,y^w,1)$.

\subsection{The bridge case}
 Here we study
the bridge case where  $\beta_T>\beta_E$. In the bridge case
the large deviation path skims along the $x$-axis on sheet~2 to reach $(x,1,2)$.

\subsubsection{Extending to $\pi(x,1,2)$ in the Bridge case}
 We repeat the argument in Subsection \ref{upbridge}.
We define $\Delta=\{(0,y,2):y\geq 1\}$ and $\blacktriangle=\{(x,y,2):x\geq 0,y>0\}^c$.
We consider the \textit{free} kernel $\overline{K}$ with killing probability $K((x,1,2),(x,0,1))$ on points $(x,1)$ where there are transitions from sheet~2 to sheet~1.
To twist the free kernel $\overline{K}$ to make a bridge path consider a {\em rough} $h_E$-transform where $h_E(x,y)=\alpha_E^x\beta_E^y$
and where we already obtained  $\beta_E=\sqrt{\mu/\lambda_2}$ and
 $\alpha_{E}=(1-2\sqrt{\mu\lambda_2})/(2\lambda_1)$. We again remark that $(\rho^{-1},\rho^{-1})$ lies on Curve~2 so it follows that
 $\alpha_E>\rho^{-1}>\alpha_T$.

Let $\mathcal{K}_E$ denote the $h_E$-transformed kernel.  The Markovian part (the $y$-component) of $\mathcal{K}_E$ is $s=\hat{\mathcal{K}}_E(y,y)=\lambda_1\alpha_E $ for $y\geq 1$ and
$$\hat{\mathcal{K}}_E(y,y+1)=\hat{\mathcal{K}}_E(y,y-1)=\sqrt{\mu\lambda_2}=p\mbox{ for } y\geq 2. $$
Hence $\hat{\mathcal{K}}_E(y,y-1)=p$ for $y\geq 2$. At $y=1$
$\hat{\mathcal{K}}_E(1,2)=\sqrt{\mu\lambda_2}$ but we define a transition
$K((x,1,2),(x,0,1))$ to be a killing so
$\hat{\mathcal{K}}_E(1,0)=0$ since this is a transition off sheet~1.
Hence $\hat{\mathcal{K}}_E$ has a killing $\kappa=\sqrt{\mu\lambda_2}$ at point $1$.
The mean drift in the $x$-direction at $(x,y)$ for $y\geq 1$ is
$d_{+}=\lambda_1\alpha_E$.

Again note that $\hat{h}_0(y)=y$ is harmonic for $\hat{\mathcal{K}}_E$ and the associated $\hat{h}_0$-transform gives a Markovian kernel
satisfying the conditions of Theorem 1 in \cite{kesten}. Consequently $\hat{h}_0$ is unique harmonic function for $\hat{\mathcal{K}}_E$ and consequently $h(x,y)=\hat{h}_0(x)h_E(x,y)=y\alpha_E^x\beta_E^y$
is harmonic for $\overline{K}$.
Again the $h$-transform of $\overline{K}$ is denoted by $\overline{\mathcal{K}}$ which is associated with the twisted chain $\mathcal{M}$ and the associated potential
is $\overline{\mathcal{G}}$.
$\overline{\mathcal{K}}_{\blacktriangle}$ is the kernel  killed on hitting $\blacktriangle$.
We recall the representation (\ref{firstrep2}),
\begin{eqnarray}\label{firstrep3plus}
 \pi(x,1,2)&=&h(x,1)^{-1}\sum_{y\geq 0}h(0,y)\pi(0,y,2) \overline{{\cal G}}_{\blacktriangle}((0,y);(x,1))\nonumber\\
 &=&\beta_E\alpha_E^{-x}\sum_{y}y\beta_E^{y}\pi(0,y,2) \overline{{\cal G}}_{\blacktriangle}((0,y);(x,1)).
 \end{eqnarray}
By Corollary \ref{goodenough}, $\pi(1,y,2)=o(\beta_T^{-y})$ and by hypothesis $\beta_T>\beta_E$ so
 $$\sum_y y\beta_E^y \pi(0,y,2)<\infty.$$ 

Let $\gamma(0,y)=h_0(y)\alpha_E^{y}\pi(0,y,1)\chi_{\Delta}((0,y))$ and $\gamma_{\blacktriangle}(w)=\gamma \Pi^{\overline{{\cal K}}}_{\blacktriangle}(w)$
where $w\in\blacktriangle$ and $\Pi^{\overline{{\cal K}}}_{\blacktriangle}((0,y);w))$ is the probability of hitting $w$ if  the twisted chain started at $(0,y)$ enters $\blacktriangle$.
Again using Lemma \ref{Spitz}
\begin{eqnarray*}
& &\gamma\overline{{\cal G}}_{\blacktriangle}(x,1)\\
&=&\sum_x\gamma(0,y)\overline{{\cal G}}((0,y);(x,1))-\sum_y\gamma(0,y)\sum_{w\in\blacktriangle}\Pi^{\overline{{\cal K}}}_{\blacktriangle}((0,y);(w,0)))\overline{{\cal G}}((w,0);(x,1))\\
&=&\gamma  \overline{{\cal G}}(x,1)-  \gamma_{\blacktriangle}\overline{{\cal G}}(x,1) .
\end{eqnarray*}

We use the same method as in the proof of Theorem \ref{threeplus} to show that $x\to\infty$,
$\frac{\overline{{\cal G}}((0,y);(x,1))}{\overline{\mathcal{G}}((0,1);(1,y))}$ is uniformly bounded in $y$ and tends to $1$ as $x\to \infty$. If this is true then
$$\frac{\gamma  {\cal G}(1,y)}{\overline{\mathcal{G}}((1,0);(1,y)}= \sum_{x}x\alpha_E^{x}\pi(x,0,1)\frac{\overline{\mathcal{G}}((x,0);(1,y))}{\overline{\mathcal{G}}((1,0);(1,y))}
\to \sum_{y}y\alpha_E^{y}\pi(0,y,1)$$ and
\begin{eqnarray*}
& &\frac{\gamma_{\blacktriangle} \overline{{\cal G}}(x,1) }{\overline{\mathcal{G}}((0,1);(x,1))}= \sum_{y}x\alpha_E^{y}\pi(0,y,2)\sum_{w\in\blacktriangle}\Pi^{\overline{{\cal K}}}_{\blacktriangle}((0,y);w))\frac{{\mathcal{G}}(w;(x,1))}{\overline{\mathcal{G}}((0,1);(x,1))}\\
&\to& \sum_{y}y\alpha_E^{y}\pi(0,y,2)\mathcal{P}_{(0,y)}[\mathcal{M}\mbox{ hits }\blacktriangle].
\end{eqnarray*}
In which case,
$$\pi(x,1,2)\sim\beta_E^{-1}\alpha_E^{-x}\overline{\mathcal{G}}((1,0);(1,y)\sum_{y\in \Delta}y\beta_E^{y}\pi(0,y,2) \mathcal{P}_{(0,y)}[\mathcal{M}\mbox{ never hits }\blacktriangle].$$
We have therefore stripped out the exponential decay $\alpha_E^{-x}$ out of $\pi(x,1,2)$ leaving the polynomial decay $\overline{\mathcal{G}}((0,1);(x,1)$.

Next recall $\mathcal{G}((0,1);(x,1)=\mathcal{G}_E((0,1);(x,1)\frac{\hat{h}_0(1)}{\hat{h}_0(1)}=\mathcal{G}_E((0,1);(x,1)$. Again we note
$\mathcal{G}_E((0,1);(x,1)$ was analyzed analytically in \cite{FM:BridgesExact}. This again requires relabeling or shifting the first quadrant so $(0,1)\to (0,0)$ and $(x,1)\to (x,0)$.
Denote the relabelled or shifted kernel by $\mathcal{K}_s$ and the associated potential by $\mathcal{G}_s$.
Hence $\mathcal{K}_s((x,y);((u,v))=\mathcal{K}_E((x,y+1);((u,v+1))$ and
$\mathcal{G}_E((0,1);(x,1)=\mathcal{G}_s((0,0);(x,0)$.
In these new coordinates the probability of killing $\kappa$ at any point $(x,0)$ is $\mathcal{K}_E((x,1);(x,0))=\mu \beta_E^{-1}$
and $$p_0:=\hat{\mathcal{K}}_s(0,1)=\hat{\mathcal{K}}_s(y,y+1)=p=\hat{\mathcal{K}}_s(y,y-1).$$
By Proposition 1 in \cite{FM:BridgesExact},
${\cal G}_s((0,0);(0,x)) \sim C_+ \,  x^{-3/2}$
where
$C_+ = \frac{p_0}{\kappa^2}\sqrt{\frac{d_+}{2 \pi (1 - s)}}$
\begin{theorem}\label{four}
If $\tilde{\lambda}_1-\tilde{\mu}>0$  and  $\beta_T>\beta_E$ then
$$\pi(x,1,2)\sim\beta_E^{-1}\alpha_E^{-x}C_+   x^{-3/2}\sum_{y}y\beta_E^{y}\pi(0,y,2) \mathcal{P}_{(0,y)}[\mathcal{M}\mbox{ never hits }\blacktriangle].$$
\end{theorem}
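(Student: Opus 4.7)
The approach will mirror the proof of Theorem \ref{threeplus}, but with the roles of the two axes (and the two sheets) interchanged. The plan is as follows.

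First, I would justify the choice of harmonic function $h(x,y) = y\alpha_E^x\beta_E^y$. Since $\hat{h}_0(y) = y$ is the unique positive harmonic function for the one-dimensional Markovian $y$-component $\hat{\mathcal{K}}_E$ killed at $y=0$ (by Theorem 1 of \cite{kesten}, which applies because after lazifying $\hat{\mathcal{K}}_E$ to $\frac{1}{2}\hat{\mathcal{K}}_E + \frac{1}{2}I$ the uniform aperiodicity condition holds), the product $h$ is harmonic for $\overline{K}$ on sheet~2 and vanishes on the killing set $\{y = 0\}$. Substituting $h$ into the representation (\ref{firstrep2}) yields the identity (\ref{firstrep3plus}), reducing the problem to studying $\overline{\mathcal{G}}_{\blacktriangle}((0,y);(x,1))$ for the twisted kernel $\overline{\mathcal{K}}$. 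Next, I would verify absolute summability $\sum_{y} y \beta_E^y \pi(0,y,2) < \infty$: by Corollary \ref{goodenough} one has $\pi(0,y,2) \sim C_1 \beta_T^{-y} y^{-3/2}$, and in the bridge case $\beta_T > \beta_E$ forces the series to converge.

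Second, I would apply Spitzer's decomposition (\ref{Spitz}) to write $\overline{\mathcal{G}}_{\blacktriangle}((0,y);(x,1)) = \overline{\mathcal{G}}((0,y);(x,1)) - \sum_{w \in \blacktriangle} \Pi^{\overline{\mathcal{K}}}_{\blacktriangle}((0,y);w)\, \overline{\mathcal{G}}(w;(x,1))$. After dividing through by $\overline{\mathcal{G}}((0,1);(x,1))$, the two ingredients needed to extract the asymptotics by dominated convergence are (i) pointwise convergence $\overline{\mathcal{G}}((0,y);(x,1))/\overline{\mathcal{G}}((0,1);(x,1)) \to 1$ as $x \to \infty$ for each fixed $y$, and (ii) a bound on this ratio that is uniform in $y$. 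Pointwise convergence follows from Proposition~4 of \cite{FM:BridgesExact}, once periodicity of $\overline{\mathcal{K}}$ has been removed by lazification. For the uniform bound I would use the coupling argument from Section~7.3 of \cite{FM:BridgesExact}: on a product path space of $\mathcal{M}$ started respectively at $(0,y)$ and $(0,1)$, any trajectory from $(0,y)$ that reaches $(x,1)$ must cross the trajectory from $(0,1)$, because the line $\{y = 0\}$ is a killing boundary on sheet~2 and the trajectory from $(0,1)$ is trapped beneath it. Coupling after the first crossing yields $\overline{\mathcal{G}}((0,y);(x,1)) \leq \overline{\mathcal{G}}((0,1);(x,1))$ for every $y$.

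Third, having applied dominated convergence to produce the constant $\sum_{y} y \beta_E^y \pi(0,y,2)\, \mathcal{P}_{(0,y)}[\mathcal{M} \text{ never hits } \blacktriangle]$, it remains to identify the polynomial decay of $\overline{\mathcal{G}}((0,1);(x,1))$. Since the $\hat{h}_0$-transform preserves the Green function ratio, $\overline{\mathcal{G}}((0,1);(x,1)) = \mathcal{G}_E((0,1);(x,1))$, and after the translation $(0,1)\to(0,0)$, $(x,1)\to(x,0)$ this equals $\mathcal{G}_s((0,0);(x,0))$ for the shifted kernel $\mathcal{K}_s$. The latter falls directly under Proposition~1 of \cite{FM:BridgesExact}, applied with killing probability $\kappa = \sqrt{\mu\lambda_2}$, interior step $p = \sqrt{\mu\lambda_2}$, holding $s = \lambda_1 \alpha_E$, and drift $d_+ = \lambda_1 \alpha_E$, producing $\mathcal{G}_s((0,0);(x,0)) \sim C_+ x^{-3/2}$ with the stated constant $C_+$.

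The main obstacle is the uniform bound on $\overline{\mathcal{G}}((0,y);(x,1))/\overline{\mathcal{G}}((0,1);(x,1))$. One cannot simply quote the Ney--Spitzer-type bounds of Proposition \ref{uniformbound}, because here the starting state $(0,y)$ is not being scaled proportionally to $(x,1)$ along a geometric direction. The coupling argument succeeds only because the $x$-axis is absorbing on sheet~2, which forces paths from different starting heights to meet before they can land at height $1$; verifying that the bridge twist truly preserves this trapping geometry is the crux of the proof.
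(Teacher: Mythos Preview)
Your proposal is correct and follows essentially the same approach as the paper: it mirrors the proof of Theorem~\ref{threeplus} by swapping the roles of the axes, invoking the representation~(\ref{firstrep3plus}), checking summability via Corollary~\ref{goodenough} and the bridge hypothesis $\beta_T>\beta_E$, then using the Spitzer decomposition together with Proposition~4 and the Section~7.3 coupling argument of \cite{FM:BridgesExact} for the ratio limit and uniform bound, and finally Proposition~1 of \cite{FM:BridgesExact} for the $x^{-3/2}$ decay. The only cosmetic slip is that in your coupling description the direction of crossing is phrased backwards (it is the path from $(0,1)$ that must hit the path from $(0,y)$, being trapped between the $x$-axis and it), but your conclusion $\overline{\mathcal{G}}((0,y);(x,1))\le\overline{\mathcal{G}}((0,1);(x,1))$ is the right one.
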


\subsubsection{Extending to $\pi(x,0,1)$ in the bridge case}\label{extension-xonone2}

This extension was accomplished in Theorem \ref{goodenough3} for both the cascade and bridge cases.

 \subsubsection{Asymptotics of $\pi(b^u(\ell))$ for $\alpha_T<e^{u_1}\leq \alpha_E$ in the bridge case}
Again, using the results in by Appendix \ref{N-T}, pick a twist such that $\alpha_T<e^{u_1}\leq \alpha_E$. The mean of the twisted kernel points  north-east. We can now repeat the calculations in Theorem \ref{one} with $h(x,y)=\exp(u_1 x+u_2 y)$. We take $\Delta=\{(x,0,1):x\geq 1\}$
and $\blacktriangle= \{(x,y):x\geq 1,y\geq 1\}^c$ inside $\ZZ^2$. The key fact to check is that $\sum_{x\geq 0} h(x,0) \pi(x,0)<\infty$. But
\begin{eqnarray*}
\sum_{x\geq 0} h(x,0) \pi(x,0)&=&\sum_{x\geq 0} e^{u_1 x} \pi(x,0)\\
&\leq& \sum_{x\geq 0} \alpha_E^x \pi(x,0)<\sum_{x\geq 0}  \alpha_E^x  C x^{-3/2}\alpha_E^{-x}
\end{eqnarray*}
by Theorems \ref{goodenough3} and \ref{four} where $C$ is some constant.
Since this sum is finite we conclude
\begin{theorem}\label{upray2}
If  $\tilde{\lambda}_1>\tilde{\mu}$ and $\alpha_T<e^{u_1}\leq \alpha_E$ then
\begin{eqnarray*}
\pi(d^{u}(\ell))&\sim&B^{u}e^{-u\cdot d^{u}(\ell)}\frac{1}{\sqrt{2\pi \ell/|m^{}|}}
\end{eqnarray*}
where $$B^{u}=([|Q^{u}|(m^{u}\cdot \Sigma^{u} m^{})]^{-1/2})(\sum_{z\in \Delta}h(z)\pi(z)P_z[\mathcal{M}\mbox{ never hits }\blacktriangle]).$$
\end{theorem}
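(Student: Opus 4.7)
The plan is to transplant the machinery developed for Theorem~\ref{one} to the new twist $u$, with $\mgf(u)=1$ and $\alpha_T<e^{u_1}\leq \alpha_E$. I would take $h(x,y)=e^{u_1x+u_2y}$, the same boundary $\Delta=\{(x,0,1):x\geq 1\}$ and $\blacktriangle=\{(x,y):x\geq 1,y\geq 1\}^{c}$ in $\ZZ^2$, and run exactly the argument of Section~\ref{rayraylab}: start from the boundary representation (\ref{firstrep2}), apply the Spitzer-type decomposition (\ref{Spitz}) to separate $\overline{\mathcal{G}}_{\blacktriangle}$ from $\overline{\mathcal{G}}$ minus a hitting integral against $\Pi^{\overline{\mathcal{K}}}_{\blacktriangle}$, then normalize by $\overline{\mathcal{G}}((0,0);d^u(\ell))$ and pass to the limit as $\ell\to\infty$.

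The two analytical inputs needed are (a) pointwise convergence of the Martin ratio $\overline{\mathcal{G}}(z;d^u(\ell))/\overline{\mathcal{G}}((0,0);d^u(\ell))\to 1$ for each $z\in\Delta$, together with a uniform bound permitting dominated convergence; and (b) the summability $\sum_{z\in\Delta}h(z)\pi(z,0,1)<\infty$. Input (a) comes directly from Corollary~1.3 and Theorem~2.2 of \cite{Ney-Spitzer} (applicable in our cone by the appendix), while the uniform bound is furnished by Proposition~\ref{uniformbound}: its proof uses only convexity of $D^{\mgf}$ and the uniform Ney--Spitzer estimate, so it transfers verbatim with $\overline{u}$ replaced by $u$. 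Input (b) is exactly what the paragraph preceding the statement establishes: by Theorem~\ref{four} one has $\pi(x,1,2)\sim C_{+}x^{-3/2}\alpha_E^{-x}$, Theorem~\ref{goodenough3} transfers this to $\pi(x,0,1)$ up to a constant, and since $e^{u_1}\leq \alpha_E$ the series $\sum_{x\geq 0}e^{u_1x}\pi(x,0,1)$ is dominated by $C\sum_{x}x^{-3/2}<\infty$.

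With (a) and (b) in hand, dominated convergence gives
$$
\pi(d^u(\ell))\sim e^{-u\cdot d^u(\ell)}\,\overline{\mathcal{G}}((0,0);d^u(\ell))\sum_{z\in\Delta}h(z)\pi(z,0,1)\,\mathcal{P}^h_z[\mathcal{M}\text{ never hits }\blacktriangle],
$$
and substituting the Ney--Spitzer asymptotic $\overline{\mathcal{G}}((0,0);d^u(\ell))\sim (2\pi\ell/|m^u|)^{-1/2}[|Q^u|(m^u\cdot\Sigma^u m^u)]^{-1/2}$ produces the stated $B^u$. Periodicity is then absorbed by the same $(K+I)/2$ substitution used in the proof of Theorem~\ref{one}, where the rescaling factors multiply out to~$1$.

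I expect the only genuinely non-trivial step to be input~(b), since this is where the hypothesis $e^{u_1}\leq\alpha_E$ plays an essential role — the whole bridge-case saturation of the sheet-$2$ egg at $(\alpha_E,\beta_E)$ was set up precisely to make this sum converge right up to the edge $e^{u_1}=\alpha_E$. Everything else is a straightforward translation of the cascade-case argument (Theorem~\ref{upray}) to the bridge-case domain of twists, and the only reason it could not simply be quoted is that the relevant bound on $\pi(x,0,1)$ has a different polynomial prefactor in the two cases.
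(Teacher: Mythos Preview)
Your proposal is correct and follows essentially the same approach as the paper: reduce to the machinery of Theorem~\ref{one} with the new twist $h(x,y)=e^{u_1x+u_2y}$, identify the only new ingredient as the summability $\sum_{x}e^{u_1x}\pi(x,0,1)<\infty$, and verify it via Theorems~\ref{four} and~\ref{goodenough3} together with $e^{u_1}\leq\alpha_E$ so that the bound collapses to $C\sum_x x^{-3/2}<\infty$. The paper's proof is slightly terser but structurally identical.
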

We conclude that large deviations on sheet~1 in north-north-east directions is a ray. However for north-east-east directions on sheet~1 the large deviation path
is not a ray and we investigate these paths next.

 \subsubsection{Asymptotics of $\pi(b^u(\ell))$ for $\alpha_E<e^{u_1}$ in the bridge case}

The large deviation path to these points is not a ray but rather a path the does a large deviation on sheet~2 before returning to sheet~1.
In the Bridge case consider a point $w$ such on sheet~1 that $\mgf(w)=1$  and $e^{w_1}> \alpha_E$. Let $(x^w,y^w)=<\ell\cdot d^w>$.
 Construct an invariant measure on the  sheet~1 of the form $\psi(x,y)=e^{-(u_1 x+u_2y)}$ where $u$ belongs to the egg for sheet~1 such that $e^{u_1}=\alpha_E$.
 This is possible because the asymptote of the egg $\lambda_1\alpha+\lambda_2\beta+\mu \alpha^{-1}=1$ as $\beta\to 0$ is given by
 $\lambda_1\alpha+\mu \alpha^{-1}=1$; i.e. $\alpha=(1+\sqrt{1-4\lambda_1\mu})/(2\lambda_1)$ which is greater than $\alpha_E=(1-2\sqrt{\mu\lambda_2})/(2\lambda_1)$.
 Consequently a line dropped from $(\alpha_E,\beta_E)$ must hit the egg on sheet~1 at some point $(\alpha_E,\gamma_E)$ where $\gamma_E=e^{u_2}$.
 Now construct the time reversal with $\psi$. Note that the mean drift of the time reversal is $-m^w$ in direction $-d^w$. Next recall (\ref{beautiful}) so
 \begin{eqnarray*}
\frac{\pi(x,y,1)}{\psi(x,y)}&=&E_{(x,y,1)}[\frac{\pi(\overleftarrow{M}(\tau))}{\psi(\overleftarrow{M}(\tau))}].
\end{eqnarray*}
 But $\pi(x,0,1)/\psi(x,0)\sim  C_4 x^{-3/2}$ by Theorems \ref{four} and \ref{goodenough3} where $C_4$ is some constant. The time reversal drifting in direction $-d^w$
 will hit the $x$-axis with probability one. The value of the function
 $$C(x,y)=E_{(x,y,1)}[\frac{\pi(\overleftarrow{M}(\tau))}{\psi(\overleftarrow{M}(\tau))}]\sim
 E_{(x,y,1)}[C_4(\overleftarrow{M}(\tau))^{-3/2}]$$
 clearly tends to zero as $x\to\infty$ at a polynomial rate. This gives
 $$\pi(x^w,y^w,1)\sim C(x^w,y^w)\gamma_E^{-x^w}\beta_E^{-y^w}.$$
 Clearly the large deviation path from $(0,0,1)$ to $x^w,y^w,1$ is an immediate leap to sheet~2 followed by a bridge along the $x$-axis for a certain distance.
 Next the path leaps back to the first sheet and then follows the path twisted by $(e^{u_1},e^{u_2})=(\alpha_E,\gamma_E)$.


\section{Worse and Worse-the spiral-spiral case}\label{zigzag}
We assume $\tilde{\lambda}_1-\tilde{\mu}<0$ and $\tilde{\lambda}_2-\tilde{\mu}<0$. By adding these conditions we see that $\rho>1/2$.
We can investigate $\pi(x,y,s)$ for $s=1,2$ and for $x+y=\ell$ for $\ell$ large. Let
$\blacktriangle=\Delta=\{(0,0,1)\}$.
$h(x,y,s)=\rho^{-(x+y)}$ is harmonic on $\blacktriangle^{c}$. Using the harmonic function $\rho^{-(x+y)}$ we can produce the free twisted chain with kernel $\mathcal{K}$.
 We will first describe how large deviation paths to a point $(x,y,s)$ where $x+y=\ell$ occur.

Starting on sheet~1 at $(u,0,1)$, the mean number of steps for the $h$-transformed chain to drift across to the $y$ axis is $u/(\lambda-\rho^{-1}\lambda_1)$.
During that time the mean rise in $y$ is $\rho^{-1}\lambda_2 u/(\lambda-\rho^{-1}\lambda_1)$. This means that on average
the twisted chain hits the $y$ axis at $v=u+f_1\cdot u$ where
$$f_1=\frac{\rho^{-1}\lambda_2} {\lambda-\rho^{-1}\lambda_1}-1=\frac{\mu-\lambda}{\lambda-\rho^{-1}\lambda_1}.$$
Note that $f_1$ is positive since
$$\frac{\mu-\lambda}{\lambda-\rho^{-1}\lambda_1}>0\mbox{ if and only if }\lambda-\rho^{-1}\lambda_1>0$$
where the later expression is positive by hypothesis in the spiral-spiral case.

Similarly starting on sheet~2 at $(0,v,2)$, the mean number of steps to drift across to the $x$ axis is $v/(\lambda-\rho^{-1}\lambda_2)$.
During that time the mean rise in $x$ is $\rho^{-1}\lambda_1 v/(\lambda-\rho^{-1}\lambda_2)$. This means that on average
the twisted chain hits the $x$ axis at $v+f_2\cdot v$ where
$$f_2=\frac{\rho^{-1}\lambda_1} {\lambda-\rho^{-1}\lambda_2}-1=\frac{\mu-\lambda}{\lambda-\rho^{-1}\lambda_2}.$$
Again $f_2$ is positive since $\lambda-\rho^{-1}\lambda_2>0$ by hypothesis in the spiral-spiral case.
It follows that the spiral-spiral path essentially follows a sequence of similar triangles which grow exponentially.

We can make conjecture about the form of $\pi$. Let
$\mathcal{G}=\sum_{n=0}^{\infty}\mathcal{K}^n$  be the associated potential.
Let $\mathcal{K}_{\blacktriangle}$ be the taboo kernel $\mathcal{K}$ killed on $\blacktriangle$ and let
 $\mathcal{G}_{\blacktriangle}$ be the associated potential. We have  the representation
\begin{eqnarray}\label{secondrep}
 \pi(x,y,s)&=&(1-\rho)\rho^{x+y} {\cal G}_{\blacktriangle}((0,0,1);(x,y,s)).
 \end{eqnarray}

Let $\gamma(x,y)=(1-\rho){\cal G}_{\blacktriangle}((0,0,1);(x,y,s))$ and $\kappa(x,y)=(1-\rho){\cal G}_{\blacktriangle}((0,0,1);(x,y,2))$ so
  $\pi(x,y,1)=\rho^{x+y}\gamma(x,y)$ and $\pi(x,y,2)=\rho^{x+y}\kappa(x,y)$.
 Also let
 \begin{eqnarray}\label{simple}
 a=\lambda_1\rho^{-1}, b=\mu\rho \mbox{ and }c=\lambda_2\rho^{-1}.
 \end{eqnarray}
 Note that $b>a$ and $b>c$ in spiral-spiral case. $\mathcal{M}$ has transition probabilities given by $a$, $b$ and $c$ and
 \begin{eqnarray*}
 \gamma(x,y)=a\gamma(x-1,y)+b\gamma(x+1,y)+c\gamma(x,y-1)\mbox{ if }x,y\geq 1\\
 \kappa(x,y)=a\kappa(x-1,y)+b\kappa(x,y+1)+c\kappa(x,y-1)\mbox{ if }x,y\geq 1\\
 \gamma(x,0)=a\gamma(x-1,0)+b\gamma(x+1,0)+b\kappa(x,1)\mbox{ if }x\geq 1\\
 \kappa(0,y)=b\gamma(1,y)+b\kappa(0,y+1)+c\kappa(0,y-1)\mbox{ if }y\geq 1.
 \end{eqnarray*}
 Note that
 $$(1-\rho)\rho^{\ell}=\sum_{x+y=\ell}(\pi(x,y,1)+\pi(x,y,2))=\rho^{\ell}\sum_{x+y=\ell}(\gamma(x,y)+\kappa(x,y,2))$$
 so $\sum_{x+y=\ell}(\gamma(x,y)+\kappa(x,y))=(1-\rho)$

 It is reasonable to assume that, for $\ell$ large and $x+y=\ell$,
 $$\gamma(x,y)\sim \alpha(x)/\ell\mbox{ and }\kappa(x,y)\sim \beta(y)/\ell$$
 so $\sum_{x=1}^{\ell}\alpha(x)+\sum_{y=1}^{\ell}\beta(y)\sim (1-\rho)\ell$.
 $\alpha$ and $\beta$ approximately satisfy
 \begin{eqnarray}
 \frac{1}{\ell}\alpha(x)&=&\frac{a}{\ell-1}\alpha(x-1)+\frac{b}{\ell+1}\alpha(x+1)+\frac{c}{\ell-1}\alpha(x)\label{eqn1}\\
 \frac{1}{\ell}\alpha(\ell)&=&\frac{a}{\ell-1}\alpha(\ell-1)+\frac{b}{\ell+1}\alpha(\ell+1)+\frac{b}{\ell+1}\beta(1)\label{eqn2}\\
 \frac{1}{\ell}\beta(y)&=&\frac{a}{\ell-1}\beta(y)+\frac{b}{\ell+1}\beta(y+1)+\frac{c}{\ell-1}\beta(y-1)\label{eqn3}\\
 \frac{1}{\ell}\beta(\ell)&=&\frac{c}{\ell-1}\beta(\ell-1)+\frac{b}{\ell+1}\beta(\ell+1)+\frac{b}{\ell+1}\alpha(1)\label{eqn4}.
 \end{eqnarray}
 It follows from (\ref{eqn2}) and (\ref{eqn4}) that
 $$\frac{b}{\ell+1}\beta(1)=\frac{c}{\ell-1}\alpha(\ell)\mbox{ and }\frac{b}{\ell+1}\alpha(1)=\frac{a}{\ell-1}\beta(\ell)$$
 so to  order zero
 $b\beta(1)=c\alpha(\ell)$ and $b\alpha(1)=a\beta(\ell)$ .

 The $y$-axis acts like a turnstile on sheet~1 and the $x$-axis acts like a turnstile on sheet~2; i.e. they act like absorbing boundaries. Consequently
 if is reasonable to conjecture that
 \begin{eqnarray*}
\alpha(x)&=&C_1(1-\left(\frac{a}{b}\right)^x)-f\left(\frac{x}{\ell}\right)\\
\beta(y)&=&C_2(1-\left(\frac{c}{b}\right)^y)-g\left(\frac{y}{\ell}\right)
\end{eqnarray*}
where $f$ and $g$ are differentiable and
where $f(0)=0$ and $g(0)=0$.
Substitute $\alpha$ of this form into (\ref{eqn1}) and retain the terms of order zero and of order $1/\ell$.
This gives
\begin{eqnarray}
\lefteqn{C_1(1-\left(\frac{a}{b}\right)^x)-f\left(\frac{x}{\ell}\right)}\label{harmonic}\\
& &=\frac{a}{\ell-1}(C_1(1-\left(\frac{a}{b}\right)^{x-1})-f\left(\frac{x-1}{\ell-1}\right))\nonumber\\
& &+\frac{b}{\ell+1}(C_1(1-\left(\frac{a}{b}\right)^{x+1})-f\left(\frac{x+1}{\ell+1}\right))\nonumber\\
& &+\frac{c}{\ell-1}(C_1(1-\left(\frac{a}{b}\right)^x)-f\left(\frac{x}{\ell-1}\right))\nonumber
\end{eqnarray}

\begin{figure}
\begin{center}
\includegraphics{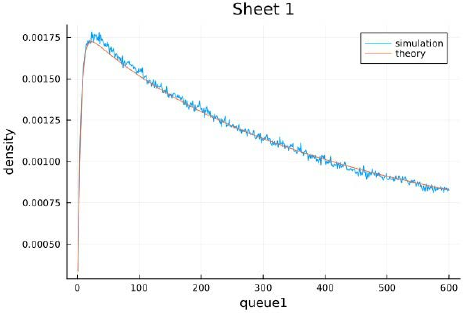}
\caption{Spiral-Spiral case: $\lambda_1=0.3$, $\lambda_2=0.15$, $\mu=0.55$\label{comparesim}}
\end{center}
\end{figure}
Now use the first order approximations
$$\frac{x-1}{\ell-1}-\frac{x}{\ell}\sim -\frac{1-x/\ell}{\ell},\frac{x+1}{\ell+1}-\frac{x}{\ell}\sim \frac{1-x/\ell}{\ell},\frac{x}{\ell-1}-\frac{x}{\ell}\sim -\frac{x/\ell}{\ell}$$
and
$$f\left(\frac{x-1}{\ell-1}\right)-f\left(\frac{x}{\ell}\right)\sim -f^{\prime}\left(\frac{x}{\ell}\right)\frac{(1-x/\ell)}{\ell},$$
and $f\left(\frac{x+1}{\ell+1}\right)-f\left(\frac{x}{\ell}\right)\sim f^{\prime}\left(\frac{x}{\ell}\right)\frac{(1-x/\ell)}{\ell}$ and
$$f\left(\frac{x}{\ell-1}\right)-f\left(\frac{x}{\ell}\right)\sim -f^{\prime}\left(\frac{x}{\ell}\right)\frac{x/\ell}{\ell}.$$
To first order (\ref{harmonic}) requires
$$f^{\prime}(s)((b-a)(1-s)+cs)+(a+c-b)f(s)=C_1(a+c-b)-C_1\left(\frac{a}{b}\right)^{\ell s}$$
where $s=x/\ell$. The function $(a/b)^{\ell s}$ goes to zero for $s>0$ as $\ell\to\infty$ so we discard it.
Hence we solve
$f^{\prime}(s)+r(s)f(s)=C_1r(s)$
where $r(s)=\frac{(a+c-b)}{((b-a)(1-s)+cs)}$.

Multiply the integrating factor $I(s)=((b-a)(1-s)+cs)$  and we get
$(I(s)f(s))^{\prime}= C_1(a+c-b)$. Integrating we get $I(s)f(s)=C_1(a+c-b)s+k$ where $k$ is a constant which must be $0$ if $f(0)=0$.
Hence $f(s)=C_1\frac{(a+c-b)s}{((b-a)(1-s)+cs)}$. Similarly
$g(s)=C_1\frac{(a+c-b)s}{((b-c)(1-s)+as)}$. Consequently
\begin{eqnarray}
\alpha(x)&=&C_1\left( (1-\left(\frac{a}{b}\right)^x)- \frac{(a+c-b)x/\ell}{((b-a)(1-x/\ell)+cx/\ell)}\right)\label{approx1}\\
\beta(y)&=&C_2\left((1-\left(\frac{c}{b}\right)^y)-\frac{(a+c-b)y/\ell}{((b-c)(1-y/\ell)+ay/\ell)}\right).\label{approx2}
\end{eqnarray}

Next the relations $b\beta(1)=c\alpha(\ell)$ and $b\alpha(1)=a\beta(\ell)$ to first order give
$$bC_2(1-\frac{c}{b})=c C_1(1-\frac{(a+c-b)}{c}\mbox{ and }bC_1(1-\frac{a}{b})=aC_2(1-\frac{a+c-b}{a}).$$
Both equations imply $C_2=C_1\frac{(b-a)}{(b-c)}.$

Finally we use the fact that
\begin{eqnarray}
\frac{1}{\ell}\sum_{x=1}^{\ell}\alpha(x)+\frac{1}{\ell}\sum_{y=1}^{\ell}\beta(y)\sim (1-\rho)\label{usesum}.
\end{eqnarray}
But
$$\frac{1}{\ell}\sum_{x=1}^{\ell}\alpha(x)\approx C_1-\frac{C_1}{\ell}\frac{a/b}{1-a/b}-C_1\int_0^1\frac{(a+c-b)s}{(b-a)(1-s)+cs}ds.$$
Since
$\int\frac{s}{p+qs}ds=\frac{1}{q^2}(p+qs-p\log(p+qs))$ we have $\int_0^1\frac{s}{(b-a)(1-s)+cs}ds=\frac{1}{(a+c-b)^2}((a+c-b)+(b-a)log((b-a)/c))$.
Hence
$$\frac{1}{\ell}\sum_{x=1}^{\ell}\alpha(x)\approx -\frac{C_1}{\ell}\frac{a/b}{1-a/b}-C_1\frac{(b-a)}{a+c-b}\log(\frac{b-a}{c})\mbox{ which is positive since }b<a+c.$$
Similarly $\frac{1}{\ell}\sum_{y=1}^{\ell}\beta(y)\approx -\frac{C_2}{\ell}\frac{c/b}{1-c/b}-C_2\frac{(b-c)}{a+c-b}\log(\frac{b-c}{a})$.
Using (\ref{usesum}) and $C_2=C_1\frac{(b-a)}{(b-c)}$ we get
\begin{eqnarray}
C_1&=&\frac{C(\ell)}{b-a} \mbox{ and }C_1=\frac{C(\ell)}{b-c}\mbox{ where }\label{foundC1}\\
C(\ell)&=&(1-\rho)(a+c-b)  (log(\frac{ac}{(b-a)(b-c)}) - \frac{1}{\ell}(a+c-b)(\frac{a}{(b-a)^2}+\frac{c}{(b-c)^2})    )^{-1}.\nonumber
\end{eqnarray}
(\ref{approx1}) and (\ref{approx2}) now give $\alpha$ and $\beta$.

To test our conjecture that
$\gamma(x,y)\sim \alpha(x)/\ell\mbox{ and }\kappa(x,y)\sim \beta(y)/\ell$
we simulated $n=10^6$ trajectories of $\mathcal{M}$  starting from $(0,0,1)$ using Julia \cite{Julia}. We count the number of visits $N(x,y,s)$
to $(x,y,s)$ before hitting $\Delta$ for $x+y=\ell$. Then
$$EN(x,y,1)={\cal G}_{\blacktriangle}((0,0,1);(x,y,1))\cdot n=\frac{\gamma(x,y)}{(1-\rho)}\cdot n$$
and
$$EN(x,y,2)={\cal G}_{\blacktriangle}((0,0,1);(x,y,2))\cdot n=\frac{\kappa(x,y)}{(1-\rho)}\cdot n.$$ Hence, if $N=\sum_{x+y=\ell}(N(x,y,1)+N(x,y,2))$, as $\ell\to\infty$,
$$\frac{N(x,y,1)}{N}\approx \frac{\gamma(x,y)}{\sum_{x+y=\ell}(\gamma(x,y)+\kappa(x,y))}= \frac{\gamma(x,y)}{(1-\rho)}\approx \frac{\alpha(x)}{\ell(1-\rho)}$$
and
$$\frac{N(x,y,2)}{N}\approx  \frac{\beta(y)}{\ell(1-\rho)}.$$

\begin{table*}
\caption{Values multiplied by $10^6$ and rounded}
\label{tablesmall}
\begin{tabular}{@{}lrrrrrrrrrr@{}}
\hline
 $x$  & \multicolumn{1}{c}{$1$}
& \multicolumn{1}{c}{$2$} & \multicolumn{1}{c}{$3$}
& \multicolumn{1}{c}{$4$} & \multicolumn{1}{c}{$3$}
& \multicolumn{1}{c}{$6$}
& \multicolumn{1}{c}{$7$} & \multicolumn{1}{c}{$8$}
& \multicolumn{1}{c}{$9$} & \multicolumn{1}{c}{$10$}\\
\hline
$N(x,y,1)/N$&  339  &594  &816 &1030 &1179 &1283  &1384  &1479  &1518  &1581\\
$\alpha(x)/(\ell(1-\rho))$&  334 & 605  &825 &1004 &1150  &1267 &1362 &1439  &1501  &1551  \\
\hline
\end{tabular}
\end{table*}

We took $\lambda_1  = 0.3$, $\lambda_2  = 0.15$ and $\mu= 0.55$. We took $\ell=600$. In Figure \ref{comparesim} the simulated curve  is $N(x,y,1)/N$ for $x+y=600$ while the theoretical
curve is $\alpha(x)/(\ell(1-\rho))$. The curves for $N(x,y,2)/N$ and $\beta(y)/(\ell(1-\rho))$ are similarly close.
The maximum relative error between $N(x,y,1)/N$ and $\alpha(x)/(\ell(1-\rho))$ for $x\in \{1,2,\ldots ,\ell\}$ is $5.2$ percent.
The fit for small values of $x$ is even better (see Table \ref{tablesmall}).
Our experimental results lead to the following conjecture.\\
 \noindent {\bf Conjecture:} In the spiral-spiral case, for  $x+y=\ell$,
$\pi(x,y,1))\sim \rho^{\ell}\frac{\alpha(x)}{\ell}$ and $\pi(x,y,2))\sim \rho^{\ell}\frac{\beta(y)}{\ell}$
as $\ell\to\infty$
where $\alpha$ is given at (\ref{approx1}) and  $\beta$ is given at (\ref{approx2})
and where $C_1$ and $C_2$ are given at (\ref{foundC1}).

\section{Traffic at a road closure}\label{trafficoverload}
A service period consists of the time it takes for a car to cross the section of road under repair. We will assume service time takes one unit of time. Traffic travels one way until the queue is emptied and all the cars complete the crossing. Then the cars in the other direction are released and traffic continues one way until that queue is emptied and all cars have completed the crossing. During a service period a random number of cars arrive at queue~1 having
density $f$ and a random number arrive at queue~2 having density $g$. We assume the number of arrivals in a period
are independent of the number of arrivals during other service periods and also independent of the arrivals at the other queue.
Moreover we assume $\lambda_1+\lambda_2<1$ where $\lambda_1=\sum_{n=0}^{\infty}nf(n)$ and $\lambda_1=\sum_{n=0}^{\infty}ng(n)$

We describe this system in the same way we did for the polling system; i.e.  state of the system is denoted by $(x,y,s)$ where $(x,y)$ is the joint queue length of queues 1 and 2 and $s \in \{1,2\}$ is the queue being served. When $(x,y) = (0,0)$, then $s = 1$.  Hence
\begin{eqnarray*}
K((x,y,1);(x+u-1,y+v,1))&=&f(u)g(v)\mbox{ for }x>1\\
K((1,y,1);(0,y+v,2))&=&f(0)g(v)\mbox{ for }x=1\\
K((x,y,2);(x+u,y+v-1,2))&=&f(u)g(v)\mbox{ for }y>1\\
K((x,1,2);(x+u,0,1))&=&f(u)g(0)\mbox{ for }y>1.
\end{eqnarray*}
We won't add a switch over time to avoid complications.

 Note that $(x,0,2)$ for $x\geq 0$ and $(0,y,1)$ for $y \geq 1$ are not in the state space $S$.
 Let $F(z)=\sum_{n=0}^{\infty}f(n)z^n$ and $G(w)=\sum_{n=0}^{\infty}g(n)w^n$.   We note that $\psi(\gamma)=\gamma^{-1}F(\gamma)G(\gamma)$ equals $1$ when $\gamma=1$.
 Moreover assuming both queues have nonzero probability of at least one arrival during a busy period
we have $\psi(\gamma)\to\infty$ as $\gamma\to\infty$. Next,
\begin{eqnarray*}
\frac{d}{d\gamma}\psi(\gamma)|_{\gamma=1}&=&-1+F^{\prime}(1)+G^{\prime}(1)\\
&=&-1+\lambda_1+\lambda_2<0.
\end{eqnarray*}
Finally $\psi(\gamma)$ is convex so there exists a unique point $\alpha>1$ such that $\psi(\alpha)=1$ unless either of $F(\alpha)$ or $G(\alpha)$ is infinite.
We will just assume finiteness.

 Define the function $h(x,y,s)=h(x,y,s,k)=\alpha^{x+y}$.  We first check $h$ is harmonic at points $(x,y,s)$ if $(x,y)\neq (0,0)$:
 \begin{eqnarray*}
 & &\sum_{u,v}h(x+u-1,y+v,1)f(u)g(v)=\frac{F(\alpha)G(\alpha)}{\alpha}h(x,y,1)=h(x,y,1) \mbox{ if }x>1\\
 & &\sum_{u>0,v}h(u,y+v,1)f(u)g(v)+h(0,y+v,2)f(0)g(v)=\frac{F(\alpha)G(\alpha)}{\alpha}h(x,y,1)\\
 & &=h(1,y,1) \mbox{ if }x=1\\
 & &\sum_{u,v}h(x+u,y+v-1,2)f(u)g(v)=\frac{F(\alpha)G(\alpha)}{\alpha}h(x,y,2)=h(x,y,2)\mbox{ if } y>1\\
& & \sum_{u,v>0}h(x+u,y+v-1,2)f(u)g(v)+h(x+u,0,1)f(u)g(0)\\
& &=\frac{F(\alpha)G(\alpha)}{\alpha}h(x,1,2)
=h(x,1,2) \mbox{ if }y=1.
 \end{eqnarray*}
 We can therefore twist the joint probability of $n$ customer arrivals during a service period to queue~1 and $m$ to queue~2 from
 $f(n)g(m)$ to $f(n)\alpha^ng(m)\alpha^m/\alpha$:
\begin{eqnarray*}
\lefteqn{\mathcal{K}((x,y,1);(x+m-1,y+n,1))}\\&=&K((x,y,1);(x+m-1,y+n,1))\frac{h(x+m-1,y+n,1)}{h(x,y,1)}\\
&=&f(n)\alpha^ng(m)\alpha^m/\alpha\mbox{ if }x>1
\end{eqnarray*}
\begin{eqnarray*}
\lefteqn{\mathcal{K}((1,y,1);(x+m-1,y+n,1,1))}\\
&=& K((1,y,1);(x+m-1,y+n,1,1))\frac{h(x+m-1,y+n,1,1)}{h(1,y,1)}\\
&=&        f(n)\alpha^ng(m)\alpha^m/\alpha.
\end{eqnarray*}
and
\begin{eqnarray*}
\lefteqn{\mathcal{K}((x,y,2);(x+m,y+n-1,2))}\\
&=&K((x,y,2);(x+m,y+n-1,2))\frac{h(x+m-1,y+n,2)}{h(x,y,2)}\\
&=&f(n)\alpha^ng(m)\alpha^m/\alpha
\end{eqnarray*}
\begin{eqnarray*}
\lefteqn{\mathcal{K}((x,1,2);(x+m,y+n-1,2,1))}\\
&=&K((x,1,2);(x+m,n,2))\frac{h(x+m,n,2,1)}{h(x,y,2)}\\
&=&f(n)\alpha^ng(m)\alpha^m/\alpha.
\end{eqnarray*}
These are all probability kernels by the definition of $\alpha$.

The joint density of the twisted increment on sheet~1 is
$$(f(n)\alpha^n/\alpha)(g(n)\alpha^m)=(\frac{f(n)\alpha^n/\alpha}{F(\alpha)/\alpha})(\frac{g(m)\alpha^m}{G(\alpha)}).$$
The marginal mean increment in $x$ is
$$\frac{1}{F(\alpha)/\alpha}\sum_{n=0}^{\infty}(n-1)f(n)\alpha^n/\alpha=\frac{\alpha}{F(\alpha)/\alpha}\frac{d}{d\gamma}(F(\gamma)/\gamma)|_{\gamma=\alpha}
=\alpha \frac{d}{d\gamma}(\log(F(\gamma)/\gamma))|_{\gamma=\alpha}.$$
The marginal mean increment in $y$ is
$$\frac{1}{G(\alpha)}\sum_{m=0}^{\infty}mg(m)\alpha^m
=\alpha \frac{d}{d\gamma}(\log(G(\gamma)))|_{\gamma=\alpha}.$$
The mean increment in the total number in the system is therefore
\begin{eqnarray*}
& &\alpha \frac{d}{d\gamma}(\log(F(\gamma)/\gamma))|_{\gamma=\alpha}+\alpha \frac{d}{d\gamma}(\log(G(\gamma)))|_{\gamma=\alpha}
=\alpha \frac{d}{d\gamma}(\log(\frac{F(\gamma)G(\gamma)}{\gamma}))\\
&=&\alpha \frac{d}{d\gamma}(\log(\psi(\gamma)))|_{\gamma=\alpha}.
\end{eqnarray*}
But this is positive by the construction of $\alpha$.  We conclude the twisted chain is transient on sheet~1. The same is true on sheet~2.

We again see there is a spiral or a ray on sheet~1 if
$$\alpha \frac{d}{d\gamma}(\log(F(\gamma)/\gamma))|_{\gamma=\alpha}<0,\mbox{ or }
\alpha \frac{d}{d\gamma}(\log(F(\gamma)/\gamma))|_{\gamma=\alpha}>0.$$
Similarly there is a spiral or a ray on sheet~2 if
$$\alpha \frac{d}{d\gamma}(\log(G(\gamma)/\gamma))|_{\gamma=\alpha}<0\mbox{ or }
\alpha \frac{d}{d\gamma}(\log(G(\gamma)/\gamma))|_{\gamma=\alpha}>0.$$
We won't go further but we do see the ray-spiral phenomenon is fairly common.

\begin{appendix}

\section{Using the Foster-Meyn-Tweedie criterion}\label{Condition}
Let
\(
h_\Delta \coloneqq h \indicator{\Delta}.
\)
In the proof of Prop.~\ref{uniformbound},
we need to know that
when the ray condition (R1) holds on sheet 1
\begin{equation}\label{pi-h-on-Delta}
	\pi h_\Delta
	=
	\sum_{z \in \Delta}
	\pi(z)
	h(z)
	=
\sum_{x = 0}^\infty
\rho^{-x}
\pi(x,0,1)
<
\infty.
\end{equation}
It is easy to see that
\(
\sum_{x = 0}^\infty
\alpha^x
\pi(x,0,1)
<
\infty
\)
for
\(
\alpha < \rho^{-1}
\)
since the stationary distribution for $x$ jobs in the system is
\(
(1 - \rho)\rho^x.
\)
It is reasonable to suspect that the tail of
\(
\pi(x,0,1)
\)
is even lighter when
(R1) holds.
We give two results in this section.
The first result Prop.~\ref{weneedit} shows that
\begin{equation}\label{pi-alpha-on-Delta}
\sum_{x = 0}^\infty
\alpha^x
\pi(x,0,1)
<
\infty
\end{equation}
for a value of $\alpha > \rho^{-1}$
provided that both (R1) and (R2) hold.
The second result
Prop.~\ref{weneedit2}
uses an approach inspired by Chang and Down \cite{Chang-Down-2002,Chang-Down-2007}
to show that
\eqref{pi-h-on-Delta} holds when only (R1) is assumed to hold.

 \begin{proposition}\label{weneedit}
	 If conditions (K), (R1), and (R2) hold,
	 then \eqref{pi-alpha-on-Delta} holds with
	 \[
	 \alpha = \alpha_E =
	 \frac{1 - 2\sqrt{\lambda_2 \mu}}{\lambda_1}
	 >
	 \rho^{-1},
	 \]
	 which means that \eqref{pi-h-on-Delta} holds.
 \end{proposition}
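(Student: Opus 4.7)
The plan is to reduce to the bridge-case asymptotics established earlier in the paper. Writing $a \coloneqq \sqrt{\mu\lambda_1}$ and $b \coloneqq \sqrt{\mu\lambda_2}$, the normalization $\lambda + \mu = 1$ yields $a^2 + b^2 = \mu\lambda$, while (R1) and (R2) become $a > \lambda$ and $b > \lambda$ respectively. First I would verify $\alpha_E > \rho^{-1}$: substituting $a^2 = \mu\lambda - b^2$ into the algebraic form of this inequality reduces it to $\lambda^2 + b^2 > 2\lambda b$, i.e.\ $(\lambda - b)^2 > 0$, which is strict by (R2).

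The hard part is to verify the bridge-case condition $\beta_T > \beta_E$, equivalent to $2a + b < 1 = \mu + \lambda$. The constraints $a + b > 2\lambda$ and $(a+b)^2 \leq 2(a^2+b^2) = 2\mu\lambda$ together force $\mu > 2\lambda$. I then maximize $2a + b$ over the closed feasible arc $\{a^2+b^2 = \mu\lambda,\, a \geq \lambda,\, b \geq \lambda\}$ via Lagrange multipliers. The stationary direction is $(a,b) \propto (2,1)$, giving the interior critical point $(2\sqrt{\mu\lambda/5},\sqrt{\mu\lambda/5})$ with value $\sqrt{5\mu\lambda}$; this lies in the feasible arc exactly when $\mu \geq 5\lambda$. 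In that case $\sqrt{5\mu\lambda} < \mu + \lambda$ reduces to $\mu^2 - 3\mu\lambda + \lambda^2 > 0$, whose roots (in $r = \mu/\lambda$) are $(3\pm\sqrt{5})/2 \approx 2.618$ and $0.382$, so the inequality holds automatically since $\mu/\lambda \geq 5$. When $2\lambda < \mu < 5\lambda$ the critical point is infeasible, and the maximum on the arc is at the endpoint $b = \lambda$ (which dominates the endpoint $a = \lambda$ iff $\mu > 2\lambda$), yielding $2\sqrt{\lambda(\mu-\lambda)} + \lambda$; comparing to $\mu+\lambda$ simplifies to $(\mu - 2\lambda)^2 > 0$, strict since $\mu > 2\lambda$. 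In all cases $2a + b < 1$, establishing the bridge case.

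With $\beta_T > \beta_E$ in hand, Corollary~\ref{goodenough} gives $\pi(0, y, 2) = O(\beta_T^{-y} y^{-3/2})$, so $\sum_y y \beta_E^y \pi(0, y, 2) < \infty$ since $\beta_E < \beta_T$. Theorem~\ref{four} then yields $\pi(x, 1, 2) \sim C_1 \alpha_E^{-x} x^{-3/2}$ for a finite positive constant, and Theorem~\ref{goodenough3} transfers this to $\pi(x, 0, 1) \sim C_2 \alpha_E^{-x} x^{-3/2}$. Summing gives
\[
\sum_{x = 0}^{\infty} \alpha_E^x \pi(x, 0, 1) = O\!\Bigl(\sum_{x \geq 1} x^{-3/2}\Bigr) < \infty,
\]
completing the proof. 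The main obstacle is the constrained optimization argument for $2a + b < 1$; the rest is direct algebra and a clean invocation of prior theorems.
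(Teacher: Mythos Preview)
Your argument is correct and establishes the result, but it takes a genuinely different route from the paper's. The paper proves Proposition~\ref{weneedit} directly via a Lyapunov function: setting $V(x,y,s)=\alpha_E^x\beta_E^y$ (which is harmonic on sheet~2), one computes on sheet~1 that $KV-V=-rV$ with $r=\mu(\beta_E^{-1}-\alpha_E^{-1})$; condition (R2) forces $\beta_E<\rho^{-1}<\alpha_E$, hence $r>0$, and the Foster--Meyn--Tweedie criterion immediately yields $\sum_x\alpha_E^x\pi(x,0,1)<\infty$. No optimization, no appeal to Section~3.

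Your route is longer but gives more: you show (via the constrained optimization for $2a+b<\mu+\lambda$) that (R1)+(R2) always forces the bridge case $\beta_T>\beta_E$, and then ride the asymptotics of Theorems~\ref{four} and~\ref{goodenough3} to get $\pi(x,0,1)\sim C\,\alpha_E^{-x}x^{-3/2}$, from which summability is immediate. There is no circularity, since those theorems rest only on $\sum_x x\alpha_T^x\pi(x,0,1)<\infty$, which follows from $\alpha_T<\rho^{-1}$ independently of the appendix. The trade-off is clear: the paper's Lyapunov argument is short and self-contained; yours is heavier but harvests the side result that the ray-ray regime is automatically a bridge case, together with the precise decay rate of $\pi(x,0,1)$.
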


\begin{proof}
	Define the Lyapunov function
	\(
	V(x,y,s) \coloneqq \alpha_E^x \beta_E^y.
	\)
	where
	the point $(\alpha_E, \beta_E)$
	is the easternmost point on Curve~2
	as defined in \eqref{rightmost}.
	We will show that
	\begin{equation}
		\label{V3}
	KV - V \leq -f + g
	\end{equation}
	where
	$f$ and $g$ are nonnegative functions with
	\(
	r \alpha_E^x
	\leq
	f(x,0,1)
	\)
	for all $x$
	where
	\(
	r > 0
	\)
	and
	\(
	\pi g
	=
	\sum_{z \in S}
	\pi(z) g(z)
	<
	\infty.
	\)
	It would then follow from
	Theorem~14.3.7 of \cite{MeynTweedie} that
	\(
	\pi f \leq \pi g;
	\)
	hence,
	\[
	r
	\sum_{x = 0}^\infty \pi(x,0,1) \alpha_E^x
	\leq
	\pi f
	\leq
	\pi g
	<
	\infty.
	\]

	Let
	\(
	f(x,y,s) = r V(x,y,1) \delta_1(s)
	\)
	where
	\(
	r = \mu(\beta_E^{-1} - \alpha_E^{-1})
	\)
	and $\delta$ is the Kronecker delta function;
	i.e.,
	\[
	\delta_x(y)
	\coloneqq
	\delta_{x,y}
	\coloneqq
	\begin{cases}
	1,
	&\text{for $x = y$;}
	\\
	0,
	&\text{for $x \neq y$.}
	\end{cases}
	\]
	Note that
	\(
	r > 0.
	\)
	This
	follows from the following argument.
	Let
	$(\hat{\alpha}, \hat{\beta})$ be any other solution
	lying on Curve~2: $\lambda_1\alpha+\lambda_2\beta+\mu\beta^{-1}=1$.  Then
	\(
	\hat{\alpha} < \alpha_E.
	\)
	The second component of the gradient of
	\(
	\lambda_1 {\alpha} + \lambda_2 {\beta} + \mu_2 {\beta}^{-1}
	\)
	evaluated at
	$(\alpha_E, \beta_E)$
	is zero.
	If the second component
	evaluated at
	$(\hat{\alpha}, \hat{\beta})$
	is positive, then
	\(
	\beta_E < \hat{\beta}.
	\)
	Let
	$(\hat{\alpha}, \hat{\beta}) = (\rho^{-1},\rho^{-1})$,
	which lies on
	Curve~2.
	Under (R2), the second component of the gradient
	evaluated at
	$(\rho^{-1}, \rho^{-1})$
	is positive,
	so
	\[
	\beta_E < \rho^{-1} < \alpha_E,
	\]
	which means that $r > 0$.

	Since $V$ is harmonic on sheet 2 and
	\(
	f(x,y,2) = 0,
	\)
	we know that
	\eqref{V3}
	holds on sheet 2.
Let $g(z)$ be zero at all states except
	\[
	g(0,0,1) = KV(0,0,1) + f(0,0,1)
	=
	\lambda_1 \alpha_E + \lambda_2 \beta_E + \mu + r.
	\]
	Then \eqref{V3} also holds at state $(0,0,1)$.

	For the rest of sheet 1, notice that
	\begin{align*}
		KV(x,y,1)
		&=
		\lambda_1 V(x + 1, y, 1)
		+
		\lambda_2 V(x, y + 1, 1)
		+
		\mu V(x - 1, y, 1)
		\\
		&=
		(1 - \mu(\beta_E^{-1} - \alpha_E^{-1}))
		V(x,y,1)
		\\
		&=
		(1 - r)
		V(x,y,1)
	\end{align*}
	where we used the sheet 2 identitiy
	\(
	1 = \lambda_1 \alpha_E + \lambda_2 \beta_E + \mu \beta_E^{-1}.
	\)
	Thus, on sheet 1 away from the origin,
	\[
	KV(x,y,1) - V(x,y,1) \leq -r V(x,y,1),
	\]
	and we have shown that \eqref{V3} holds, which completes the proof.
\end{proof}

\begin{proposition}\label{weneedit2}
If conditions (K) and (R1) hold,
then
 \eqref{pi-h-on-Delta}
 holds.
 \end{proposition}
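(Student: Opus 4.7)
The plan is to apply the comparison theorem (Theorem~14.3.7 of \cite{MeynTweedie}) exactly as in Prop.~\ref{weneedit}, but with a Lyapunov function that exploits~(R1) instead of the joint assumption~(R1)+(R2). Recall that in Prop.~\ref{weneedit} we used $V = \alpha_E^x\beta_E^y$, and the positivity of the drift constant $r = \mu(\beta_E^{-1} - \alpha_E^{-1})$ required $\beta_E < \alpha_E$, which is exactly~(R2). Under~(R1) alone this may fail, so I replace the sheet-2 egg by the sheet-1 egg.

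First I would pick $(\alpha^*,\beta^*)$ on Curve~1, i.e.\ $\lambda_1\alpha^* + \lambda_2\beta^* + \mu(\alpha^*)^{-1}=1$, with
\[
\rho^{-1} < \alpha^* < \mu/\lambda_1,\qquad 0 < \beta^* < \rho^{-1}.
\]
Such a point exists because Prop.~\ref{determine} gives $\alpha_T = \sqrt{\mu/\lambda_1} < \rho^{-1}$ under~(R1), so moving east along Curve~1 past $(\rho^{-1},\rho^{-1})$ we stay on the curve with $\alpha$ strictly between $\rho^{-1}$ and its right-most positive value $(1+\sqrt{1-4\lambda_1\mu})/(2\lambda_1)$, while $\beta$ descends below $\rho^{-1}$. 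I then set $V(x,y,1) = (\alpha^*)^x(\beta^*)^y$ on sheet~1 and $V(x,y,2) = D(\alpha^*)^x(\beta^*)^y$ on sheet~2, with the constant $D\geq 1$ to be chosen. Then $V$ is $K$-harmonic on the interior of sheet~1 (by construction), while on sheet~2 a direct computation gives $KV - V = \mu(\beta^{*-1}-\alpha^{*-1})V$, strictly positive because $\beta^* < \rho^{-1} < \alpha^*$.

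Next I would compute $KV-V$ carefully at every boundary state. On sheet~1 interior $KV-V=0$; on the sheet switch line $\{(1,y,1):y\geq 1\}$ the transition to $(0,y,2)$ contributes $\mu(D - 1)\beta^{*y}$, which can be made nonpositive by taking $D=1$ (or, if a strict negative drift is wanted on this line, by slightly lowering $D$). On sheet~2 interior I get the positive term $\mu(\beta^{*-1}-\alpha^{*-1})V$ above; on the return line $\{(x,1,2)\}$ the transition to $(x,0,1)$ contributes an additional correction proportional to $(\alpha^*)^x$ that one handles by choosing $D$ appropriately. Setting $r = (\alpha^*-1)(\mu-\lambda_1\alpha^*)/\alpha^*>0$, the sheet-1 boundary column $\{(x,0,1)\}$ yields $f(x,0,1) \geq r (\alpha^*)^x \geq r \rho^{-x}$, which is the lower bound on $f$ that Theorem~14.3.7 needs.

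The hard step, and the one I would spend the most effort on, is showing $\pi g < \infty$. With this choice of $V$ the positive part is
\[
g(x,y,s) \;=\; \mu(\beta^{*-1}-\alpha^{*-1})\,D(\alpha^*)^x(\beta^*)^y\,\mathbf 1_{s=2} \;+\; \text{boundary terms},
\]
so $\pi g$ involves $\sum_{x,y}\pi(x,y,2)(\alpha^*)^x(\beta^*)^y$. The na\"\i ve bound $\pi(x,y,2)\leq(1-\rho)\rho^{x+y}$ is useless here because $\alpha^*>\rho^{-1}$. Following the Chang–Down philosophy \cite{Chang-Down-2002,Chang-Down-2007}, the way out is to exploit the fact that on sheet~2 queue~2 is actively served and the $y$-marginal is stochastically dominated by an $M/M/1$ queue with arrival $\lambda_2$ and service $\mu$. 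Concretely, a second Lyapunov function $W(x,y,2)=(\alpha^*)^x\eta^y$ with $\eta\in(1,\mu/\lambda_2)$ satisfies a \emph{strict} geometric drift on the interior of sheet~2, yielding $\pi W<\infty$ by Meyn–Tweedie Theorem~15.0.1; since $\beta^*<\rho^{-1}<\mu/\lambda_2$ one can take $\eta \geq \beta^*$ and dominate the offending series by $\pi W$. The boundary contributions at the switch line $\{(x,1,2)\}$ are handled by combining $W$ with a third, bounded correction at $y=1$, exactly as Chang and Down do when patching their Lyapunov across a service boundary. Once $\pi g<\infty$ is in hand, Theorem~14.3.7 gives $r\sum_x \rho^{-x}\pi(x,0,1)\leq \pi f \leq \pi g < \infty$, which is \eqref{pi-h-on-Delta}. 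The main obstacle, as noted, is the patching step controlling the sheet-2 boundary contribution to $g$; everything else is mechanical algebra on the two eggs.
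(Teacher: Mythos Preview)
Your proposal has a gap at the step where you claim $f(x,0,1)\ge r(\alpha^*)^x$ on $\Delta$. With $(\alpha^*,\beta^*)$ on Curve~1 and $V(x,y,1)=(\alpha^*)^x(\beta^*)^y$, the kernel at $(x,0,1)$ ($x\ge 2$) has exactly the same three jumps (east, north, west with rates $\lambda_1,\lambda_2,\mu$) as at an interior point of sheet~1, so
\[
KV(x,0,1)-V(x,0,1)=\bigl(\lambda_1\alpha^*+\lambda_2\beta^*+\mu(\alpha^*)^{-1}-1\bigr)(\alpha^*)^x=0,
\]
and there is no negative drift on $\Delta$ at all. Your formula $r=(\alpha^*-1)(\mu-\lambda_1\alpha^*)/\alpha^*$ is what one obtains with $\beta^*=1$, contradicting your stated choice; taking $\beta^*=1$ does give $KV-V=-rV$ on all of sheet~1, but then on sheet~2 the drift is $+\lambda_1(\alpha^*-1)V$ and your second Lyapunov $W(x,y,2)=(\alpha^*)^x\eta^y$ is circular. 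To apply any Meyn--Tweedie theorem to $W$ you must handle $KW-W$ on sheet~1 too, and there its positive part is again proportional to $(\alpha^*)^x\eta^y$ with $\alpha^*>\rho^{-1}$, so bounding $\pi g_W$ is at least as hard as~\eqref{pi-h-on-Delta}. Indeed, when (R1) holds and (R2) fails one checks by linearizing at $(\rho^{-1},\rho^{-1})$ that there is no point with first coordinate exceeding $\rho^{-1}$ lying inside both eggs, so no product-form Lyapunov can close the loop.

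The paper sidesteps this by abandoning product form. With $E=\{(0,0,1)\}$ and $\mathcal{J}=\mathcal{K}$ the twisted kernel, it takes $V=h\,G_E$ where $G_E(i)$ is the expected number of visits to $\Delta$ by the $\mathcal{J}_E$-chain from $i$. Then $\mathcal{J}G_E-G_E=-\indicator{\Delta}$ holds \emph{exactly} off $E$ (Lemma~\ref{notdumb}), so $KV-V\le -h_\Delta+b\,\indicator{E}$ with the sole positive contribution at the single state $(0,0,1)$; Proposition~\ref{pi-h-prop} then gives~\eqref{pi-h-on-Delta}. The only thing to check is $H_E((0,0,1))<\infty$, and this uses (R1) alone: under $\mathcal{K}$ the $x$-component on sheet~1 has drift $\tilde\lambda_1-\tilde\mu>0$, so from any $(x,0,1)$ the twisted chain escapes to infinity (never looping back through sheet~2 to $\Delta$) with probability bounded below by $\tilde\lambda_1(1-\tilde\mu/\tilde\lambda_1)>0$, making the number of such loops geometric and the total $\Delta$-visits of finite mean by Wald.
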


 We delay the proof of Prop.~\ref{weneedit2} until
 Subsection~\ref{Downs-approach}.

\subsection{Nonnegative matrices and the Chang-Down approach}\label{N-T}

Chang and Down have \cite{Chang-Down-2002,Chang-Down-2007} have developed
a novel approach to showing that
\(
\pi h_\Delta < \infty.
\)
We revisit the Chang-Down approach,
but first
we need to develop several results for
nonnegative matrices.
Let $\mathcal{J}$
(i.e., ``script J'') be a matrix.
Assume that the elements are indexed by pairs of
elements of $S$ where $S$ is countable.
We assume that $\mathcal{J}$ satisfies the following condition:
\begin{hyp}{$\mathcal{J}$}
	\textit{For all $(i,j) \in S^2$,
	\(
	\mathcal{J}(i,j) \geq 0,
	\)
	\(
	\mathcal{J}^n(i,j)
	\)
	is finite,
	and
	$\mathcal{J}$ is irreducible.  That is,
	for any $i$ and $j$, there exists
	\(
	n = n(i,j) \geq 0
	\)
	such
	\(
	\mathcal{J}^n(i,j) > 0.
	\)
}
\end{hyp}

Let $E$ and $\Delta$ be nonempty subsets of $S$ where $E$ contains a
finite number
of elements but $\Delta$ may contain an infinite number of elements.
Define
\begin{align}
	\mathcal{J}_E(i,j)
	&\coloneqq
	\begin{cases}
	\mathcal{J}(i,j),
	&\text{for $i \not\in E $;}
	\\
	0,
	&\text{for $i \in E$.}
	\end{cases}
	\label{dfn:JE}
	\\
	\mathcal{J}_E(i,\Delta)
	&\coloneqq
	\sum_{j \in \Delta}
	\mathcal{J}_E(i,j)
	\notag
\end{align}
Let
\(
\mathcal{J}_E^n
\)
be the $n$th power of $\mathcal{J}_E$ where
\(
\mathcal{J}_E^0(i,j) = \delta_{i,j}.
\)
Note that
\(
\mathcal{J}_E^n(k, \Delta) = 0
\)
if $n \geq 1$ and $k \in E$
and that
\(
\mathcal{J}_E^0(k, \Delta) = \indicator{\Delta}(k).
\)
Define the column vectors $G_E$ and $H_E$ as
\begin{align}
	G_E(i)
	&\coloneqq
	\sum_{n \geq 0}
	\mathcal{J}_E^n(i,\Delta)
	\label{dfn-G}
	\\
	H_E
	&\coloneqq
	\mathcal{J} G_E.
	\label{dfn-H}
\end{align}

If
\(
\mathcal{J}_E
\)
is substochastic,
then
$G_E(i)$
can be interpreted as the expected number of visits to
$\Delta$ until hitting $E$ or being killed.
The following expression for  $G_E$ will prove useful.
\begin{align}
	G_E(i)
	&=
	\sum_{n \geq 0}
	\mathcal{J}_E^n(i,\Delta)
	\notag
	\\
	&=
	\indicator{\Delta}(i)
	+
	\sum_k
	\mathcal{J}_E(i,k)
	\sum_{n \geq 1}
	\mathcal{J}_E^{n - 1}(k,\Delta)
	\notag
	\\
	&=
	\indicator{\Delta}(i)
	+
	\sum_k
	\mathcal{J}_E(i,k)
	G_E(k)
	\notag
	\\
	&=
	\begin{cases}
	\indicator{\Delta}(i)
	+
	\mathcal{J}G_E(i)
	=
	\indicator{\Delta}(i)
	+
	H_E(i),
	&\text{for $i \not\in E$;}
	\\
	\indicator{\Delta}(i),
	&\text{for $i \in E$.}
	\end{cases}
	\label{eqnforGE}
\end{align}

We will need to know when $H_E$ is finite.  Here are two sufficient conditions.
\begin{lemma}
	\label{lemma:script-J}
	Assume condition $(\mathcal{J})$.
	If there exists an $i \in S$ such that
	\begin{equation}
		\label{eqn:finite-sum}
	\sum_{n = 1}^\infty
	\mathcal{J}^n(i,\Delta)
	<
	\infty,
	\end{equation}
	then
	\(
	H_E(i) < \infty
	\)
	for all $i \in S$.
\end{lemma}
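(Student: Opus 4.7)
The plan is to reduce the claim to the simpler statement that $\sum_{n\ge 1}\mathcal{J}^n(i,\Delta)<\infty$ for \emph{every} $i\in S$, and then to derive that statement from the hypothesis by transporting summability along paths guaranteed by the irreducibility of $\mathcal{J}$.

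First I would dominate $G_E$ by the ``unkilled'' potential. Since $\mathcal{J}_E$ is obtained from $\mathcal{J}$ by zeroing out the rows indexed by $E$, one has $\mathcal{J}_E(k,\ell)\le \mathcal{J}(k,\ell)$ entrywise; a straightforward induction on $n$ gives $\mathcal{J}_E^n(k,\ell)\le \mathcal{J}^n(k,\ell)$ for every $n\ge 0$ and every pair $(k,\ell)$. Summing over $\ell\in\Delta$ and then over $n\ge 0$ yields
\[
G_E(k)\;\le\; \sum_{n\ge 0}\mathcal{J}^n(k,\Delta)\;=\; \indicator{\Delta}(k)+\sum_{n\ge 1}\mathcal{J}^n(k,\Delta).
\]
Substituting this into the definition $H_E(i)=\sum_k \mathcal{J}(i,k)G_E(k)$ and exchanging the (nonnegative) sums gives
\[
H_E(i)\;\le\; \mathcal{J}(i,\Delta)+\sum_{n\ge 1}\sum_k \mathcal{J}(i,k)\mathcal{J}^n(k,\Delta)\;=\; \sum_{n\ge 1}\mathcal{J}^n(i,\Delta).
\]
Thus the lemma will be proved once I show that the series $\sum_{n\ge 1}\mathcal{J}^n(i,\Delta)$ is finite for every $i$, given that it is finite at the single index $i_0$ supplied by the hypothesis.

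For that last step I would invoke condition $(\mathcal{J})$: the irreducibility of $\mathcal{J}$ furnishes, for each $i\in S$, an integer $n_i\ge 0$ with $c_i\coloneqq \mathcal{J}^{n_i}(i_0,i)>0$. Then for every $n\ge 0$,
\[
\mathcal{J}^{n+n_i}(i_0,\Delta)\;\ge\; \mathcal{J}^{n_i}(i_0,i)\,\mathcal{J}^n(i,\Delta)\;=\; c_i\,\mathcal{J}^n(i,\Delta),
\]
so summing in $n$ gives
\[
\sum_{n\ge 1}\mathcal{J}^n(i,\Delta)\;\le\; c_i^{-1}\sum_{m\ge 1}\mathcal{J}^m(i_0,\Delta)\;<\;\infty,
\]
and combining with the previous display establishes $H_E(i)<\infty$ for all $i$.

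I do not expect a serious obstacle: the argument is a routine monotonicity-plus-irreducibility chain, and the hypothesis that every $\mathcal{J}^n(i,j)$ is finite is used only implicitly (to make sense of the partial sums before we conclude they converge). The only step that deserves a bit of care is the interchange of sums in the bound for $H_E(i)$, which is legitimate because all entries are nonnegative (Tonelli).
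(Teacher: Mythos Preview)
Your proof is correct and follows essentially the same approach as the paper: dominate $\mathcal{J}_E^n$ by $\mathcal{J}^n$ to bound $H_E(i)\le\sum_{n\ge1}\mathcal{J}^n(i,\Delta)$, and use irreducibility to propagate finiteness of that series from the given $i_0$ to all $i$. The only cosmetic difference is that the paper first argues (by contrapositive) that $\sum_{n\ge1}\mathcal{J}^n(i,\Delta)<\infty$ for all $i$ and then performs the domination, whereas you do these two steps in the opposite order.
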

\begin{proof}
	If
	\(
	\sum_{n = 1}^\infty
	\mathcal{J}^n(i,\Delta)
	=
	\infty
	\)
	for some $i$, then
	it follows from irreducibility that
	\(
	\sum_{n = 1}^\infty
	\mathcal{J}^n(i,\Delta)
	=
	\infty
	\)
	for all $i$.
	Hence, from the hypothesis, we know that
	\eqref{eqn:finite-sum}
	holds for all $i \in S$.
	Since
	\(
	\mathcal{J}_E
	\leq
	\mathcal{J},
	\)
	\begin{align*}
		H_E(i)
		&=
		\mathcal{J}
		G_E(i)
		\\
		&=
		\sum_k
		\mathcal{J}(i,k)
		\sum_{n \geq 0}
		\mathcal{J}_E^n(k,\Delta)
		\\
		&\leq
		\sum_k
		\mathcal{J}(i,k)
		\sum_{n \geq 0}
		\mathcal{J}^n(k,\Delta)
		\\
		&=
		\sum_{n \geq 1}
		\mathcal{J}^n(i,\Delta)
		\\
		&<
		\infty.
	\end{align*}
	
\end{proof}

\begin{lemma}
	\label{lemma:extend-HE}
	Assume condition $(\mathcal{J})$.
	If
	\(
	H_E(i) < \infty
	\)
	for all $i \in E$, then
	\(
	H_E(i) < \infty
	\)
	for all $i \in S$.
\end{lemma}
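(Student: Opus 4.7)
The plan is to propagate the finiteness of $H_E$ outward from $E$ along one-step transitions of $\mathcal{J}$, and then invoke the irreducibility in condition $(\mathcal{J})$ to cover every state in $S$.

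The one-step lever will be the elementary inequality
\[
H_E(i) \;=\; \sum_k \mathcal{J}(i,k)\, G_E(k) \;\geq\; \mathcal{J}(i,j)\, G_E(j),
\]
valid for any $i,j\in S$; when $\mathcal{J}(i,j)>0$, finiteness of $H_E(i)$ forces $G_E(j) \leq H_E(i)/\mathcal{J}(i,j) < \infty$. Combining this with \eqref{eqnforGE}, which reads $H_E(j) = G_E(j) - \indicator{\Delta}(j)$ when $j \notin E$, one immediately obtains $H_E(j)<\infty$ in that case; and when $j \in E$, the hypothesis already supplies $H_E(j)<\infty$.

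I will then iterate this one-step propagation. Define $E_0 \coloneqq E$ and, for $n \geq 0$, $E_{n+1} \coloneqq E_n \cup \{j \in S : \mathcal{J}(i,j) > 0 \text{ for some } i \in E_n\}$. A straightforward induction on $n$, using the displayed inequality, yields that $H_E$ is finite on every $E_n$. The remaining step is to verify that $\bigcup_{n\geq 0} E_n = S$: given any $j \in S$, pick any $i_0 \in E$ and use irreducibility to obtain some $m \geq 0$ with $\mathcal{J}^m(i_0,j)>0$; expanding this matrix product forces the existence of a positive-weight chain $i_0 = k_0, k_1, \ldots, k_m = j$ with $\mathcal{J}(k_\ell, k_{\ell+1}) > 0$ for each $\ell$, and by induction $k_\ell \in E_\ell$, so $j \in E_m$. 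The only point requiring care is that the identity $H_E = G_E - \indicator{\Delta}$ from \eqref{eqnforGE} is available only off of $E$; inside the inductive step one must therefore separate the cases $j \in E$ (handled by the hypothesis) and $j \notin E$ (handled by the inequality together with \eqref{eqnforGE}). No deeper obstacle appears.
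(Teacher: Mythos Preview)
Your proof is correct and follows essentially the same route as the paper: both arguments use the one-step inequality $H_E(i)\geq \mathcal{J}(i,j)\,G_E(j)$ together with \eqref{eqnforGE} and the irreducibility in condition~$(\mathcal{J})$ to propagate finiteness from $E$ to all of $S$. The only cosmetic difference is that you run the argument as a forward induction along a path from $E$, whereas the paper argues by contradiction, tracing a path back from a hypothetical infinite state to the last state in $E$.
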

\begin{proof}
	On the contrary, assume there exists $j \not\in E$
	with $H_E(j) = \infty$.
	Since
	\(
	G_E(j) = \indicator{\Delta}(j) + H_E(j),
	\)
	by \eqref{eqnforGE},
	we know that
	$G_E(j)$ is also infinite.
	By irreducibility,
	there exists a path
	\(
	(i_0, i_1, \dotsc, i_n)
	\)
	with
	$i_0 = i$,
	$i_n = j$,
	and
	\(
	\mathcal{J}(i_m, i_{m + 1}) > 0
	\)
	for $m = 0, \dotsc, n - 1$.
	Let $i_k$ be the last time that the path is in $E$; that is,
	\(
	k = \max\set{m < n | i_m \in E}.
	\)
	Hence,
	\[
	H_E(i_k) \geq \mathcal{J} \mathcal{J}_E^{n - k - 1}(i_k, i_n) G_E(i_n) = \infty,
	\]
	which is a contradiction.
\end{proof}
\begin{remark}
	Suppose that states not in $E$ communicate with respect to
	$\mathcal{J}_E$. Then if $G_E(i)$ is infinite for one state
	$i \not\in E$, then $G_E(i)$ and $H_E(i)$ are infinite for
	all states not in $E$.
\end{remark}

\begin{lemma}\label{notdumb}
If
\(
H_E(i)<\infty
\)
for all $i \in E$, then 
\begin{equation}\label{goodasitgets}
	\mathcal{J} G_E - G_E
	\leq
	-\indicator{\Delta} + b \indicator{E}
\end{equation}
for the finite, positive constant $b = \max_{i \in E} H_E(i)$.
\end{lemma}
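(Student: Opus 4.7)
The plan is to verify the inequality pointwise by splitting $S$ into the two regions $i \in E$ and $i \notin E$, and in each case reading off the values of $G_E(i)$ and $\mathcal{J}G_E(i) = H_E(i)$ directly from the characterization \eqref{eqnforGE}. Before doing this, I would first invoke Lemma \ref{lemma:extend-HE} with the hypothesis $H_E(i) < \infty$ for all $i \in E$ to conclude that $H_E(i) < \infty$, and hence $G_E(i) < \infty$, for every $i \in S$. This ensures the quantity $\mathcal{J}G_E - G_E$ is well defined, and that the constant $b = \max_{i \in E} H_E(i)$ is a finite maximum over a finite set.

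For $i \notin E$, the identity \eqref{eqnforGE} gives $G_E(i) = \indicator{\Delta}(i) + H_E(i) = \indicator{\Delta}(i) + \mathcal{J}G_E(i)$, so
\[
\mathcal{J}G_E(i) - G_E(i) = -\indicator{\Delta}(i),
\]
which coincides with the right-hand side of the claim at such $i$, since $\indicator{E}(i) = 0$.

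For $i \in E$, equation \eqref{eqnforGE} simplifies to $G_E(i) = \indicator{\Delta}(i)$, so
\[
\mathcal{J}G_E(i) - G_E(i) = H_E(i) - \indicator{\Delta}(i) \leq b - \indicator{\Delta}(i) = -\indicator{\Delta}(i) + b\, \indicator{E}(i),
\]
by the definition of $b$.

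There is no real obstacle here; the content of the lemma is purely bookkeeping, converting the two branches of \eqref{eqnforGE} into a single pointwise inequality valid on all of $S$. The only mild subtlety is invoking Lemma \ref{lemma:extend-HE} at the start so that the finite bound $b$ automatically propagates from $E$ to the rest of the state space through the identity $H_E = \mathcal{J}G_E$; without that preliminary step one could worry about $\infty - \infty$ issues on the left-hand side away from $E$.
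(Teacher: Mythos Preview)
Your proof is correct and follows essentially the same approach as the paper: both split into the cases $i\in E$ and $i\notin E$ and read off each branch of \eqref{eqnforGE}. The only minor difference is that you explicitly invoke Lemma~\ref{lemma:extend-HE} up front to ensure finiteness of $H_E$ (and hence $G_E$) everywhere, whereas the paper instead notes directly that $G_E \le \indicator{\Delta} + H_E$ gives finiteness on $E$ and that $\mathcal{J}(i,\Delta)<\infty$; either way the well-definedness concern is handled.
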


\begin{proof}
	Since
	\(
	G_E \leq \indicator{\Delta} + H_E
	\)
	by \eqref{eqnforGE},
	it follows that $G_E(i)$ is also finite for all $i\in E$.
	Since
	\begin{align*}
		H_E(i)
		&=
		\mathcal{J}G_E(i)
		=
		\mathcal{J}(i,\Delta)
		+
		\sum_k
		\mathcal{J}(i,k)
		\sum_{n \geq 1}
		\mathcal{J}_E^n(k, \Delta),
	\end{align*}
	it also follows that $\mathcal{J}(i,\Delta)$ is finite for $i \in S$.

	To see that \eqref{goodasitgets} holds for $i \not\in E$, the l.h.s.\
	simplifies to $-\indicator{\Delta}(i)$ after using \eqref{eqnforGE}, so we have equality
	in this case.
	When
	$i \in E$, the l.h.s.\ simplfies to
	\(
	H_E(i) - \indicator{\Delta}(i).
	\)
	By choosing $b = \max_{i \in E} H_E(i)$, the inequality holds.
	The constant $b$ is finite since $E$ is a finite set.
\end{proof}

The Chang-Down approach
to establishing that $\pi h_\Delta < \infty$ is
to establish that
\eqref{goodasitgets}
holds.
The following proposition describes this connection.
\begin{proposition}
	\label{pi-h-prop}
	Let
	\[
	\mathcal{J}(i,j)
	=
	\frac{K(i,j)h(j)}{h(i)}
	\]
	where $0 < h < \infty$.
	Assume that condition
	$(\mathcal{J})$
	holds.
	If
	$H_E(i) < \infty$
	for all $i \in E$,
	then
	\eqref{pi-h-on-Delta}
	holds.
\end{proposition}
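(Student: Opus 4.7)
The plan is to feed the drift inequality of Lemma~\ref{notdumb} into the Foster--Meyn--Tweedie criterion (Theorem~14.3.7 of \cite{MeynTweedie}) applied to the original chain with kernel $K$ and stationary distribution $\pi$. The Lyapunov function I will use is $V \coloneqq h\,G_E$, chosen precisely to undo the $h$-twist relating $\mathcal{J}$ to $K$: using $\mathcal{J}(i,j) = K(i,j)\,h(j)/h(i)$, a one-line computation gives
\[
KV(i) \;=\; \sum_j K(i,j)\,h(j)\,G_E(j) \;=\; h(i)\,\mathcal{J}G_E(i),
\]
so $KV - V = h\,(\mathcal{J}G_E - G_E)$ pointwise.

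Before invoking Foster--Meyn--Tweedie I would verify that $V$ is finite on all of $S$, which is the main technical point. By hypothesis $H_E$ is finite on the finite set $E$, so Lemma~\ref{lemma:extend-HE} extends finiteness of $H_E$ to every state; combined with the identity $G_E = \indicator{\Delta} + H_E$ off $E$ and $G_E = \indicator{\Delta}$ on $E$ from \eqref{eqnforGE}, this yields $G_E < \infty$ pointwise, and hence $V < \infty$ pointwise since $0 < h < \infty$. Multiplying the inequality of Lemma~\ref{notdumb} by the positive function $h$ then produces the clean drift inequality
\[
KV - V \;\leq\; -h\,\indicator{\Delta} + b\,h\,\indicator{E} \;=\; -h_\Delta + b\,h\,\indicator{E}.
\]

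Finally, I would apply Theorem~14.3.7 of \cite{MeynTweedie} with $f \coloneqq h_\Delta$ and $g \coloneqq b\,h\,\indicator{E}$, both nonnegative. Since $g$ is supported on the finite set $E$ where $h$ is finite-valued, $\pi g = b \sum_{i \in E}\pi(i)\,h(i) < \infty$, so the theorem delivers $\pi h_\Delta = \pi f \leq \pi g < \infty$, which is exactly \eqref{pi-h-on-Delta}. The only real obstacle is ensuring that $V$ is finite pointwise so that the drift inequality is unambiguous; this is precisely what Lemma~\ref{lemma:extend-HE} supplies, and the rest is bookkeeping.
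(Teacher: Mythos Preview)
Your proposal is correct and follows essentially the same approach as the paper: define $V = h\,G_E$, use Lemma~\ref{lemma:extend-HE} to ensure $V$ is finite everywhere, multiply the drift inequality \eqref{goodasitgets} from Lemma~\ref{notdumb} through by $h$ to obtain $KV - V \leq -h_\Delta + b\,h\,\indicator{E}$, and conclude via Theorem~14.3.7 of \cite{MeynTweedie}. Your write-up is, if anything, slightly more explicit than the paper's in justifying $\pi g < \infty$ from the finiteness of $E$.
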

\begin{proof}
	Substituting for
	$\mathcal{J}$ in
	\eqref{goodasitgets} gives
	\begin{align*}
		\sum_{j \in S}
		\frac{K(i,j)h(j)}{h(i)} G_E(j) - G_E(i)
		&\leq
		-\indicator{\Delta}(i) + b \indicator{E}(i)
		\\
		\sum_{j \in S}
		K(i,j)h(j) G_E(j) - G_E(i){h(i)}
		&\leq
		-{h(i)}\indicator{\Delta}(i) + b {h(i)}\indicator{E}(i)
		\\
		KV(i) - V(i)
		&\leq
		-h_{\Delta}(i) + b h(i) \indicator{E}(i)
	\end{align*}
	where
	\(
	V(i) = h(i) G_E(i),
	\)
	which is finite for all $i\in S$ by \eqref{eqnforGE} since
	$H_E(i)$ is finite for all $i \in S$
	by
	Lemma~\ref{lemma:extend-HE}.
	Hence, from Theorem~14.3.7 of \cite{MeynTweedie},
	\eqref{pi-h-on-Delta}
	holds.
\end{proof}

There are similarities and differences between the above and pp.~140--142 of
\cite{Chang-Down-2007}.
For example, our $\mathcal{J}$ is not the same as their $K_{\mathcal{W}}$ since the latter
is stochastic,
and hence, our (45) is not exactly the same as
their equation in
Lemma~4.1.5(i).
The biggest difference might be that in
Lemma~4.1.5(ii),
they
assume (20), which would be equivalent to us assuming that
$G_E(i)$ is finite for all $i$.
However, we make the stronger assumption that $H_E(i)$ is finite.
The stronger assumption is necessary to guarantee that our $b$ and their
$B$ or $B'$ is finite.  In particular,
in the proof of
Lemma~4.1.5(ii),
even if their $V_0(y)$ is finite
for all $y$
there is no guarantee that
$B'$ (after taking the maximum over $z$)
is finite.
However, this does not cause a problem
in the application to the polling model,
since the support of $K_W(z, \cdot)$ is finite.

Another difference is our inclusion of Lemma~\ref{lemma:script-J}.
The Chang-Down approach seems like the best way of showing that
\(
\pi h_\Delta < \infty
\)
when only a finite number of rows of $\mathcal{J}$ have row sums greater than
1.
It certainly
works well
for the polling model.
We have included Lemma~\ref{lemma:script-J} in the hope that
it may prove useful in extending the
Chang-Down approach to problems where an infinite number of rows
of $\mathcal{J}$ have row sums greater than 1.

\subsection{Proof of Prop.~\ref{weneedit2}}
\label{Downs-approach}
We use Prop.~\ref{pi-h-prop}.
Let
\(
E = (0,0,1),
\)
\(
\Delta =
\)
$K$ given by \eqref{K-defn},
and $h(x,y,s)=\rho^{-(x+y)}$.
Recall that $\Delta$ is the $x$-axis.
Since $h$ is harmonic except at $E$,
\(
\mathcal{J}
\)
will be stochastic except for the row indexed by state $(0,0,1)$.
(At state $(0,0,1)$,
\(
\mathcal{J}
\)
is superstochastic.)

To make use of the above lemmas we recall Theorem 14.3.7 in \cite{MeynTweedie} which
applies to irreducible Markov chains with steady state $\pi$: if $V$, $f$ and $g$ are two non-negative, finite valued functions on $S$ and
\begin{eqnarray}\label{compare}
KV(i)-V(i)&\leq& -f(i)+s(i)
\end{eqnarray}
 for all $i\in S$ then $\sum_{i\in S}f(i)\pi(i)\leq \sum_{i\in S}s(i)\pi(i)$.
Consequently, to prove $\sum_{i\in S}h(i)\pi(i)<\infty$ it suffices to find a Lyapunov function $V$ such that
$$KV(i)-V(i)\leq -h(i)\chi_{\Delta}+B_0\delta_E \mbox{ for some finite set $E$}$$ and some constant $B$.
As in \cite{Chang-Down-2007}, divide by $h(i)$ and rewrite the above as
$$\sum_{j\in S}K(i,j)\frac{h(j)}{h(i)}\frac{V(j)}{h(j)}- \frac{V(i)}{h(i)}\leq -\chi_{\Delta}+B\delta_E$$
where $B$ is a constant. Now define the kernel $\mathcal{J}(i,j)=K(i,j)h(j)/h(i)$ for $i,j\in S$. By Lemma \ref{notdumb}, $V=G_E$ and $B$ will exist if
$H(i)<\infty$ for all $i$.

 In our ray case $h(x,y,s)=\rho^{-(x+y)}$ and $\Delta =\{(x,0,1):x\geq 0\}$. We pick $E=\{(0,0,1)\}$. Note that  $\mathcal{J}$ is exactly our twisted kernel $\mathcal{K}$
 (as defined at (\ref{free}) which is super-stochastic at $(0,0,1)$).
 Let $i_0=(0,0,1)$ then $H_E(i_0)=\mu+\tilde{\lambda}_1H_E(1,0,1)+\tilde{\lambda}_2 H_E(0,1,2)$. Away from $i_0$ $\mathcal{J}$ is stochastic
with jumps east-west-north on sheet~1 and east-south-north on sheet~2.
Notice that returns to $\Delta$ involve a geometric number of jumps east or west on $\Delta$ followed by a loop through sheet~2 followed by
a geometric number of jumps on $\Delta$ followed by another loop and so on.  The geometric number $N$ of jumps east and west has distribution $P(N=n)=(\tilde{\lambda}_1+\tilde{\mu})^{n}\cdot\tilde{\lambda}_2$
and thus has finite mean. Starting at any $(x,0,1)$ the chance $p$ of simply drifting away on a ray on sheet~1 and never doing another loop is greater than
$\tilde{\lambda}_1(1-\frac{\tilde{\mu}}{\tilde{\lambda}_1})$ since this is the probability the east-west component which has drift $\tilde{\lambda}_1-\tilde{\mu}>0$
drifts to $+\infty$ without returning to $x$. Consequently the number of loops $L$ has finite expectation and if $N_i$ is the number of visits on loop $i$
then the total number of visits to $\Delta$ is $\sum_{i=1}^L N_i$. This has a finite expected value by Wald's Lemma.
Therefore $H_E(0,0,1)<\infty$.
Therefore we have checked the condition for  Lemma \ref{lemma:script-J} and then using Lemma \ref{lemma:extend-HE} we can apply Proposition \ref{pi-h-prop}
to get our result.

 In \cite{Chang-Down-2007} customers arrive at rate $\lambda_i$ to queues $Q_{(i)}, i=1,\ldots ,N$ and are given limited service from a single server. A state can be represented as
 $s=(q_1,q_2,\ldots ,q_N,z,i)$ where the $q_i$ are the respective queue sizes and $z$ is the queue currently being served and $i$ is the number of service completions during the current visit to queue $z$
 ($0\leq i\leq k_z$ where $k_z$ is the service limit; i.e. the maximum number of customers served per visit to queue z).
If a server empties a queue it moves on to the next nonempty queue.
 On page 136 of \cite{Chang-Down-2007} there is a description of the free chain with kernel $\overline{K}$
 constructed from the original chain with kernel $K$ by always serving the customers in queues (1) through (N) up to the service limit {\em even if } the queue becomes zero or negative.
 The original chain would not enter these states because the server would have just moved to the next nonempty queue.
 The set $\Delta\subset S$ is the set of states where $\overline{K}\neq K$. The harmonic function used is
 $h(s)=\rho^{-(q_1+q_2+\ldots +q_{N})}$ where $\rho=\sum_{i=1}^N\lambda_i/\mu<1$. Note that $h$ is harmonic for $\overline{K}$. The associated {\em twisted} or $h$-transformed chain makes  the twisted sum
 of queues $\sum_{i=1}^N \mathcal{Q}_{(i)}$ drift to infinity.

  We note that $h$ is harmonic for $K$ at all points in $S$ including $\Delta$ except for $(0,0,\ldots,0,N,0)$. This is because $h(s)$ only depends on the sum of the queue sizes.
  Consequently a transition from a state $s$ representing a service at queue $i\in\{1,2,\ldots,N\}$ where $q_i=1$ results in a decrease of $(q_1+q_2+\ldots +q_{N})$ by one regardless if the server moves next to another queue as specified by $K$
  or to a state where the server continues serving queue $i$ even when nonempty as specified by $\overline{K}$. $h$ is harmonic at $s$ either way.
  At $e=(0,0,\ldots,0,N,0)$, $h$ is not harmonic for $K$ so $\mathcal{J}$ is not stochastic and is in fact super-harmonic:
  $$\sum_f \overline{\mathcal{K}}(e,f)=(\sum_{i=1}^N\lambda_i)\rho^{-1}+\mu\rho=1$$ while
 $$\sum_f \mathcal{J}(e,f)=(\sum_{i=1}^N \lambda_i)\rho^{-1}+\mu>1.$$
 Consequently take $E=\{e\}$. The proof that $H_E(e)<\infty$ follows because away from $\Delta$, $\mathcal{J}=\overline{\mathcal{K}}$ and
  $\sum_{i=1}^N \mathcal{Q}_{(i)}$ has a positive drift as in \cite{Chang-Down-2007}.

 Lemma 4.1.4 in \cite{Chang-Down-2007} follows from (\ref{compare}) with $f(i)=h(i)\chi_{\Delta}(i)$ and $g(i)=B_0\chi_E(i)$
 where $E$ is a finite set. The  proof of (18) given  in \cite{Chang-Down-2007} is doubtful because there is no guarantee $\sum_i\pi(i)V(i)<\infty$.
 Moreover the hypotheses of Lemma 4.1.5 don't say anything about $B'$ being finite in the proof of (ii) implies (i). In spite of these problems that are fixed with
 Lemma \ref{lemma:extend-HE} and Lemma \ref{notdumb}, the general idea is excellent and we wish we had thought of it ourselves.
   We anticipate this method will find other applications.

\section{Twisting toward a given direction}
\cite{doob} and \cite{hennequin} showed the extremal harmonic functions on $\ZZ^d$ are of the form
$h(x)=\exp(\alpha\cdot x)$. We want to use these harmonic functions
 to twist our random walk into the direction of a vector $\beta$.
 The question is which directions $\beta$ can be achieved?
  In our polling example  $S_{\rho}=\{(1,0),(-1,0),(0,1)\}$ and $\rho$ defines probabilities $\lambda_1$, $\mu$ and $\lambda_2$ to these points respectively.
 We want to twist in all directions $\beta=(\beta_1,\beta_2)$ with $\beta_2\geq 0$:

 Proposition 5.3 in \cite{hennequin}
 provides an answer but the proof is incomplete.  Theorems VII.4.3 and VII.4.4 in \cite{Ellis} provide a partial answer to our question. There it is shown that for any $\beta$ in the interior of
  the closed convex hull of the support $S_{\rho}$ there exists a $\theta$ such that $\beta=\nabla\Lambda(\theta)$; i.e.
 $$\beta=\sum_x x\frac{e^{\theta\cdot x}}{\mgf(\theta)}\rho(x).$$
 However this twist generates a factor $\mgf(\theta)$ which messes up the calculation of the probability of large deviation. We want to find
 a $\theta$ such that $\mgf(\theta)=1$ and such that the twist drifts in direction $\beta$.

Consider a distribution  on $\ZZ^d$ having distribution $\rho$ which may be substochastic.
The distribution $\rho$  has support $S_{\rho}$.
 Recall that
the moment generating function of $\rho$ is
$\mgf(\theta)=\sum_x e^{\theta\cdot x}\rho(x)$.
As in Condition 6.2.1~(a) in \cite{Ellis-Dupuis} we assume $\mgf(\theta)<\infty$ for all $\theta$.
By (a) in Lemma 6.2.3 in \cite{Ellis-Dupuis} $\mgf(\theta)$ is a convex function differentiable everywhere.
Define $\Lambda(\theta)=\log(\mgf(\theta))$.
$\Lambda(\theta)$ is convex and differentiable on its domain $dom(\Lambda)=R^d$.

We shall consider a random walk with increments  having distribution $\rho$ on the convex cone $\mathcal{C}$ with base $o$ generated by $S_{\rho}$
whre $o=(0,\ldots ,)\in R^d$.
Hence the random walk has kernel $\overline{K}(x,y)=\overline{K}(o,y-x)=\rho(y-x)$ on the state space $S=\ZZ^d\cap \mathcal{C}$. As in our polling example $S$ may be a strict subset of
$ \ZZ^d$.
We do suppose $o$ is standard in the sense of Dynkin \cite{Dynkin}; i.e. $0<\overline{G}(o,y)$ for all $y\in \mathcal{C}$ where
$\overline{G}(x,y)=\sum_{k=0}^{\infty}\overline{K}^k(x,y)$ is the Greens function. $\overline{G}(o,y)$ is finite when $\rho$ is substochastic or when the mean drift $m$ is nonzero.
Consider a new probability measure with support
	\(
	o \cup S_{\rho}
	\)
	where
	\(
	\rho
	\)
	is scaled down by a factor $q$
	and
	the remaining mass $1 - q$ is assigned to $o$.
	The new probability measure has
	moment generating function
	\(
	\mgf_q(\theta) = q \mgf(\theta) + 1 - q
	\)
	and
	cumulant generating function
	\(
	\Lambda_q(\theta) = \log(\mgf_q(\theta)).
	\)
	Notice that
	\(
	\nabla \mgf_q(\theta) = q \nabla \mgf(\theta).
	\)
	Also notice that
	\(
	\mgf_q(\theta) = 1
	\)
	iff
	\(
	\mgf(\theta) = 1
	\)
	iff
	\(
	\Lambda_q(\theta) = 0
	\)
	iff
	\(
	\Lambda(\theta) = 0.
	\)

Consequently if we can find $\theta^*$ and $\lambda^*$ such that
such that
	\(
	\mgf_q(\theta^*) = 1
	\)
	and
	\(
	\nabla \mgf_q(\theta^*) = \lambda^* q\beta.
	\)
	Then,
	\(
	\Lambda_q(\theta^*)
	=
	\Lambda(\theta^*)
	=
	0.
	\)
	Also
	\(
	\nabla \mgf(\theta^*)
	=
	\lambda^* \beta;
	\)
i.e. we have found our twist for the original kernel.

Consequently, without loss of generality we assume $o \in S_{\rho}$. We summarize our assumptions in this Appendix
\begin{itemize}
\item	\textit{$\mgf$ is finite everywhere.}
\item \textit{Either $\rho$ is substochastic (i.e. $\rho(\ZZ^d)<1$) or $\rho$ is stochastic (i.e. $\rho(\ZZ^d)=1$) and the mean $m=\nabla\Lambda(0)\neq 0$.}
\item \textit{$o \in S_{\rho}$.}
\end{itemize}

Define $U^d$ to be the unit sphere.  Our goal is to find a twist in any direction $\beta\in U^d\cap \mathcal{C}$.
The Fenchel-Legendre transformation of  $\Lambda(\theta)$ is
$\Lambda^*(\beta)=\sup_{\theta}(\theta\cdot \beta-\Lambda(\theta))$.
$\Lambda^*(\beta)$ is also convex and differentiable on its domain
$dom(\Lambda^*)=\{\beta:\Lambda^*(\beta)<\infty\}$. Let $conv (S_{\rho})$ denote the convex hull of the set $S_{\rho}$.
The relative interior of the convex hull generated by $S_{\rho}$  is denoted by $ri(conv (S_{\rho}))$
in \cite{Ellis-Dupuis}.  By Lemma 6.2.3~(d) in \cite{Ellis-Dupuis}  $ri(conv (S_{\rho}))$ is equal to the relative interior of $dom(\Lambda^*)$ which is denoted by
$ri(dom(\Lambda^*))$; i.e. $ri(conv (S_{\rho}))=ri(dom(\Lambda^*))$.
For any $\alpha \in ri(conv (S_{\rho}))=ri(dom(\Lambda^*))$, Lemma 6.2.3 in \cite{Ellis-Dupuis}
shows, $\Lambda^*(\alpha)$ is attained at a point $\theta\in \mathcal{D}_{\Lambda}$ such that $\alpha=\nabla\Lambda(\theta)$.

\begin{theorem}\label{goanyway}
 Pick a direction $\beta\in U^d$ in the relative interior of $\mathcal{C}$.
 Then there exists a unique $\lambda^*>0$, a unique $\theta^*$ such that $\mgf(\theta^*)=1$ and an exponential change of measure such that
\begin{eqnarray}\label{ourresult}
\beta=\frac{1}{\lambda^*}\nabla\mgf(\theta^*)=\frac{1}{\lambda^*}\sum xe^{\theta^*\cdot x}\rho(x).
\end{eqnarray}
Moreover the map $\beta\to \overline{K}^{\theta^*}$ is one-to-one and continuous on the interior of $U^d\cap \mathcal{C}$.
This means $\overline{K}^{\theta^*}$, the $h^{\theta^*}$-transform of the kernel $\overline{K}$  with respect to $h^{\theta^*}(x)=\exp(\theta^*\cdot x)$ has a mean drift in the direction $\beta$.
\end{theorem}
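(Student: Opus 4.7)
The plan is to recast the existence claim as a convex-geometric variational problem on the convex body $D^{\mgf} = \{\theta \in \mathbb{R}^d : \mgf(\theta) \leq 1\}$. The desired conditions $\mgf(\theta^*) = 1$ and $\nabla \mgf(\theta^*) = \lambda^* \beta$ with $\lambda^* > 0$ are precisely the KKT conditions for $\theta^*$ to maximise the linear functional $\theta \mapsto \beta \cdot \theta$ over $D^{\mgf}$; any maximiser automatically lies on $\partial D^{\mgf}$ because $\beta \neq 0$. Thus it suffices to show that this maximum is attained at a unique point and that the outward normal at that point is parallel to $\beta$.

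First I would identify the recession cone of $D^{\mgf}$. If $v \cdot x \leq 0$ for every $x \in S_\rho$, then $e^{t v \cdot x} \leq 1$ on $S_\rho$, so $\mgf(\theta + tv) \leq \mgf(\theta)$ for all $t \geq 0$ and $v$ is a recession direction. Conversely, if $v \cdot x_0 > 0$ for some $x_0 \in S_\rho$ with $\rho(x_0) > 0$, then $\mgf(\theta + tv) \geq e^{\theta \cdot x_0} e^{t v \cdot x_0}\rho(x_0) \to \infty$, so $v$ is excluded. Hence the recession cone equals the polar cone $\mathcal{C}^{\circ}$. Because $\beta$ lies in the relative interior of $\mathcal{C}$ and the nonsingularity of $\rho$ forces $\mathcal{C}$ to be full-dimensional, a bipolar argument gives the strict inequality $\beta \cdot v < 0$ for every nonzero $v \in \mathcal{C}^\circ$. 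A standard recession-compactness argument then rules out escaping maximising sequences: if $|\theta_n| \to \infty$ along a maximising sequence and $\theta_n / |\theta_n| \to v$, then $v \in \mathcal{C}^\circ$ is nonzero, so $\beta \cdot \theta_n \to -\infty$, contradicting maximality. The maximum is therefore attained at some $\theta^* \in \partial D^{\mgf}$.

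Uniqueness comes from strict convexity of $\mgf$, which in turn follows from the nonsingularity hypothesis: since the support $S_\rho$ is not contained in any affine hyperplane, $\nabla^2 \mgf(\theta)$ is everywhere positive definite, so $\partial D^{\mgf}$ is a strictly convex smooth hypersurface. Such a hypersurface admits a unique supporting hyperplane in each outward direction, so $\theta^*$ is unique; the normality relation $\nabla \mgf(\theta^*) = \lambda^* \beta$ then forces the unique value $\lambda^* = \lvert \nabla \mgf(\theta^*) \rvert / \lvert \beta \rvert > 0$ (positivity because $\nabla \mgf$ points outward on $\partial D^{\mgf}$).

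For the final claim, smooth dependence of $(\theta^*, \lambda^*)$ on $\beta$ follows from the implicit function theorem applied to the system $(\nabla \mgf(\theta) - \lambda \beta,\, \mgf(\theta) - 1) = 0$: its Jacobian in $(\theta, \lambda)$ is nonsingular because the $\theta$-block is the positive-definite Hessian $\nabla^2 \mgf(\theta^*)$ and the extra row carries the nonzero gradient $\nabla \mgf(\theta^*)$. Continuity (in fact smoothness) of $\beta \mapsto \overline{K}^{\theta^*}$ follows since the twisted kernel depends continuously on $\theta^*$; injectivity is immediate from the variational uniqueness, because two distinct $\beta$'s cannot share a common normal direction at the same boundary point. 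I expect the main technical obstacle to be executing the recession-cone argument carefully enough to make visible why the theorem requires $\beta$ to lie in the \emph{relative interior} of $\mathcal{C}$: on $\partial \mathcal{C}$ one finds a nonzero $v \in \mathcal{C}^\circ$ with $\beta \cdot v = 0$, along which a maximising sequence can escape to infinity without the objective decreasing, so neither existence nor uniqueness of $\theta^*$ need hold. This is exactly the boundary phenomenon that the appendix's subsequent remarks handle by a limiting procedure.
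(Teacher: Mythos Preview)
Your proof is correct and takes a genuinely different route from the paper's. You recast the problem as maximizing the linear functional $\theta\mapsto\beta\cdot\theta$ over the convex body $D^{\mgf}=\{\mgf\le 1\}$, identify its recession cone as $\mathcal{C}^{\circ}$, and read off $(\theta^*,\lambda^*)$ from the KKT conditions. The paper instead works on the dual side: it introduces $g(\lambda)=\Lambda^*(\lambda\beta)/\lambda$ with $\Lambda^*$ the Fenchel--Legendre transform of $\Lambda=\log\mgf$, shows $g(\lambda)\to\infty$ at both endpoints of its domain (using substochasticity or $m\neq 0$ as $\lambda\downarrow 0$, and steepness of $\Lambda^*$ from Rockafellar's duality at the upper endpoint), takes $\lambda^*$ to be the interior minimizer, and then recovers $\theta^*=\nabla\Lambda^*(\lambda^*\beta)$ via the conjugacy relations, the key step being the computation $\Lambda(\theta^*)=\lambda^*\beta\cdot\nabla\Lambda^*(\lambda^*\beta)-\Lambda^*(\lambda^*\beta)=0$ coming from $g'(\lambda^*)=0$.

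Your argument is more elementary: the recession-cone identification $\mathrm{rec}(D^{\mgf})=\mathcal{C}^{\circ}$ is a clean geometric substitute for the paper's endpoint analysis of $g$, and you avoid the Legendre--Fenchel machinery and the appeal to essential-smoothness/steepness duality altogether. The paper's route, on the other hand, makes the large-deviations interpretation transparent (the common optimal value $\beta\cdot\theta^*=g(\lambda^*)$ is exactly the directional rate), which is why the authors set it up that way. One small caveat: you invoke nonsingularity to get strict convexity of $\mgf$ and full-dimensionality of $\mathcal{C}$, whereas the appendix's listed hypotheses omit that assumption (hence ``relative interior'' in the statement). This is not a real discrepancy, though, since the paper's own uniqueness claim (``only one point $\theta^*$ such that $\nabla\Lambda(\theta^*)$ points in direction $\beta$'') tacitly needs the same nondegeneracy, and the lower-dimensional case is handled separately by the projection argument immediately after the theorem.
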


\begin{proof}

 Consider the function $g(\lambda)=\Lambda^*(\lambda\beta)/\lambda$.
 There must exist some $\gamma$ such that $\gamma\beta$ is the  interior of $conv(S_{\rho})$, the convex cone generated by $S_{\rho}$ so
 $g(\gamma)<\infty$.
 By the convexity of $\Lambda^*$ and the fact that $o \in S_{\rho}$, the domain of $g$; i.e. $\{\lambda : g(\lambda) <\infty\}$ is an interval
  $(0< \overline{\lambda})$ containing $\gamma$.

Suppose $\overline{\lambda}=+\infty$. Then $g(\lambda)\geq \theta\cdot\beta-\Lambda(\theta)/\lambda$ for all $\theta$. Take $\theta=t\beta$ where $t>0$ so
$g(\lambda)\geq t|\beta|^2-\Lambda(t\beta)/\lambda$. Let $\lambda\to\infty$ so
$\liminf_{\lambda\to\infty}g(\lambda)\geq t|\beta|^2$ where $t$ is arbitrarily large. It follows that $\liminf_{\lambda\to\infty}g(\lambda)=\infty$.
Consequently if $\overline{\lambda}=+\infty$ then $\liminf_{\lambda\to\infty}g(\lambda)=\infty$.

On the other hand if $\overline{\lambda}<+\infty$ then either $\Lambda^*(\overline{\lambda}\beta)=\infty$ in which case
$g(\overline{\lambda})=\infty$ or $\Lambda^*(\overline{\lambda}\beta)<\infty$. In this case
\begin{eqnarray*}
\lim_{\lambda\uparrow \overline{\lambda}}g^{\prime}(\lambda)
&=&\lim_{\lambda\uparrow \overline{\lambda}}\left(\frac{\lambda\frac{d\Lambda^*(\lambda\beta)}{d\lambda}-\Lambda^*(\lambda\beta)}{\lambda^2}\right)\\
&\geq&\frac{1}{\overline{\lambda}^2}\left(\overline{\lambda}\lim_{\lambda\uparrow \overline{\lambda}}\frac{d\Lambda^*(\lambda\beta)}{d\lambda}-\Lambda^*(\overline{\lambda}\beta)\right)\\
&=&\infty
\end{eqnarray*}
because
$\frac{d\Lambda^*(\lambda\beta)}{d\lambda}\to +\infty$ as $\lambda\uparrow \overline{\lambda}$
since $\Lambda^*$ is steep at the boundary (see Theorem 1 in \cite{rocky} which shows $\Lambda^*$ is steep or essentially smooth because $\Lambda$ is).
Hence $g^{\prime}(\lambda)\to \infty$ as $\lambda\uparrow \overline{\lambda}$ when
$\overline{\lambda}<\infty$.

The other boundary of the domain of $g(\lambda)$ is at $\lambda=0$.
As above $g(\lambda)\geq \theta\cdot\beta-\Lambda(\theta)/\lambda$. Pick $\theta$ such that $\Lambda(\theta)<0$.
This is certainly possible in the substochastic case because $\Lambda(0)<0$. In the stochastic case  when $\nabla \Lambda(0)=m\neq 0$,  $\Lambda(\theta)<0$ if we pick $\theta$ close to zero in the direction $-m$.
Hence $-\Lambda(\theta)/\lambda\to +\infty$ as $\lambda \downarrow 0$. Hence, $g(\lambda)\to \infty$ as $\lambda\to 0$.

We have proven $g(\lambda)\to\infty$ as   $\lambda\to \overline{\lambda}=\infty$ or $\lambda\to 0$ so $g$ can't have a minimum at
endpoints like this. If $\overline{\lambda}<\infty$ then either $\lim_{\lambda\uparrow \overline{\lambda}}g(\lambda)\to\infty$ or
$\lim_{\lambda\uparrow \overline{\lambda}}g^{\prime}(\lambda)=\infty$.
Either way it follows that there exist a  $\lambda^*$
such that $g^{\prime}(\lambda^*)=0$ where $0<\lambda^*<\overline{\lambda}$.

Consequently $inf_{\lambda>0}\Lambda^*(\lambda\beta)/\lambda$ is finite and is achieved at  $\lambda^*$. At this value
\begin{eqnarray*}
\frac{d}{d\lambda}\left(\frac{\Lambda^*(\lambda\beta)}{\lambda}\right)|_{\lambda=\lambda^*}
=\frac{\lambda^*\nabla \Lambda^*(\lambda^*\beta)\cdot \beta-\Lambda^*(\lambda^*\beta)}{\lambda^2}=0.
\end{eqnarray*}
So
\begin{eqnarray}\label{xx}
\lambda^*\nabla \Lambda^*(\lambda^*\beta)\cdot \beta-\Lambda^*(\lambda^*\beta)=0.
\end{eqnarray}

Next,
\begin{eqnarray}
\Lambda^*(\lambda^*\beta)=\theta^*\cdot \lambda^*\beta-\Lambda(\theta^*) \mbox{ where }\label{reverse}\\
\lambda^*\beta=\nabla \Lambda(\theta^*)=\sum \frac{xe^{\theta^*\cdot x}}{\mgf(\theta^*)}\rho(x).\label{partoftheway}
\end{eqnarray}
Since $\Lambda$ is convex and differentiable it follows that $\Lambda^{**}=\Lambda$
by D2.6 in \cite{Ellis} or \cite{rockafellar}. Hence
$\Lambda(\theta^*)=\sup_{v}(v\cdot\theta^*-\Lambda^*(v))$ where the sup is attained at $v^*$ where $\theta^*=\nabla \Lambda^*(v^*)$.
On the other hand, by (\ref{reverse}), $\Lambda(\theta^*)=\theta^*\cdot \lambda^*\beta-\Lambda^*(\lambda^*\beta)$
so the supremum is attained at $v^*=\lambda^*\beta$ and hence $\theta^*=\nabla \Lambda^*(\lambda^*\beta)$.

Next,
\begin{eqnarray*}
\Lambda(\theta^*)&=&\lambda^*\beta\cdot\nabla\Lambda^*(\lambda^*\beta)-\Lambda^*(\lambda^*\beta)\\
&=&0\mbox{ by (\ref{xx}),}
\end{eqnarray*}
 i.e. $\mgf(\theta^*)=1$.
Now
(\ref{ourresult}) follows from (\ref{partoftheway}).

The uniqueness of $\theta^*$ follows from the convexity of  the level curve $\Lambda(\theta)=0$ since there is only one point $\theta^*$
such that $\nabla\Lambda(\theta^*)$ points in direction $\beta$. This fixes $\lambda^*$ since
$\lambda^*\beta=\nabla\Lambda(\theta^*)$.

The map of $\beta\to \lambda^*$ defined by (\ref{xx}) is differentiable on  the interior of $U^d\cap \mathcal{C}$.
Hence the map of
$$\beta\to \lambda^*\beta\to \theta^*\to h^{\theta^*}\to \overline{K}^{\theta^*}$$
 given by (\ref{ourresult}) is differentiable
on  the interior of $U^d\cap \mathcal{C}$.
\end{proof}
\begin{remark}
$\overline{G}(x,y)=\overline{G}^{\theta^*}(x,d)e^{-\theta^*\cdot (y-x)}$ and $\overline{G}(0,y)=\overline{G}^{\theta^*}(0,y)e^{-\theta^*\cdot y}$
so
$$k(x,y)=\frac{\overline{G}(x,y)}{\overline{G}(0,y)}=\frac{\overline{G}^{\theta^*}(x,y)}{\overline{G}^{\theta^*}(0,y)}e^{\theta^*\cdot x}
=k^{\theta^*}(x,y_n)e^{\theta^*\cdot x}.$$
Pick a sequence of $y_{\ell}$ such that $|y_n|\to\infty$ in direction $\beta$. Corollary 1.3 in \cite{Ney-Spitzer} shows
$k^{\theta^*}(x,y_{\ell})\to 1$ so $k(x,y_{\ell})\to e^{\theta^*\cdot x}$ (This is a statement about the smoothness of $\overline{G}^{\theta^*}(0,y)$ and irreducibility on $\ZZ^d$ is unnecessary). Hence the direction $\beta=\lambda^*\nabla \Lambda(\theta^*)$ corresponds to a point on the Martin boundary associated with the
extremal harmonic function $e^{\theta^*\cdot x}$.
\end{remark}

It may be the case that  the support of $S_{\rho}$ before adding $o$ is contained in a hyperplane in $R^n$.  A simple example
would be  $S_{\rho}=\{(1,0),(0,1)\}$ with arbitrary weights. Nevertheless Theorem \ref{goanyway} applies. Moreover in such a case
$\lambda^*\beta$ must lie in this hyperplane because
\begin{equation}
	\label{twisted-drift}
\lambda^*\beta
=
\nabla \mgf(\theta^*)
=
\sum xe^{\theta^*\cdot x}\rho(x);
\end{equation}
i.e. $\lambda^*\beta$
 is a convex combination
	of the vectors in the support of $\rho$, which means that
	 $\lambda^*\beta$ lies in the hyperplane.

Now let $\overline{\rho}$ be a countable probability measure on a discrete lattice in $R^{n+m}$. Suppose
the convex cone $\overline{\mathcal{C}}$ with base $o\in R^{n+m}$ generated by $S_{\overline{\rho}}$.
is  an affine space in $R^{n+m}$ of the form
\begin{eqnarray*}
\overline{S}&=&\{(x_1,\ldots ,x_n,y_1,\ldots y_m):\mbox{ where }(x_1,\ldots ,x_n)\in R^n;\\
& &\mbox{ and }y_k=L_k(x_1,\ldots ,x_n), k=1,\ldots ,m\}
\end{eqnarray*}
where $L_k$ are linear functions of $(x_1,\ldots ,x_n)$ for $k=1,\ldots ,m$. Letting $x=(x_1,\ldots ,x_n)^T$ and
$$y=(L_1(x_1,\ldots ,x_n),\ldots ,L_m(x_1,\ldots ,x_n))^T=Lx$$ we can express the coordinates of points in the support of $\overline{\rho}$ by
$(x,Lx)$. The moment generating function can be written
$$\mgf_{\overline{\rho}}(\theta,\alpha)=\sum_{x}\exp(\theta\cdot x+\alpha\cdot Lx)\overline{\rho}(x,Lx).$$

Suppose $\overline{\beta}$ is a given direction
in the relative interior of $\overline{\mathcal{C}}$. Consequently $\overline{\beta}=(\beta,L\beta)$
where $\beta$ is in the relative interior of the cone   $\mathcal{C}$ with base $o\in R^n$ generated by the projection of the point masses from $R^{n+m}$ to $R^n$.
Denote the projected point mass by $\rho$; i.e. $\rho(x)=\overline{\rho}(x,Lx)$.
By Theorem \ref{ourresult} there exists a unique $\lambda^*>0$ in $R^n$ and a unique $\theta^*$ in $R^n$ such that
$\mgf_{\rho}(\theta^*)=1$
and an exponential change of measure such that
\begin{eqnarray}
\beta=\frac{1}{\lambda^*}\nabla\mgf_{\rho}(\theta^*)=\frac{1}{\lambda^*}\sum xe^{\theta^*\cdot x}\rho(x).
\end{eqnarray}
Moreover
$$\nabla\mgf_{\overline{\rho}}(\theta,\alpha)=\sum_{(x,y)}(x,Lx)\exp(\theta\cdot x+\alpha\cdot Lx)\overline{\rho}(x,Lx)$$
and evaluating at $\theta=\theta^*$ and $\alpha=0$ we get
\begin{eqnarray*}
\nabla\mgf_{\overline{\rho}}(\theta^*,0)&=&\sum_{(x,Lx)}(x,Lx)\exp(\theta^*\cdot x)\rho(x)\\
&=&(\lambda^*\beta,L(\lambda^*\beta))=\lambda^*(\beta,L\beta).
\end{eqnarray*}
We therefore have an exponential change of measure to point the mean in direction $(\beta,L\beta)$.

We now address the issue of a direction  $\beta\in U^d$ on the boundary of $\mathcal{C}$.
Consider a convex cone $\mathcal{C}^E$ with base  $o$ supporting $\mathcal{C}$
of the form $(x,Lx):x\in R^n,Lx\in R^m$ such that $n+m=d$.
We suppose $S^E_{\rho}\setminus\{o\}$ is nonempty where
$S^E_{\rho}=\mathcal{C}^E\cap S_{\rho}$. Further we assume $\mathcal{C}^E$ is contained in the affine hull of $S^E_{\rho}$;
i.e. $conv(S^E_{\rho})$ is contained in the relative interior of $\mathcal{C}^E$ and
 $\mathcal{C}^E$ is subspace (or face or edge) of $\mathcal{C}$.
 Let $\rho^E$ denote the restriction
of $\rho$ to $S^E_{\rho}$; i.e. zero out $\rho(z)$ for $z\notin S_{\rho}^E$. This makes $\rho^E$ substochastic but we have assumed $\rho^E(S^E_{\rho}\setminus\{o\})>0$.
As above $z\in \mathcal{C}^E$ can be written $(x,Lx)$.
Define $\mgf_E(\eta)=\sum_{ z} e^{\eta\cdot z}\rho^E(z)$
and $\Lambda_E(\theta)=\log(\mgf^E(\theta))$. Here where $\eta=(\theta,\alpha)$ as above so
 $$\mgf_E(\eta)=\sum_{ x} e^{\theta\cdot x+\alpha\cdot Lx}\rho^E(z).$$

Let $\Lambda_E^*$ denote the associated Fenchel-Legendre transformation.
The following is a corollary of Theorem \ref{goanyway}:
\begin{corollary}\label{dotheedge}
If $\beta\in U^d$ in the relative interior of $\mathcal{C}^E$ where  $\rho^E\neq 0$
then
  there exists unique $\lambda_E^*>0$ and  $\theta_E^*$ such that $\mgf_E(\theta_E^*,0)=1$ and an exponential change of measure such that
\begin{eqnarray}\label{ourresult2}
\beta=\frac{1}{\lambda_E^*}\sum_{ x\in S^E_{\rho}}x \nu_E(x)\mbox{ where } \nu_E(x)=e^{\theta_E^*\cdot x}\rho^E(x).
\end{eqnarray}
\end{corollary}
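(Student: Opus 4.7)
The plan is to reduce the corollary to a direct application of Theorem~\ref{goanyway} in the smaller ambient space $R^n$ in which the face $\mathcal{C}^E$ is full-dimensional. Because $z\in\mathcal{C}^E$ can be parameterised by its first $n$ coordinates as $z=(x,Lx)$, the measure $\rho^E$ can be pushed forward to a measure $\hat\rho$ on $\ZZ^n$ by $\hat\rho(x):=\rho^E((x,Lx))$. Since we zeroed out all mass of $\rho$ outside $\mathcal{C}^E$, this $\hat\rho$ is strictly substochastic (it has total mass strictly less than one, or equal to one in the degenerate case where $\rho$ was already supported on $\mathcal{C}^E$, in which case we are already in the situation of Theorem~\ref{goanyway}). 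Its moment generating function $\hat\mgf(\theta)=\sum_x e^{\theta\cdot x}\hat\rho(x)$ is finite for every $\theta\in R^n$ because it is dominated by $\mgf(\theta,0)$. The cone in $R^n$ generated by the support $\hat S:=\{x:\hat\rho(x)>0\}$ is the projection $\hat{\mathcal{C}}$ of $\mathcal{C}^E$ onto its first $n$ coordinates, and the hypothesis that $\mathcal{C}^E$ is contained in the affine hull of $S^E_\rho$ guarantees that $\hat{\mathcal{C}}$ is full-dimensional in $R^n$.

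Next I would translate the direction: write $\beta=(\hat\beta,L\hat\beta)$ with $\hat\beta\in R^n$. Because $\beta$ is assumed to lie in the relative interior of $\mathcal{C}^E$ and the map $x\mapsto(x,Lx)$ is a linear bijection between the parameter space and $\mathcal{C}^E$, the vector $\hat\beta$ lies in the interior of $\hat{\mathcal{C}}$. All three bulleted hypotheses of Theorem~\ref{goanyway} now hold for $\hat\rho$: $\hat\mgf$ is finite everywhere; $\hat\rho$ is substochastic (the case $\hat\rho(\ZZ^n)<1$), so no mean condition is required; and we may assume $o\in\hat S$ by the same reduction used in the appendix (shift a fraction of mass to the origin, which only rescales the eventual $\lambda^*$). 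Applying Theorem~\ref{goanyway} to the pair $(\hat\rho,\hat\beta)$ produces a unique $\lambda_E^*>0$ and a unique $\theta_E^*\in R^n$ such that
\[
\hat\mgf(\theta_E^*)=1,\qquad \lambda_E^*\hat\beta=\nabla\hat\mgf(\theta_E^*)=\sum_{x\in\hat S}x\,e^{\theta_E^*\cdot x}\hat\rho(x).
\]

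To finish I would lift these back to the full ambient space. Since $\hat\mgf(\theta_E^*)=\mgf_E(\theta_E^*,0)$ by construction, the first equation gives $\mgf_E(\theta_E^*,0)=1$. For the drift identity, writing $\nu_E(x)=e^{\theta_E^*\cdot x}\rho^E(x)=e^{\theta_E^*\cdot x}\hat\rho(x)$, the first $n$ coordinates of $\sum_x(x,Lx)\nu_E(x)$ equal $\lambda_E^*\hat\beta$ by the displayed equation above, and the last $m$ coordinates equal $L(\lambda_E^*\hat\beta)=\lambda_E^*L\hat\beta$ by linearity of $L$. Hence $\sum_x(x,Lx)\nu_E(x)=\lambda_E^*(\hat\beta,L\hat\beta)=\lambda_E^*\beta$, which is precisely \eqref{ourresult2}. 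Uniqueness of the pair $(\lambda_E^*,\theta_E^*)$ transfers directly from Theorem~\ref{goanyway} through the identification $\hat\mgf(\cdot)=\mgf_E(\cdot,0)$.

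The one place where care is needed—the step I expect to be the main obstacle—is verifying that the relative interior of $\mathcal{C}^E$ (in which $\beta$ lies) really projects to the interior of $\hat{\mathcal{C}}$ and coincides with the relative interior of $\mathrm{conv}(\hat S)$, so that Lemma~6.2.3 of \cite{Ellis-Dupuis} (applied inside the proof of Theorem~\ref{goanyway}) delivers an attained supremum in the Fenchel–Legendre duality. This requires exactly the hypothesis that $\mathcal{C}^E$ be contained in the affine hull of $S^E_\rho$; without it, $\hat\beta$ could sit on a boundary face of $\hat{\mathcal{C}}$ and the function $g(\lambda)=\hat\Lambda^*(\lambda\hat\beta)/\lambda$ might fail to be finite. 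Once this geometric compatibility is in hand, the steepness argument from the proof of Theorem~\ref{goanyway} carries over verbatim and produces the minimising $\lambda_E^*$.
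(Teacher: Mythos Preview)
Your proposal is correct and follows the same route as the paper: the paragraphs immediately preceding the corollary set up exactly this projection--apply--lift argument (parameterise $\mathcal{C}^E$ as $(x,Lx)$, push $\rho^E$ down to $R^n$, invoke Theorem~\ref{goanyway}, and lift back via linearity of $L$), and the paper then simply declares the result ``a corollary of Theorem~\ref{goanyway}''. One small simplification: since the appendix already assumes $o\in S_\rho$ and $o$ is the base of $\mathcal{C}^E$, we have $o\in S_\rho^E$ automatically, so your mass-shifting caveat is unnecessary.
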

Define  $h^E(x)=\exp(\theta^*\cdot x)\chi_{\mathcal{C}^E}(x)$ where $\mathcal{C}^E$ is the convex cone  with base $0$ generated by $S^E_{\rho}$.
Then the $h^E$-transform of the kernel $\overline{K}$ restricted to $\mathcal{C}^E$ with respect to $h^E$ has a mean drift in the direction $\beta$.

In the polling example when the server serves the first queue, if we take $\beta =(1,0)$  then $\mathcal{C}^E$ is the $x$-axis and $\rho^E(1,0)=\lambda_1$ and $\rho^E(0,-1)=\mu_1$.
$\exp(\theta^*)$ satisfies $\lambda_1 e^{\theta^*}+\mu e^{-\theta^*}=1$; i.e.
$exp(\theta^*)=\frac{1+\sqrt{1-4\lambda_1 \mu}}{2\lambda_1}$.
$h^E(i,j)=\exp(\theta^*i)$ if $j=0$ and $h^E$ is $0$ otherwise.
Hence $\nu^*(1,0)=\frac{1+\sqrt{1-4\lambda_1 \mu}}{2}$, $\nu^*(-1,0)=\frac{1-\sqrt{1-4\lambda_1 \mu}}{2}$ and
$\nu^*(0,1)=0$. Therefore
$$\sum x \nu^*(x)=\sqrt{1-4\lambda_1 \mu}\cdot(1,0)$$ so $\lambda^*=\frac{1}{\sqrt{1-4\lambda_1 \mu}}$
and $g(1,0)=\frac{1+\sqrt{1-4\lambda_1 \mu}}{2\lambda_1}$, $g(-1,0)=\frac{1-\sqrt{1-4\lambda_1 \mu}}{2\mu}$
and $g(0,1)=0$.

Moreover, for $\theta_n=(\theta_n(1),\theta_n(2))$ satisfying $\mgf(\theta_n)=1$ we have
$\nabla \mgf(\theta_n)=(\lambda_1e^{\theta_1}-\mu e^{-\theta_1},\lambda_2 e^{\theta_2})$,
 Take the limit as $\theta_n(2)\to -\infty$
along the egg so $\nabla \mgf(\theta_n)$ points north-east.
Hence $e^{\theta_n(1)}\to c=\frac{1+\sqrt{1-4\lambda_1 \mu}}{2\lambda_1}$ and
\begin{eqnarray*}
\nabla \mgf(\theta_n)&\to& (\frac{1+\sqrt{1-4\lambda_1 \mu}}{2}-\mu \frac{2\lambda_1}{1+\sqrt{1-4\lambda_1 \mu}},0)\\
&=&\sqrt{1-4\lambda_1\mu}=\sum x \nu^*(x).
\end{eqnarray*}

If we take $\beta =(-1,0)$  then $exp(\theta^*)=\frac{1-\sqrt{1-4\lambda_1 \mu}}{2\lambda_1}$.
$h^E(i,j)=\exp(\theta^*i)$ if $j=0$ and $h^E$ is $0$ otherwise.
Hence $\nu^*(1,0)=\frac{1-\sqrt{1-4\lambda_1 \mu}}{2}$, $\nu^*(-1,0)=\frac{1+\sqrt{1-4\lambda_1 \mu}}{2}$ and
$\nu^*(0,1)=0$. Therefore
$$\sum x \nu^*(x)=-\sqrt{1-4\lambda_1 \mu}\cdot(1,0)$$ so $\lambda^*=\frac{1}{\sqrt{1-4\lambda_1 \mu}}$
and $g(1,0)=\frac{1+\sqrt{1-4\lambda_1 \mu}}{2\lambda_1}$, $g(-1,0)=\frac{1-\sqrt{1-4\lambda_1 \mu}}{2\mu}$
and $g(0,1)=0$.
 Take the limit as $\theta_n(2)\to -\infty$
along the egg so $\nabla \mgf(\theta_n)$ points north-west.
Hence $e^{\theta_n(1)}\to c=\frac{1-\sqrt{1-4\lambda_1 \mu}}{2\lambda_1}$ and
\begin{eqnarray*}
\nabla \mgf(\theta_n)&\to& (\frac{1-\sqrt{1-4\lambda_1 \mu}}{2}-\mu \frac{2\lambda_1}{1-\sqrt{1-4\lambda_1 \mu}},0)\\
&=&-\sqrt{1-4\lambda_1\mu}=\sum x \nu^*(x).
\end{eqnarray*}
We conclude $m^{u(\theta^*)}$ is continuous on the whole egg and hence uniformly bounded.
The same is true of $|Q^{u(\theta^*)}|$ and $(m^{u(\theta^*)}\cdot \Sigma^{u(\theta^*)} m^{u(\theta^*)})$ by direct computation.
The goal of the next results leading to Theorem \ref{ontheedge} is to show this continuity is a general property.

\begin{lemma}\label{useful}
 If $\alpha\in conv(S^E_{\rho})$  then $\Lambda_E^*(\alpha)=\Lambda^*(\alpha)<\infty$.
\end{lemma}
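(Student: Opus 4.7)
My plan is to prove the equality via a two-sided inequality and handle finiteness separately. The direction $\Lambda_E^*(\alpha)\geq\Lambda^*(\alpha)$ is immediate: since $\rho^E$ is the restriction of $\rho$ to $S_\rho^E$, we have $\rho^E\leq\rho$ pointwise, so $\mgf_E(\theta)\leq\mgf(\theta)$ and $\Lambda_E(\theta)\leq\Lambda(\theta)$ for every $\theta$. Taking the supremum of $\theta\cdot\alpha-\Lambda_E(\theta)$ then dominates the supremum of $\theta\cdot\alpha-\Lambda(\theta)$.

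The reverse inequality is the main step. Since $\mathcal{C}^E$ is a face of the polyhedral cone $\mathcal{C}$, there is a linear functional $\ell$ on $\mathbb{R}^d$ with $\ell\cdot x\geq 0$ on $\mathcal{C}$, strict inequality on $\mathcal{C}\setminus\mathcal{C}^E$, and $\ell\equiv 0$ on $\mathcal{C}^E$. In particular $\ell\cdot\alpha=0$ because $\alpha\in\text{conv}(S_\rho^E)\subseteq\mathcal{C}^E$. For any $\theta$ and $t>0$, set $\theta_t\coloneqq\theta-t\ell$. Then $\theta_t\cdot\alpha=\theta\cdot\alpha$, and splitting $S_\rho=S_\rho^E\sqcup(S_\rho\setminus S_\rho^E)$ gives
\[
\mgf(\theta_t)=\mgf_E(\theta)+\sum_{x\in S_\rho\setminus S_\rho^E}e^{\theta\cdot x}e^{-t\ell\cdot x}\rho(x).
\]
Each summand in the second piece is dominated by $e^{\theta\cdot x}\rho(x)$, summable because $\mgf(\theta)<\infty$, and tends to $0$ as $t\to\infty$ since $\ell\cdot x>0$ there; dominated convergence then gives $\mgf(\theta_t)\to\mgf_E(\theta)$ and hence $\Lambda(\theta_t)\to\Lambda_E(\theta)$. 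Consequently $\Lambda^*(\alpha)\geq\theta\cdot\alpha-\Lambda_E(\theta)$, and supping over $\theta$ yields $\Lambda^*(\alpha)\geq\Lambda_E^*(\alpha)$.

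For finiteness, writing $\alpha=\sum_{i=1}^k p_i x_i$ as a convex combination with $x_i\in S_\rho^E$ and $p_i>0$, and applying the classical Donsker--Varadhan upper bound with $\nu=\sum_i p_i\delta_{x_i}$ gives $\Lambda^*(\alpha)\leq H(\nu\|\rho)=\sum_i p_i\log(p_i/\rho(x_i))$, which is finite since each $\rho(x_i)>0$. The step I expect to be most delicate is the construction of the functional $\ell$: it is automatic when $\mathcal{C}$ is polyhedral (as in the polling application, where $S_\rho$ contains only three points), but in greater generality it would require a separating-hyperplane argument for the face $\mathcal{C}^E$ of $\mathcal{C}$, and one might need to interpret ``face'' as an exposed face for the argument to go through cleanly.
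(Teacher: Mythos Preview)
Your proof is correct and follows essentially the same route as the paper. For the equality, the paper decomposes $\theta=\theta_E+\theta_T$ with $\theta_T$ orthogonal to the span of $\mathcal{C}^E$ and sends $\theta_T$ ``out of $\mathcal{C}$'' so that $\theta_T\cdot x<0$ on $S_\rho\setminus S_\rho^E$; this is exactly your face-supporting functional $-\ell$, and the limiting argument $\mgf(\theta_t)\to\mgf_E(\theta)$ is identical. Your caveat about needing $\mathcal{C}^E$ to be an \emph{exposed} face is well placed; the paper assumes this implicitly (it calls $\mathcal{C}^E$ a ``supporting'' cone), and in the polyhedral setting of the polling model it is automatic.

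The only genuine difference is in the finiteness step: the paper invokes Barndorff--Nielsen's Theorem~9.4 to get $conv(S_\rho)\subseteq dom(\Lambda^*)$, whereas you use the Donsker--Varadhan variational representation $\Lambda^*(\alpha)=\inf\{H(\nu\|\rho):\int x\,\nu(dx)=\alpha\}$ and bound by the relative entropy of a finitely supported $\nu$. Both arguments are standard and yield the same conclusion; yours is perhaps more self-contained since it avoids an external reference.
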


\begin{proof}
 Since $S_{\rho}$ is countable by Theorem 9.4 in \cite{Barndorff} we have
 $conv (S_{\rho})\subseteq dom(\Lambda^*)$
(with equality when $S_{\rho}$ is finite) so $\Lambda^*(\lambda\beta)<\infty$.

Decompose any $\theta\in R^n$ into $\theta=\theta_E+\theta_T$ where $\theta_T$ is orthogonal to $E$ and $\theta_E$ is in $E$.
\begin{eqnarray}
\Lambda^*(\alpha)&=&\sup_{\theta}\left(\theta\cdot\alpha-\Lambda(\theta)\right)\label{agoodstep}\\
&=&\sup_{\theta_E,\theta_T}\left(\theta_E\cdot\alpha-\log(\sum_{x\in E}e^{\theta_E\cdot x}\rho(x)+\sum_{x\in E^c}e^{\theta\cdot x}\rho(x))\right)\nonumber.
\end{eqnarray}
Now, for a fixed $\theta_E$,
$$\sum_{x\in E^c}e^{\theta\cdot x}\rho(x)=\sum_{x\in E^c}e^{\theta_E\cdot x+\theta_T\cdot x}\rho(x)$$
can be arbitrarily close to $0$ by choosing $\theta_T$ pointing out of $\mathcal{C}$ so $\theta_T\cdot x<0$ for $x\in \mathcal{C}$
and as long as we like.  Hence
\begin{eqnarray*}
\lefteqn{\sup_{\theta_T}\left(\theta_E\cdot\alpha-\log(\sum_{x\in E}e^{\theta_E\cdot x}\rho(x)+\sum_{x\in E^c}e^{\theta\cdot x}\rho(x))\right)}\\
&=&\left(\theta_E\cdot\alpha-\log(\sum_{x\in E}e^{\theta_E\cdot x}\rho(x))\right)
\end{eqnarray*}
and the supremum of the above over $\theta_E$  is precisely $\Lambda_E^*(\alpha)$.
The result follows from (\ref{agoodstep}).

\end{proof}

Finally we address the continuity of the map $\beta\to  \overline{K}^{\theta^*}$ at
 the boundary of $U^d\cap \mathcal{C}$. Pick a sequence $\beta_n$ in the interior of $U^d\cap \mathcal{C}$ such that $\beta_n\to\beta\in \mathcal{C}^E$
 on the boundary of $U^d\cap \mathcal{C}$. We suppose $\rho(\mathcal{C}^E)>0$ throughout.

 \begin{lemma}\label{helpful}
 If $\lambda\beta\in conv(S_{\rho})\cap E$ then for $\beta_n\to\beta$,
 $$\lim_{n\to\infty}\Lambda^*(\lambda\beta_n)=\Lambda^*(\lambda\beta)=\Lambda_E^*(\lambda\beta).$$
 \end{lemma}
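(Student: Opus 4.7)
The plan is to split the lemma into two claims: the identity $\Lambda^*(\lambda\beta) = \Lambda^*_E(\lambda\beta)$, and the continuity $\Lambda^*(\lambda\beta_n) \to \Lambda^*(\lambda\beta)$ as $n\to\infty$. The identity is essentially immediate from Lemma \ref{useful}: because $\mathcal{C}^E$ is assumed to be a face of $\mathcal{C}$, the intersection $\operatorname{conv}(S_\rho) \cap E$ coincides with $\operatorname{conv}(S^E_\rho)$, so $\lambda\beta$ lies in $\operatorname{conv}(S^E_\rho)$ and Lemma \ref{useful} yields $\Lambda^*(\lambda\beta) = \Lambda^*_E(\lambda\beta)$.

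For continuity I would handle lower and upper semi-continuity separately. Lower semi-continuity is routine: for each fixed $\theta$, the affine map $\alpha \mapsto \theta\cdot\alpha - \Lambda(\theta)$ is continuous and finite (since $\Lambda$ is finite everywhere by hypothesis), so $\liminf_n \Lambda^*(\lambda\beta_n) \geq \theta\cdot\lambda\beta - \Lambda(\theta)$ for each $\theta$, and taking the supremum yields $\liminf_n \Lambda^*(\lambda\beta_n) \geq \Lambda^*(\lambda\beta)$.

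Upper semi-continuity is the substantive step, to be handled by a convexity argument. Recall that $\operatorname{conv}(S_\rho) \subseteq \operatorname{dom}(\Lambda^*)$ and that a proper convex function is continuous on the relative interior of its effective domain, so $\Lambda^*$ is bounded on any compact subset of $\operatorname{ri}(\operatorname{conv}(S_\rho))$. Fix a reference point $y_0 \in \operatorname{ri}(\operatorname{conv}(S_\rho))$, and choose a sequence $t_n \downarrow 0$ slowly enough that $y_n \coloneqq \lambda\beta + t_n^{-1}(\lambda\beta_n - \lambda\beta)$ stays in a bounded neighborhood of $y_0$ inside $\operatorname{ri}(\operatorname{conv}(S_\rho))$. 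Then $\lambda\beta_n = (1-t_n)\lambda\beta + t_n y_n$, and convexity of $\Lambda^*$ gives
\begin{equation*}
\Lambda^*(\lambda\beta_n) \leq (1-t_n)\Lambda^*(\lambda\beta) + t_n \Lambda^*(y_n),
\end{equation*}
with $t_n\Lambda^*(y_n)\to 0$ because $\Lambda^*(y_n)$ stays bounded.

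The main obstacle is guaranteeing that $y_n$ can actually be kept in $\operatorname{ri}(\operatorname{conv}(S_\rho))$. When $\beta_n$ approaches $\beta$ with a component transverse to $E$, the perturbation $\lambda\beta_n - \lambda\beta$ has a piece pointing into the interior of $\mathcal{C}$, and a suitable $t_n$ achieves the requirement. The delicate situation is when $\beta_n$ approaches $\beta$ tangentially inside $\mathcal{C}^E$; then the same convexity argument must be rerun in the lower-dimensional setting, replacing $\Lambda^*$ by $\Lambda_E^*$ and drawing the reference point from $\operatorname{ri}(\operatorname{conv}(S^E_\rho))$, the identity $\Lambda^*(\lambda\beta) = \Lambda^*_E(\lambda\beta)$ established above serving to match the two limits. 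Splitting the given sequence into its transverse and tangential subsequences and treating each piece by the corresponding argument completes the proof.
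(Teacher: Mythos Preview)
The paper disposes of this lemma in a single sentence: it notes that $\Lambda^*$ is convex and differentiable on the interior of $\operatorname{conv}(S_\rho)$, asserts $\Lambda^*(\lambda\beta_n)\to\Lambda^*(\lambda\beta)<\infty$, and cites Lemma~\ref{useful} for the second equality. You take the more explicit route of separating lower and upper semicontinuity; the identity via Lemma~\ref{useful} and your lower-semicontinuity argument are both correct and standard.

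Your upper-semicontinuity step, however, has a gap. The bound $\Lambda^*(\lambda\beta_n)\le (1-t_n)\Lambda^*(\lambda\beta)+t_n\Lambda^*(y_n)$ is useful only if $t_n\to 0$ while $\Lambda^*(y_n)$ stays bounded, i.e.\ while $y_n$ remains in a fixed compact subset of $\operatorname{ri}(\operatorname{conv}(S_\rho))$. But $y_n=\lambda\beta+t_n^{-1}(\lambda\beta_n-\lambda\beta)$ lies on the ray from $\lambda\beta$ through $\lambda\beta_n$, whose \emph{direction} is dictated by the sequence, not by you. If the normalized displacement $(\beta_n-\beta)/|\beta_n-\beta|$ drifts toward a direction tangent to $E$---as happens when the component of $\beta_n-\beta$ transverse to $E$ decays faster than the tangential component, even though every $\beta_n$ lies strictly inside $\mathcal C$---then keeping $y_n$ at a fixed positive distance from the face forces $|y_n|\to\infty$, and $y_n$ leaves $\operatorname{conv}(S_\rho)$. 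Your proposed split into ``transverse'' and ``tangential'' subsequences does not address this mixed regime: each $\beta_n-\beta$ carries both components simultaneously, and no subsequence is purely one or the other. In the paper's actual application $S_\rho$ is finite, so $\operatorname{conv}(S_\rho)$ is a polytope and a closed proper convex function is automatically continuous relative to a polyhedral domain; invoking that fact would close the gap cleanly.
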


 \begin{proof}
$\Lambda^*$ is  convex and differentiable on the interior of $conv(S_{\rho})$ so $\Lambda^*(\lambda\beta_n)\to\Lambda^*(\lambda\beta)<\infty$
 using Lemma \ref{useful}.
 \end{proof}

$g_n(\lambda)=\Lambda^*(\lambda\beta_n)/\lambda$  attains a minimum at $\lambda^*_n$ by Theorem \ref{goanyway}.
 $g_n(\lambda)\to g(\lambda)=\Lambda_E^*(\lambda\beta)/\lambda$ by Lemma \ref{helpful}. Let $\lambda^*_E$ denote the $\lambda$ minimizing $g$.
 \begin{proposition}\label{uniformlybounded}
 $\lim_{n\to\infty}\lambda^*_n=\lambda^*_E$.
 \end{proposition}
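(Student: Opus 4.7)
The plan is to combine a subsequence argument with the lower semi-continuity of $\Lambda^*$ and the uniqueness of $\lambda^*_E$ as the minimizer of $g$. First I would show that $(\lambda^*_n)$ is contained in a compact subinterval of $(0,\infty)$. For a lower bound, if some subsequence $\lambda^*_{n_k}\to 0$, then lower semi-continuity of $\Lambda^*$ combined with $\Lambda^*(0)>0$ (which holds since, as in the proof of Theorem~\ref{goanyway}, some $\theta$ satisfies $\Lambda(\theta)<0$) forces $g_{n_k}(\lambda^*_{n_k})=\Lambda^*(\lambda^*_{n_k}\beta_{n_k})/\lambda^*_{n_k}\to\infty$; this contradicts the minimality estimate $g_{n_k}(\lambda^*_{n_k})\le g_{n_k}(\lambda^*_E)\to g(\lambda^*_E)<\infty$, where the convergence uses Lemma~\ref{helpful} applied at $\lambda=\lambda^*_E$ (valid since Corollary~\ref{dotheedge} places $\lambda^*_E\beta\in\mathrm{conv}(S^E_\rho)\subseteq\mathrm{conv}(S_\rho)\cap E$). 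For an upper bound, Theorem~\ref{goanyway} identifies $\lambda^*_n\beta_n=\nabla\Lambda(\theta^*_n)\in\mathrm{conv}(S_\rho)$, and since $S_\rho$ is finite in the polling setting, $\mathrm{conv}(S_\rho)$ is bounded, forcing $\lambda^*_n\le\mathrm{diam}(\mathrm{conv}(S_\rho))/|\beta_n|$ to be bounded.

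Next, from any convergent subsequence $\lambda^*_{n_k}\to\lambda^{**}\in(0,\infty)$ I would extract $g(\lambda^{**})\le g(\lambda^*_E)$. Indeed, lower semi-continuity of $\Lambda^*$ along $\lambda^*_{n_k}\beta_{n_k}\to\lambda^{**}\beta$, together with Lemma~\ref{useful}, which gives $\Lambda^*(\lambda^{**}\beta)=\Lambda^*_E(\lambda^{**}\beta)$ ($\lambda^{**}\beta\in\mathrm{conv}(S^E_\rho)$ as a limit of points in $\mathrm{conv}(S_\rho)\cap E$ together with $E$ being a face of $\mathcal{C}$), yields $\liminf_k g_{n_k}(\lambda^*_{n_k})\ge g(\lambda^{**})$. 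Combined with the minimality bound $g_{n_k}(\lambda^*_{n_k})\le g_{n_k}(\lambda^*_E)\to g(\lambda^*_E)$, this gives $g(\lambda^{**})\le g(\lambda^*_E)$. Because Corollary~\ref{dotheedge} applied to the restricted walk $\rho^E$ identifies $\lambda^*_E$ as the unique minimizer of $g$ on $(0,\infty)$, we conclude $\lambda^{**}=\lambda^*_E$; every convergent subsequence of the bounded sequence $(\lambda^*_n)$ sharing this limit forces $\lambda^*_n\to\lambda^*_E$.

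The main obstacle is the upper bound on $\lambda^*_n$ when $S_\rho$ is infinite: $\mathrm{conv}(S_\rho)$ may then be unbounded along $\beta$, and the direct geometric bound fails. In that case one must instead invoke the steepness of $\Lambda^*$ at the relative boundary of its domain (as in the analysis of $\bar\lambda$ in the proof of Theorem~\ref{goanyway}) to obtain a uniform-in-$n$ blow-up of $g_n(\lambda)$ as $\lambda$ approaches the limiting value of $\bar\lambda_n$. The finite-support case, which covers the polling application, avoids this complication entirely.
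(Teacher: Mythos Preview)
Your argument is essentially correct and reaches the same conclusion, but the route is genuinely different from the paper's.

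The paper gives a direct three-point trapping argument. It uses Lemma~\ref{helpful} only to get pointwise convergence $g_n(\lambda)\to g(\lambda)$ at the three values $\lambda^*_E-\delta$, $\lambda^*_E$, $\lambda^*_E+\delta$. Since $\lambda^*_E$ is the unique minimizer of $g$, for small $\delta$ one has $g(\lambda^*_E\pm\delta)>g(\lambda^*_E)$ by some margin $\epsilon$; pointwise convergence then forces $g_n(\lambda^*_E\pm\delta)>g_n(\lambda^*_E)$ for large $n$. Because each $g_n(\lambda)=\sup_\theta\bigl(\theta\cdot\beta_n-\Lambda(\theta)/\lambda\bigr)$ is a supremum of monotone functions of $\lambda$ and hence quasi-convex (the paper says ``by convexity''), this traps the minimizer $\lambda^*_n$ in $[\lambda^*_E-\delta,\lambda^*_E+\delta]$. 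No compactness, no lower semi-continuity, no subsequences are needed.

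Your approach is the standard variational one: bound $(\lambda^*_n)$ in a compact interval, pass to a subsequential limit $\lambda^{**}$, use lower semi-continuity of $\Lambda^*$ together with the minimality bound $g_{n_k}(\lambda^*_{n_k})\le g_{n_k}(\lambda^*_E)\to g(\lambda^*_E)$ to get $g(\lambda^{**})\le g(\lambda^*_E)$, and invoke uniqueness. This is sound, but note one slip in your justification of $\lambda^{**}\beta\in\mathrm{conv}(S^E_\rho)$: the approximating points $\lambda^*_{n_k}\beta_{n_k}$ lie in $\mathrm{conv}(S_\rho)$ but \emph{not} in $E$ (they are interior to $\mathcal{C}$). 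What you actually have is $\lambda^{**}\beta\in\overline{\mathrm{conv}(S_\rho)}\cap\mathcal{C}^E$, and in the finite-support case this equals $\mathrm{conv}(S_\rho)\cap\mathcal{C}^E=\mathrm{conv}(S^E_\rho)$ because $\mathcal{C}^E$ is a face. Alternatively, the proof of Lemma~\ref{useful} only really uses $\alpha\in E$ and $\Lambda^*(\alpha)<\infty$, both of which you have directly from lower semi-continuity.

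The paper's proof is shorter and sidesteps both the boundedness issue you flag for infinite support and the need to locate $\lambda^{**}\beta$ in $\mathrm{conv}(S^E_\rho)$; your approach buys robustness in that it does not rely on the quasi-convexity of $g_n$.
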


 \begin{proof}
 For any $\delta$ define
 $\epsilon=\max\{g(\lambda^*_E-\delta)-g(\lambda^*_E),g(\lambda^*_E+\delta)-g(\lambda^*_E)$.
 Now find $N$ such that for $n\geq N$,
 $$g_n(\lambda^*_E)-g(\lambda^*_E)|<\epsilon/4,g_n(\lambda^*_E-\delta)-g(\lambda^*_E-\delta)|<\epsilon/4\mbox{ and }
 |g_n(\lambda^*_E+\delta)-g(\lambda^*_E+\delta)|<\epsilon/4;$$
 i.e. $g(\lambda^*_E-\delta)>g_n(\lambda^*_E)$ and $g(\lambda^*_E+\delta)>g_n(\lambda^*_E)$ for $n\geq N$.
 Then by convexity $\lambda_n^*\in [\lambda^*_E-\delta,\lambda^*_E+\delta]$ for $n\geq N$.
 But $\delta$ was arbitrarily small so $\lim_{n\to\infty}\lambda_n^*=\lambda_E^*$.

\end{proof}

The sequence  $\beta_n$ in the interior of $\mathcal{C}$ converges to $\beta\in \mathcal{C}^E$. For each $\beta_n$
there are associated $\theta^*_n$ and $\lambda^*_n$ and $\nu^*_n$ such that $ \sum_x  \nu_n^*(x)=1$ and
 $\lambda^*_n\beta_n= \sum_x x \nu_n^*(x)$
where $\nu^*_n(x)=e^{x\cdot\theta_n^*}\rho(x)$ and $\mgf(\theta_n^*)=1$.
Let $\theta_n^*$  denote
 the parameter to twist $\overline{K}$ into $\overline{K}^{\theta^*_n}$ with mean  $\lambda_n^*\beta_n$ while $\theta^*_E$  denotes
 the parameter to twist $\overline{K}$ into  $\overline{K}^{\theta_E^*}$ with mean  $\lambda^*_E\beta$ as defined in Corollary \ref{dotheedge}.
 Note that $\lambda_n^*\beta_n$ is in the interior of $dom(\Lambda^*)$ while $\lambda^*_E\beta$ is in the interior of $dom(\Lambda^*_E)$.

 Let $n_E$ denote the unit vector orthogonal to $\mathcal{C}^E$ at $\lambda^*\beta$ pointed into the interior of the cone $\mathcal{C}$. Then $n_E\cdot\beta=0$.
 $$n_E \cdot (\lambda_n^*\beta_n)=\sum_x  n_E \cdot x e^{x\cdot\theta_n^*}\rho(x)=\sum_{x\in E^c}   n_E \cdot x e^{x\cdot\theta_n^*}\rho(x).$$
 Now $n_E \cdot (\lambda_n^*\beta_n)\to 0$ since $\lambda_n^*$ is uniformly bounded by Proposition \ref{uniformlybounded} and $\beta_n\to\beta$ which is in $E$.
 Next all the terms $ n_E\cdot x$ are nonnegative by convexity so
 $\sum_{x\in E^c}   n_E \cdot x e^{x\cdot\theta_n^*}\rho(x)\to 0$ which means
 $e^{\theta^*_n\cdot x}\to 0$ for $x\in S_{\rho}\setminus S_{\rho}^E$.
Hence $g(x)=0$ if $x\in S_{\rho}\setminus S_{\rho}^E$.
Moreover, by Fatou's Lemma,
$$1\geq \sum_x \liminf_{n_k\to\infty}e^{x\cdot\theta_{n_k}^*}\rho(x)=\sum_x g(x)\rho(x).$$

Since $\sum_x e^{x\cdot\theta_n^*}\rho(x)=1$ it follows that $0\leq e^{x\cdot\theta_n^*}\leq 1/\rho(x)$ uniformly in $\theta_n$.
 Hence we can pick a subsequence indexed by $n_k$ that converges; i.e. $e^{x\cdot\theta_n^*}\to g(x)$ for all $x$; alternatively
 $\nu^*_{n_k}$ converges weakly to
a  measure $\nu^*$.   $\zeta_n(t)=\sum_x e^{t\cdot x}\nu^*_n(x)$ is the moment generating function of $\nu^*_n$
and  $\zeta(t)=\sum_x e^{t\cdot x}\nu^*(x)$ is the moment generating function of $\nu^*$.

 By the separating hyperplane theorem (see Lemma VI.5.4 in \cite{Ellis}) there exists  a unit vector $a$ and a hyperplane $\{x:x\cdot a=0\}$
 supporting the  cone $\mathcal{C}$
 such that $ a\cdot x\leq 0$ for all $x\in \mathcal{C}$.
Since $a\cdot x\leq 0$  it follows that $\zeta_{n_k}(a)<\infty$ and $\zeta_{n_k}(a)\to \zeta(a)$.

\begin{proposition}
$\nu^*$ is a probability.
\end{proposition}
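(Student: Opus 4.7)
The plan is to strengthen the pointwise convergence $\nu_{n_k}^* \to \nu^*$ established in the excerpt (with $\sum \nu^* \leq 1$ by Fatou) to convergence of total masses. Since $\sum_x \nu_{n_k}^*(x) = 1$ for every $k$, it suffices to produce a summable dominator so that the dominated convergence theorem applies.

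To obtain the dominator I would decompose $\theta_{n_k}^*$ along the separating hyperplane already in hand. Take $a$ to be the unit vector with $a \cdot x \leq 0$ on $\mathcal{C}$, with $a \cdot x = 0$ precisely on $\mathcal{C}^E$ and $a \cdot x < 0$ on $\mathcal{C} \setminus \mathcal{C}^E$, and write $\theta_{n_k}^* = \tau_k + s_k a$ with $\tau_k \perp a$. The fact from the excerpt that $e^{\theta_{n_k}^* \cdot x} \to 0$ for every $x \in S_\rho \setminus S_\rho^E$ forces $s_k \to +\infty$, so $s_k \geq 0$ eventually. For $x \in S_\rho^E$ we have $a \cdot x = 0$ and so $e^{\tau_k \cdot x} = e^{\theta_{n_k}^* \cdot x}$ converges to a positive limit $g(x)$. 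The standing hypothesis that $\mathcal{C}^E \subset \mathrm{aff}(S_\rho^E)$ guarantees that $S_\rho^E$ spans the subspace parallel to $\mathcal{C}^E$, so boundedness of $\{\tau_k \cdot x : x \in S_\rho^E\}$ yields boundedness of the component of $\tau_k$ in that subspace; the component orthogonal to both $a$ and $\mathrm{span}(\mathcal{C}^E)$ contributes nothing to $\theta_{n_k}^* \cdot x$ for $x \in \mathrm{span}(S_\rho)$ and can be taken zero without loss. After extracting a further subsequence, $|\tau_k| \leq M < \infty$.

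The dominator is then immediate: since $s_k \geq 0$ and $a \cdot x \leq 0$ for every $x \in \mathcal{C}$,
\[
\nu_{n_k}^*(x) \;=\; e^{\tau_k \cdot x}\, e^{s_k (a \cdot x)}\, \rho(x) \;\leq\; e^{M|x|}\, \rho(x),
\]
and $\sum_x e^{M|x|} \rho(x) \leq \sum_{j=1}^d \bigl( \mgf(M e_j) + \mgf(-M e_j) \bigr) < \infty$ by the assumption that $\mgf$ is finite everywhere. Dominated convergence then gives $\sum_x \nu^*(x) = \lim_k \sum_x \nu_{n_k}^*(x) = 1$.

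The main obstacle is the boundedness of $\tau_k$. The separating hyperplane decomposition is natural, and the dominator and DCT step are routine once $|\tau_k|$ is controlled; the subtle point is combining the standing hypothesis $\mathcal{C}^E \subset \mathrm{aff}(S_\rho^E)$ with the already-established pointwise convergence of $e^{\theta_{n_k}^* \cdot x}$ on $S_\rho^E$ in order to extract a uniformly bounded subsequence of $\tau_k$, and cleanly handling any component of $\tau_k$ orthogonal to both $a$ and the span of $\mathcal{C}^E$.
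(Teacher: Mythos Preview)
Your route is genuinely different from the paper's, and the step you flag as subtle is where the argument actually breaks.

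The paper's proof is a four-line Jensen argument with no decomposition of $\theta_{n_k}^*$ at all. For $t=\epsilon a$ one has $e^{t\cdot x}\le 1$ on $\mathcal C$, hence $\sum_x\nu^*(x)\ge\zeta(t)$. Jensen applied to the probability $\nu_n^*$ gives $\zeta_n(t)=E_{\nu_n^*}[e^{t\cdot X}]\ge\exp\bigl(t\cdot E_{\nu_n^*}[X]\bigr)=\exp(\lambda_n^*\,\beta_n\cdot t)$; passing to the limit (the convergence $\zeta_{n_k}(t)\to\zeta(t)$ is recorded just before the proposition) yields $\zeta(t)\ge e^{\lambda_E^*\,\beta\cdot t}$, and sending $\epsilon\to 0$ gives $\sum_x\nu^*(x)\ge 1$.

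Your dominator argument has two concrete holes at the point you identify. First, the claim that the component of $\tau_k$ orthogonal to both $a$ and $\mathrm{span}(\mathcal C^E)$ ``contributes nothing to $\theta_{n_k}^*\cdot x$ for $x\in\mathrm{span}(S_\rho)$'' is false whenever $S_\rho$ spans $\mathbb R^d$ and the face $\mathcal C^E$ has codimension $m>1$: the subspace $W=a^\perp\cap\mathrm{span}(\mathcal C^E)^\perp$ is then $(m-1)$-dimensional, and any nonzero $v\in W$ pairs nontrivially with some $x\in S_\rho\setminus S_\rho^E$, so the $W$-component of $\tau_k$ is determined by the constraints on $\theta_{n_k}^*$ and cannot be set to zero; you have no bound on it. Second, even when $W=\{0\}$, bounding $\tau_k$ from the convergence $e^{\tau_k\cdot x}\to g(x)$ on $S_\rho^E$ requires $g(x)>0$ there, for otherwise $\tau_k\cdot x\to-\infty$ and $|\tau_k|$ is unbounded. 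At this stage only $g\ge 0$ is known; positivity on $S_\rho^E$ is established only later in the paper, \emph{after} the present proposition has been used.
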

\begin{proof}
Take $t=\epsilon a$ so $e^{t\cdot x}\leq 1$ for all $x\in \mathcal{C}$.
By the convexity of $ e^{t\cdot x}$
 as a function of $x$ using Jensen's inequality we have
 $$\zeta_n(t)\geq \exp(\sum_x t\cdot x\nu^*_n(x))=\exp(\lambda_n^*\beta_n\cdot t).$$
Hence $$\sum_x e^{t\cdot x}\nu^*(x)=\zeta(t)\geq e^{\lambda^*_E\beta\cdot t}.$$ Taking $\epsilon\to 0$ implies
$\nu^*$ is a probability.
\end{proof}

 \begin{proposition} \label{meansconverge}
We assume either the support of $\rho$ is finite or
there is a neighbourhood $\{b:|b-a|\leq \delta\}$ such that $b\cdot x<0$ for all $x\in \mathcal{C}$.
It follows that $\lambda^*\beta= \sum_x x \nu^*(x).$
 \end{proposition}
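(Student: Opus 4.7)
The plan is to dispatch the two hypotheses separately, the first immediately and the second via a uniform-integrability argument. Under the finite-support hypothesis, the sum $\sum_x x\nu_n^*(x)$ has only finitely many nonzero terms and each satisfies $\nu_n^*(x) = e^{\theta_n^*\cdot x}\rho(x) \to g(x)\rho(x) = \nu^*(x)$ pointwise (established in the paragraphs preceding the proposition). Hence $\sum_x x\nu_n^*(x)\to \sum_x x\nu^*(x)$ by finite-sum linearity, and combining this with $\sum_x x\nu_n^*(x) = \lambda_n^*\beta_n \to \lambda^*\beta$ via Proposition~\ref{uniformlybounded} and $\beta_n\to\beta$ gives the asserted identity.

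Under the neighborhood hypothesis, my plan is to combine Scheff\'e's lemma with uniform integrability of $|x|$ with respect to $\{\nu_n^*\}$. Compactness of $\mathcal{C}\cap\{|x|=1\}$ together with the strict inequality yields a constant $c>0$ with $a\cdot x \leq -c|x|$ for all $x\in\mathcal{C}$, which immediately delivers the uniform first-moment bound
\[
\sum_x |x|\nu_n^*(x) \;\leq\; c^{-1}\sum_x(-a\cdot x)\nu_n^*(x) \;=\; -c^{-1}a\cdot\lambda_n^*\beta_n \;\leq\; C.
\]
For the tail bound, I would use that the MGF $\zeta_n(t):= \sum_x e^{t\cdot x}\nu_n^*(x) = \mgf(\theta_n^* + t)$ satisfies $|\zeta_n(z)|\leq 1$ on the complex tube $\{z:|\Re(z)-a|<\delta\}$, so by Cauchy's integral formula all derivatives $\partial^\alpha\zeta_n(a)$ are uniformly bounded in $n$; equivalently, the $a$-tilted moments $\sum_x x^\alpha e^{a\cdot x}\nu_n^*(x)$ are uniformly controlled. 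Passing to the auxiliary tilted probability $\mu_n(x):= e^{b^*\cdot x}\nu_n^*(x)/\zeta_n(b^*)$ for a suitable $b^*\in B(a,\delta/2)$ shifts the MGF-bound ball into a genuine neighborhood of $0$, so $\mu_n$ has uniformly bounded exponential moments; unwinding the tilt using $|x|\leq(-b^*\cdot x)/c'$ then yields $\sup_n\sum_{|x|>R}|x|\nu_n^*(x)\to 0$ as $R\to\infty$.

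Once uniform integrability is in hand, Scheff\'e's lemma (which promotes pointwise convergence of probability mass functions on a countable set to total-variation convergence $\nu_n^*\to\nu^*$) combined with the UI estimate yields $\sum_x x\nu_n^*(x)\to\sum_x x\nu^*(x)$, and the identity $\sum_x x\nu^*(x)=\lambda^*\beta$ follows from $\lambda_n^*\beta_n\to\lambda^*\beta$. The main obstacle is precisely the uniform-integrability step: since $a$ has unit norm, the hypothesis forces $\delta\leq 1$, so $B(a,\delta)$ never contains the origin, and the naive Cauchy estimate at $a$ only controls $a$-tilted moments; the careful choice of the auxiliary tilt $b^*$ inside $B(a,\delta/2)$, together with the strict inequality $b\cdot x<0$ built into the hypothesis, is exactly what permits the translation from tilted to untilted tail estimates.
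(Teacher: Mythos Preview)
Your finite-support argument is correct and matches the paper's.

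In the infinite-support case your uniform-integrability route has a genuine gap at the ``unwinding the tilt'' step. After tilting by $b^*\in B(a,\delta/2)$ you correctly obtain that the MGF of $\mu_n$ is uniformly bounded on $B(0,\delta/2)$, and hence all moments of $\mu_n$ are uniformly bounded. But to recover tail estimates for $\nu_n^*$ you must write $\nu_n^*(x)=\zeta_n(b^*)\,e^{-b^*\cdot x}\mu_n(x)$, and since $b^*\cdot x<0$ on the support, the factor $e^{-b^*\cdot x}$ grows like $e^{c|x|}$ with $c$ comparable to $|b^*|\geq |a|-\delta/2=1-\delta/2$. Your exponential-moment control on $\mu_n$ only reaches radius $\delta/2<1/2<|b^*|$, so there is no way to dominate $|x|e^{-b^*\cdot x}$ by a $\mu_n$-integrable function uniformly in $n$; concretely, $E_{\mu_n}[e^{(1+\eta)(-b^*\cdot X)}]$ equals the MGF of $\mu_n$ at $-(1+\eta)b^*+b^*=-\eta b^*$, and $-\eta b^*$ lies \emph{outside} the polar cone (since $(-\eta b^*)\cdot x=\eta(-b^*\cdot x)>0$), so you have no uniform bound there. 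The inequality $|x|\leq(-b^*\cdot x)/c'$ only trades $|x|$ for another quantity that still appears inside an exponential of order $1$, so it does not rescue the argument.

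The paper does not attempt to establish uniform integrability. Instead it fixes $t=\epsilon a$ with $\epsilon>0$ small, so that $\zeta_n$ is uniformly $\leq 1$ on the scaled ball $B(\epsilon a,\epsilon\delta)$; this yields a \emph{uniform} (in $n$) bound $2|s|^{1/2}$ on the error between $\partial_j\zeta_{n_k}(t)$ and its finite-difference approximation, via the elementary inequality $|(e^{sx_j}-1)/s-x_j|\leq|s|^{1/2}(e^{|s|^{1/4}x_j}+e^{-|s|^{1/4}x_j})$. Combined with pointwise convergence $\zeta_{n_k}\to\zeta$ this gives $\partial_j\zeta_{n_k}(\epsilon a)\to\partial_j\zeta(\epsilon a)$. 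Finally one sends $\epsilon\downarrow 0$: since $t\cdot x<0$, the integrands $x_j^{\pm}e^{t\cdot x}$ increase monotonically to $x_j^{\pm}$, and monotone convergence handles both sides of the double limit. This route sidesteps the need for any second-moment or exponential-moment bound on $\nu_n^*$ itself.
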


 \begin{proof}
 Recall $\sum_x x \nu_{n_k}^*(x)=\lambda_{n_k}^*\beta_{n_k}$ and
$$\lambda^*\beta=\lim_{n_k\to\infty}\lambda_{n_k}^*\beta_{n_k}=\lim_{n_k\to\infty}\sum_x x \nu_{n_k}^*(x).$$
But $\nu_{n_k}^*$ converges to $\nu^*$ so, if the support of $\rho$ is finite, $\lim_{n_k\to\infty}\sum_x x \nu_{n_k}^*(x)=\sum_x x \nu^*(x)$.
The  result follows.

Define the neighbourhood $N=\{v:v=\epsilon b, |b-a|\leq \delta\}$ and define $t=\epsilon a$.
We have $\zeta_{n_k}(v)\leq 1$  for each $n_k$ and $\zeta(v)\leq 1$ for $v\in N$. Moreover uniformly on $N$,
$\zeta_{n_k}(v)\to \zeta(v)$ for $v\in N$.
 Let $u_j$ be the unit basis vector in coordinate $j$ and
let $x_j$ be the $j^{th}$ component of $x$.
With $0<|s|\leq 1$,
  \begin{eqnarray*}
 |\frac{(e^{(sx_j)}-1)}{s}-x_j |&= & |\sum_{m=2}^{\infty}\frac{s^{m-1}x_j^m}{m!}|\\
 &\leq& |s|^{1/2}\sum_{m=2}^{\infty}\frac{|s|^{m-3/2}|x_j|^m}{m!}\\
 &\leq& |s|^{1/2}\exp(|s|^{1/4}|x_j|)\mbox{ since } m-3/2\geq m/4\mbox{ for } m\geq 2\\
 &\leq& |s|^{1/2}\left(\exp(|s|^{1/4} x_j)+\exp(-|s|^{1/4} x_j)\right).
  \end{eqnarray*}

Consequently,
 \begin{eqnarray*}
 & &|\frac{1}{s}\left(\zeta_{n_k}(t+s u_j)-\zeta_{n_k}(t)\right)-\frac{\partial\zeta_{n_k}(t)}{\partial t_j}|\\
 &=&|\sum_x \frac{(e^{(sx_j)}-1)}{s}e^{t\cdot x}\nu^*_n(x)-\sum_x x_j e^{t\cdot x}\nu^*_n(x)|\\
 &\leq &|s|^{1/2}\left(\sum_x\exp(|s|^{1/4} x_j)e^{t\cdot x}\nu^*_n(x)+\sum_x\exp(-|s|^{1/4} x_j)e^{t\cdot x}\nu^*_n(x)\right)\\
 &=&|s|^{1/2}\left(\zeta_n(t+|s|^{1/4} u_j)+\zeta_n(t-|s|^{1/4} u_j)\right)\\
 &\leq& 2|s|^{1/2}
  \end{eqnarray*}
  for $|s|^{1/4}<\epsilon$ and this inequality is uniform in $n_k$.
 Similarly,
 \begin{eqnarray*}
 |\frac{1}{s}\left(\zeta(t+s u_j)-\zeta(t)\right)-\frac{\partial\zeta(t)}{\partial t_j}|
 &\leq& 2|s|^{1/2}
  \end{eqnarray*}
   for $|s|^{1/4}<\epsilon$.

  There exists an $M$ such that for $n_k\geq M$
  $$|\left(\zeta_{n_k}(t+s u_j)-\zeta_{n_k}(t)\right)- \left(\zeta(t+s u_j)-\zeta(t)\right)|\leq \delta$$
 for  $n_k\geq M$. Consequently, for $n_k\geq M$,
 \begin{eqnarray*}
 \lefteqn{|\frac{\partial\zeta_{n_k}(t)}{\partial t_j}-\frac{\partial\zeta(t)}{\partial t_j}|
 \leq |\frac{1}{s}\left(\zeta_{n_k}(t+s u_j)-\zeta_{n_k}(t)\right)-\frac{\partial\zeta_{n_k}(t)}{\partial t_j}|}\\
 & &+|\frac{1}{s}\left(\zeta_{n_k}(t+s u_j)-\zeta_{n_k}(t)\right)- \frac{1}{s}\left(\zeta(t+s u_j)-\zeta(t)\right)|
 +|\frac{1}{s}\left(\zeta(t+s u_j)-\zeta(t)\right)-\frac{\partial\zeta(t)}{\partial t_j}|\\
 &\leq&4|s|^{1/2}+\frac{\delta}{s}.
 \end{eqnarray*}
 But $\delta$ is arbitrarily small as $n_k$ tends to infinity so
 $ \lim_{n_k\to\infty}|\frac{\partial\zeta_{n_k}(t)}{\partial t_j}-\frac{\partial\zeta(t)}{\partial t_j}|\leq 4|s|^{1/2}$.
 Moreover $s$ is arbitrarily small   so
 $$ \lim_{n_k\to\infty}\frac{\partial\zeta_{n_k}(t)}{\partial t_j}=\frac{\partial\zeta(t)}{\partial t_j}.$$

 Again, for any $\delta_1>0$, there exists an $M_1$ such that for $n_k\geq M_1$,
 $$|\frac{\partial\zeta_{n_k}(t)}{\partial t_j}-\frac{\partial\zeta(t)}{\partial t_j}|\leq\delta_1.$$
 Consequently, for $n_k\geq M$ and $t=\epsilon a$
 $$|\frac{1}{s}\left(\zeta_{n_k}(t+s u_j)-\zeta_{n_k}(t)\right)-\frac{\partial\zeta(t)}{\partial t_j}|\leq 2|s|^{1/2}+\delta_1.$$
Letting $s\to 0$ above gives
  \begin{eqnarray}\label{itgoes}
  |\frac{\partial\zeta_{n_k}(t)}{\partial t_j}-\frac{\partial\zeta(t)}{\partial t_j}|\leq \delta_1
  \end{eqnarray}
  for $n_k\geq M_1$.

 Hence,
 writing $x_j=x_j^{+}-x_j^{-}$ where $x_j^{+}=\max(0,x_j)$
 \begin{eqnarray*}
 \frac{\partial\zeta_{n_k}(t)}{\partial t_j}&=&\sum_x x_je^{t\cdot x}\nu_{n_k}^*(x)\\
 &=&\sum_x x_j^{+}e^{t\cdot x}\nu_{n_k}^*(x)-\sum_x x_j^{-}e^{t\cdot x}\nu_{n_k}^*(x).
 \end{eqnarray*}
As $\epsilon \downarrow 0$,
\begin{eqnarray*}
\lim_{\epsilon\to 0}\sum_x x_j^{+}e^{t\cdot x}\nu_{n_k}^*(x)
&=& \lim_{n_k\to\infty}\sum_x x_j^{+}\nu_{n_k}^*(x)
\end{eqnarray*}
by monotone convergence.
 The above limit exists because
 $$\lambda_{n_k}^*\beta_{n_k}\cdot u_j=\sum_x x_j^{+}\nu_{n_k}^*(x)-\sum_x x_j^{-}\nu_{n_k}^*(x).$$
Similarly
 \begin{eqnarray*}
\lim_{\epsilon\to 0}\sum_x x_j^{-}e^{t\cdot x}\nu_{n_k}^*(x)
&=& \lim_{_k\to\infty}\sum_x x_j^{-}\nu_{n_k}^*(x).
\end{eqnarray*}

Take the limit of (\ref{itgoes}) as $\epsilon\to 0$.  Together with the above limits  this gives
 \begin{eqnarray}\label{hopeitisright}
|\frac{\partial\zeta_{n_k}(0)}{\partial t_j}-\lim_{\epsilon\to 0}\frac{\partial\zeta(t)}{\partial t_j}|\leq \delta_1;
 \end{eqnarray}
 for $n_k\geq M$.
 Hence, using the fact that $\delta_1$ is arbitrarily small,
 \begin{eqnarray}\label{looksgood}
 \lim_{n_k\to\infty}\frac{\partial\zeta_{n_k}(0)}{\partial t_j}=\lim_{\epsilon\to 0}\frac{\partial\zeta(t)}{\partial t_j}.
 \end{eqnarray}
  But
 $$\lim_{n_k\to\infty}\frac{\partial\zeta_{n_k}(0)}{\partial t_j}=\lim_{n_k\to\infty}\lambda_{n_k}^*\beta_{n_k}\cdot u_j=\lambda^*\beta\cdot u_j.$$
 Hence the limit $\lim_{\epsilon\to 0}\frac{\partial\zeta(t)}{\partial t_j}$ exists and equals $\lambda^*\beta\cdot u_j$.
 Moreover
\begin{eqnarray*}
\lim_{\epsilon\to 0}\frac{\partial\zeta(t)}{\partial t_j}&=&\lim_{\epsilon\to 0}\sum_x x_je^{t\cdot x}\nu^*(x)\\
&=&\lim_{\epsilon\to 0}\sum_x x^{+}_je^{t\cdot x}\nu^*(x)-\lim_{\epsilon\to 0}\sum_x x_j^{-}e^{t\cdot x}\nu^*(x)\\
&=&\sum_x x_j^{+}\nu^*(x)-\sum_x x_j^{-}\nu^*(x)=\sum_x x_j\nu^*(x)\mbox{ by monotone convergence}\\
&=&\frac{\partial\zeta(0)}{\partial t_j}=\sum_x x\cdot u_j \nu^*(x).
\end{eqnarray*}
 Substituting into  (\ref{looksgood}) gives the result for the $j^{th}$ coordinate.
 The full result follows immediately.
 \end{proof}

$\rho$ is discrete having mass $\rho(x_i)$ on a  set of  vectors $\{x_1,x_2,\ldots \}\in \ZZ^d$.
 Consider the cone $\mathcal{C}_E$ with base $o$ generated by $\{x_i :\nu^*(x_i)>0\}$ inside $\mathcal{C}^E$.
The random walk with transition probabilities $K_{E}(0,v_i)=\nu^*(v_i)$ has state space $S_{E}=\ZZ^d\cap \mathcal{C}_E$.
i.e. for $y\in S_{E}$, $y=\sum_{i}m_iv_i\chi\{\nu^*(v_i)>0\}$ where the $m_i$ are integers.
We can define $g(y)=\prod_{i:\nu^*(v_i)>0}\nu^*(v_i)^{m_i}$ on $E$.
This function is well defined. If $y$ is also represented as $y=\sum_{i}p_iv_i\chi\{\nu^*(v_i)>0\}$ then
\begin{eqnarray*}
e^{\theta_{n_k}\cdot y}&=&e^{\theta_{n_k}\cdot \sum_{i}m_iv_i\chi\{\nu^*(v_i)>0\}}\\
&=&\prod_{i:\nu^*(v_i)>0}(e^{\theta_{n_k}\cdot v_i})^{m_i}\\
&\to&\prod_{i:\nu^*(v_i)>0}\nu^*(v_i)^{m_i}.
\end{eqnarray*}
Similarly, $e^{\theta_{n_k}\cdot y}\to \prod_{i:\nu^*(v_i)>0}\nu^*(v_i)^{p_i}$ so $g(y)$ has the same value whatever the representation.
Note that $g$ is harmonic for $K_{E}$ on $S_{E}$. For $y=\sum_{i}p_iv_i\chi\{\nu^*(v_i)>0\}$
\begin{eqnarray*}
K_{E}g(y)&=&\sum_i K_{E}(y,y+v_i)g(y+v_i)=\sum_i K_{E}(0,v_i)g(y)g(v_i)\\
&=&g(y)\left(\sum_i K_{E}(0,v_i)g(v_i)\right)=g(y).
\end{eqnarray*}
If $z=\sum_{i}u_iv_i\chi\{\nu^*(v_i)>0\}$ so $g(z)=\prod_{i:\nu^*(v_i)>0}\nu^*(v_i)^{u_i}$ then
$y+z=\sum_{i}(m_i+u_i)v_i\chi\{\nu^*(v_i)>0\}$ and
$$g(y+z)=\prod_{i:\nu^*(v_i)>0}\nu^*(v_i)^{m_i+u_i}=\prod_{i:\nu^*(v_i)>0}\nu^*(v_i)^{m_i}\prod_{i:\nu^*(v_i)>0}\nu^*(v_i)^{u_i}=g(y)\cdot g(z).$$
Consequently $g(y)=\exp(\theta^*\cdot y)$ if $y$ is of the form $\sum_{i}m_iv_i\chi\{\nu^*(v_i)>0\}$; i.e. when $y\in S_E$. $g(y)=0$ if $y\in E\setminus S_E$.

 \begin{theorem}\label{ontheedge}
 We assume either the support of $\rho$ is finite or
there is a neighbourhood $\{b:|b-a|\leq \delta\}$ such that $b\cdot x<0$ for all $x\in \mathcal{C}$. Then
  $$\nu_{n}^*\Longrightarrow \nu_E^*\mbox{ and }\overline{K}^{\theta^*_{n}}\to \overline{K}^{\theta_E^*}.$$
\end{theorem}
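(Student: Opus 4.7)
The plan is to identify any weak subsequential limit of $\nu_n^{*}$ with $\nu_E^{*}$ via the uniqueness in Corollary \ref{dotheedge}, and then deduce convergence of the whole sequence and of the kernels.

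First, since $\sum_x e^{\theta_n^{*}\cdot x}\rho(x)=1$, the numbers $e^{\theta_n^{*}\cdot x}$ are uniformly bounded by $1/\rho(x)$, so by a diagonal argument over the countable support of $\rho$ we may extract a subsequence $\{n_k\}$ along which $e^{\theta_{n_k}^{*}\cdot x}\to g(x)$ for every $x\in S_\rho$. By Fatou's lemma $\sum_x g(x)\rho(x)\le 1$, and the analysis immediately preceding the theorem shows that $g(x)=0$ for $x\in S_\rho\setminus S_\rho^{E}$ (from $n_E\cdot\lambda_{n_k}^{*}\beta_{n_k}\to 0$ together with uniform boundedness of $\lambda_{n_k}^{*}$ given by Proposition \ref{uniformlybounded}). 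Define $\nu^{*}(x)=g(x)\rho(x)$ and appeal to the proposition preceding the theorem asserting that $\nu^{*}$ is a probability measure, and to Proposition \ref{meansconverge} (whose hypotheses are exactly the two alternatives we are assuming) giving $\sum_x x\,\nu^{*}(x)=\lambda_E^{*}\beta$.

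Next, the multiplicative identification done just before the theorem shows that $g$ satisfies $g(y+z)=g(y)g(z)$ on the lattice generated by $\{v_i:\nu^{*}(v_i)>0\}$, and therefore $g(y)=\exp(\theta^{*}\cdot y)$ on that lattice for some $\theta^{*}$, with $g\equiv 0$ off $\mathcal{C}_E$. Consequently
\[
\nu^{*}(x)=e^{\theta^{*}\cdot x}\rho^{E}(x),\qquad \mgf_E(\theta^{*},0)=\sum_x e^{\theta^{*}\cdot x}\rho^{E}(x)=1,\qquad \sum_x x\,\nu^{*}(x)=\lambda_E^{*}\beta.
\]
These are precisely the defining equations for the twist on the face $\mathcal{C}^{E}$ in Corollary \ref{dotheedge}, and the uniqueness statement there forces $\theta^{*}=\theta_E^{*}$ and hence $\nu^{*}=\nu_E^{*}$.

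Because every subsequence admits a further subsequence converging pointwise to the same limit $\nu_E^{*}$, the full sequence $\nu_n^{*}\Longrightarrow\nu_E^{*}$ (weak convergence on the discrete lattice is just pointwise convergence of masses, which combined with $\sum_x\nu_n^{*}(x)=\sum_x\nu_E^{*}(x)=1$ yields tightness automatically). For the kernels, note that
\[
\overline{K}^{\theta_n^{*}}(x,x+v)=\rho(v)\,e^{\theta_n^{*}\cdot v}\longrightarrow \rho(v)g(v)=\nu_E^{*}(v)=\overline{K}^{\theta_E^{*}}(x,x+v)
\]
for every jump $v$, giving $\overline{K}^{\theta_n^{*}}\to\overline{K}^{\theta_E^{*}}$ pointwise, which is the required mode of convergence of transition kernels. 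The main obstacle is already absorbed into Proposition \ref{meansconverge} (identifying the limiting mean) and into the multiplicative identification preceding the theorem; the present theorem is essentially the clean packaging of those ingredients through the uniqueness of the edge twist.
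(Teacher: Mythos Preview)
Your proposal is correct and follows essentially the same approach as the paper: extract a subsequential limit $\nu^{*}=g\rho$ supported on $S_\rho^E$, use Proposition~\ref{meansconverge} (under the stated alternative hypotheses) to identify its mean as $\lambda_E^{*}\beta$, recognize $g$ as an exponential on the edge lattice, and then invoke the uniqueness of the edge twist from Corollary~\ref{dotheedge} to force $\theta^{*}=\theta_E^{*}$ and hence full-sequence convergence. Your write-up is in fact more explicit than the paper's about the multiplicative structure of $g$ and the passage to kernel convergence, but the skeleton is identical.
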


\begin{proof}

By Proposition \ref{meansconverge},
$$\lambda_n^*\beta_n=\sum_x xe^{x\cdot\theta_n^*}\rho(x)\to \sum_x xe^{x\cdot\theta^*}\rho(x).$$
But $\lambda_n^*\beta_n\to \lambda_E^*\beta$
so $\sum_x x\nu^*(x)=\lambda_E^*\beta$; i.e.
$$\sum_x xe^{\theta^*\cdot x}\rho^E(x)=\lambda_E^*\beta.$$
But $\theta^*_E$ is the unique twist such that
$$\sum_{x\in S_{\rho}^E}  x e^{\theta_E^*\cdot x}\rho^E(x)=\lambda^*_E\beta.$$
Hence $\theta^*=\theta^*_E$ and  $\nu^*=\nu_E^*$ and this holds for all possible subsequences $n_k$.
Hence $\nu^*_n\Longrightarrow \nu_E^*$  and equivalently $\overline{K}^{\theta^*_n}\to \overline{K}^{\theta_E^*}$.
\end{proof}

We do need the hypothesis that $\rho(S^E)>0$. Consider $\rho$ on $\ZZ^2$ such that
$$\rho(0,0)=1/2\mbox{ and }\rho(n,1)=\frac{1}{2}e^{-a}\frac{a^n}{n!}\mbox{ for } n\geq 0.$$
Let $\mathcal{C}^E$ be the $x$-axis.
$$\mgf(\theta_1,\theta_2)=1/2+(1/2)\exp(a e^{\theta_1}-a+\theta_2).$$
The egg $\mgf(\theta_1,\theta_2)=1$ is $a e^{\theta_1}-a+\theta_2=0$ so $\theta_2=a(1-e^{\theta_1})$ for $(\theta_1,\theta_2)$ on the egg
 $$\nabla\mgf(\theta_1,\theta_2)=\frac{1}{2}(a e^{\theta_1},a)\mbox{ for }(\theta_1,\theta_2).$$
Hence, as $\theta_1\to\infty$ and $\theta_2=a(1-e^{\theta_1})\to-\infty$ through a sequence indexed by $n$, $\beta_n=\nabla\mgf(\theta_1,\theta_2)$ points more and more in direction $\beta=(1,0)$
but with a length tending to infinity.  There is no limiting measure $e^{\theta^*_E\cdot x}\rho_E(x)$ because the measures
$e^{\theta_n^*\cdot x}\rho_E(x)$ are not tight. There is no contradiction with the above result because $\rho^E=0$.

\begin{remark}
$\overline{G}(x,y)=\overline{G}^{\theta^*_E}(x,d)\exp(-\theta^*_E\cdot (y-x))$ and $\overline{G}(0,y)=\overline{G}^{\theta^*_E}(0,y)\exp(-\theta^*_E\cdot y)$
so
$$k(x,y)=\frac{\overline{G}(x,y)}{\overline{G}(0,y)}=\frac{\overline{G}^{\theta^*_E}(x,y)}{\overline{G}^{\theta^*_E}(0,y)}e^{\theta^*_E\cdot x}
=k^{\theta^*_E}(x,y_n)e^{\theta^*_E\cdot x}.$$
Pick a sequence of $y_{\ell}$ such that $|y_n|\to\infty$ in direction $\beta$. Corollary 1.3 in \cite{Ney-Spitzer} shows
$k^{\theta^*_E}(x,y_{\ell})\to 1$ so $k(x,y_{\ell})\to e^{\theta^*_E\cdot x}$. This holds even when $K$ is not irreducible.
Hence the direction $\beta=\lambda^*\nabla \Lambda(\theta^*_E)$ corresponds to a point on the Martin boundary associated with the
extremal harmonic function $g(x)=e^{\theta^*_E\cdot x}$ for $x\in E$ and $g(x)=0$ for $x\notin E$.
\end{remark}

\section{Application of large deviation theory to our polling model}

Suppose we want to investigate a ray case on the first sheet. In fact we may wish to investigate the probability the queue size reaches $\ell$ by running away  in some direction $\beta$.
We calculate $r^*(t)$ in Theorem 6.15 in \cite{weiss} for this example.
 Extend the chain  in the interior of sheet~1 to the whole plane $\ZZ^2$ giving the free random walk
with increments having log Laplace transform
$$\Lambda(\theta,1)=\log(\mgf_1(\theta,1))=\log(\lambda_1 e^{\theta_1}+\lambda_2 e^{\lambda_2}+\mu e^{-\theta_1})\mbox{ where }\theta=(\theta_1,\theta_2).$$
Define the rate function
$$\Lambda^*(\beta,1)=\sup_{\theta}(\theta\cdot \beta- \Lambda(\theta,1)).$$
The action associated with a smooth path $\overrightarrow{r}(t) $ from $(0,0,1)$ to $(x,y,1)$ where $x+y=\ell$ and $x,y\geq 0$ is given by
$\int_0^T\Lambda^*(\frac{d\overrightarrow{r}(t)}{dt})dt$ where $\overrightarrow{r}(T)=(x,y,1)$.
By the calculus of variation and the fact that the rate function is homogeneous over the plane, the minimal action is given by a straight line $\overrightarrow{r}(t)=t\beta$
where $T(\beta_1+\beta_2)=\ell$. Consequently the minimum action is of the form
$$T\Lambda^*(\beta,1)=\frac{\ell}{(\beta_1+\beta_2)}\Lambda^*(\beta,1).$$

To find the optimal direction  note that $\Lambda(\theta,1)$ is smooth so
\begin{eqnarray}\label{bestdirection}
\beta_1=\frac{\partial \Lambda(\theta,1) }{\partial \theta_1}\mbox{ and }\beta_2=\frac{\partial \Lambda(\theta,1) }{\partial \theta_2}
\end{eqnarray}
provides a differentiable map between $\beta$ and the $\theta$ which gives the supremum of the rate function.
Now note that for this choice of $\beta$ and  $\theta$, the following dual relationships hold:
$$\theta_1=\frac{\partial \Lambda^*(\beta,1) }{\partial \beta_1}\mbox{ and }\theta_2=\frac{\partial \Lambda^*(\beta,1) }{\partial \beta_2}$$

At the optimal $\beta$,
$$\frac{\partial  }{\partial \beta_1} \left(\frac{\Lambda^*(\beta,1)}{\beta_1+\beta_2}\right) =\frac{1}{(\beta_1+\beta_2)^2}(\theta_1(\beta_1+\beta_2)- \Lambda^*(\beta,1))=0$$ and
$$ \frac{\partial }{\partial \beta_2} \left(\frac{\Lambda^*(\beta,1)}{\beta_1+\beta_2}\right) =\frac{1}{(\beta_1+\beta_2)^2}(\theta_2(\beta_1+\beta_2)- \Lambda^*(\beta,1))=0.$$
 Hence
 \begin{eqnarray}\label{opt}
 \theta_1(\beta_1+\beta_2)=\Lambda^*(\beta,1) \mbox{ and }\theta_2(\beta_1+\beta_2)=\Lambda^*_1(\beta,1).
 \end{eqnarray}
Hence   $\theta_1=\theta_2=\theta^{*}$ and
 $$\theta^*(\beta_1+\beta_2)=\Lambda^*(\beta,1)=(\theta^*(\beta_1+\beta_2)-\log(\Lambda((\theta^*,\theta^*),1);$$
i.e. $\log(\Lambda((\theta^*,\theta^*),1)=0$ so $\Lambda((\theta^*,\theta^*),1)=1$.
However  the only nonzero solutions to $\Lambda((\theta^*,\theta^*),1)=\lambda_1e^{\theta^*}+\lambda_2e^{\theta^*}+\mu e^{-\theta^*}=1$ are $1$ and $\\rho^{-1}$.
The velocity along the least action path $\overrightarrow{r}(t)=t\beta$ is given by \ref{bestdirection} but $\theta^*=1$ then yields $\beta_1=\lambda_1-\mu<0$.
This gives a line $\overrightarrow{r}(t)$ with slope $\lambda_2/(\lambda_1-\mu)<-1$ which doesn't hit $x+y=\ell$ for $t\geq 0$. This leave $\theta^*=\rho^{-1}$ in which case
$$\beta_1=\frac{\partial \Lambda(\theta,1) }{\partial \theta_1}|_{\theta=(\rho^{-1},\rho^{-1})}=\lambda_1\rho^{-1}-\mu\rho$$
and
$$\beta_2=\frac{\partial \Lambda(\theta,1) }{\partial \theta_2}|_{\theta=(\rho^{-1},\rho^{-1})}=\lambda_2\rho^{-1}.$$
Hence the least action path to the boundary $x+y=\ell$ is $\overrightarrow{r}^*(t)=(\rho^{-1}t,\rho^{-1}t)$ which is the fluid limit of our twisted path calculated before.

The above large deviation calculation was valid when sheet~1 is extended to the whole plane $\ZZ^2$. This means that even in a ray case when the least action path does lie entirely in sheet~1
we still have to verify that the action calculated above
is smaller than the action along paths reaching $x+y=\ell$ that spiral onto sheet~2 or spiral multiple times onto sheets one and two
or those that immediately start on sheet~2. We also have to consider the action along paths that form bridges
along the $y$-axis on sheet~1 or on the $x$-axis on sheet~2. The action for paths along boundaries are discussed in \cite{weiss} or \cite{FM}. While it's true the action is constant
in any of the above directions it still remains to show that the action along such a sequence of line segments leading to the boundary is greater than  the least action path that does lie entirely in sheet~1.

Contrast this with the proof of Theorem \ref{one}. There all we need to check  is that $\sum_x\rho^{-x}\pi(x,0,1)<\infty$ in order to restrict our calculations to trajectories remaining on sheet~1.
If there is a ray on sheet~1 we showed $\sum_x\rho^{-x}\pi(x,0,1)<\infty$ so  Theorems \ref{one}  holds. On the other hand, in the spiral-spiral case we have seen, at least empirically, that by (\ref{approx1}),
$\pi(\ell,0,1)\sim \rho^{\ell}\frac{\alpha(\ell)}{\ell}$  where
$\alpha(\ell)\sim C_1( 1- \frac{(a+c-b)}{c})$. It follows, at least empirically, that in the spiral-spiral case, $\sum_{\ell}\rho^{-\ell}\pi(\ell,0,1)=\infty$. Consequently the asymptotics in Theorem \ref{one}
can't hold and that's what we see by simulation.

\end{appendix}

\noindent {\bf Acknowledgments}

The authors wish to thank Professor Doug Down from the Department of Computing and Software at McMaster University for many conversations about polling models
when this work was started more than a decade ago. DMcD thanks Fran\c{c}ois Baccelli for the opportunity to present this work in the Inria Dyogene Seminar. The questions and references
helped to improve the paper. In this regard, thanks also to Sergey Foss from the School of Mathematics and Computer Sciences at the Heriot Watt University for his suggestions and references.

\end{document}